\documentclass[11pt]{article}
\usepackage{amsmath,amssymb,amsthm,bm,graphicx,color,epsfig,cases,enumerate,caption,subcaption}

\usepackage{algpseudocode}
\usepackage{algorithm}
\usepackage{algorithmicx}


\addtolength{\textwidth}{1in}
\addtolength{\oddsidemargin}{-0.5in}
\addtolength{\textheight}{1.6in}
\addtolength{\topmargin}{-0.8in}

\newtheorem{theorem}{Theorem}[section]
\newtheorem{lemma}[theorem]{Lemma}

\newtheorem{proposition}[theorem]{Proposition}
\newtheorem{definition}[theorem]{Definition}

\newtheorem{algo}[theorem]{Algorithm}

\newcommand{\R}{\mathbb{R}}
\newcommand{\Z}{\mathbb{Z}}

\newcommand{\eps}{\varepsilon}

\newcommand{\grad}{\nabla}

\newcommand{\SNR}{{\it SNR}}

\newcommand{\VAR}{{\it Var}}

\begin{document}

\title{Synchrosqueezed Curvelet Transform\\
 for 2D Mode Decomposition}
\author{Haizhao Yang$^{\dagger}$ and Lexing Ying$^{\sharp}$\\
  \vspace{0.1in}\\
  $\dagger$ Department of Mathematics, Stanford University\\
  $\sharp$ Department of Mathematics and ICME, Stanford University
}

\date{October 2013}
\maketitle

\begin{abstract}
This paper introduces the synchrosqueezed curvelet transform as an optimal tool for 2D mode decomposition of wavefronts or banded wave-like components. The synchrosqueezed curvelet transform consists of a generalized curvelet transform with application dependent geometric scaling parameters, and a synchrosqueezing technique for a sharpened phase space representation. In the case of a superposition of banded wave-like components with well-separated wave-vectors, it is proved that the synchrosqueezed curvelet transform is capable of recognizing each component and precisely estimating local wave-vectors. A discrete analogue of the continuous transform and several clustering models for decomposition are proposed in detail. Some numerical examples with synthetic and real data are provided to demonstrate the above properties of the proposed transform.

\end{abstract}

{\bf Keywords.} Curvelet transform, synchrosqueezing, banded wave-like components,
local wave-vector, phase space representation.

{\bf AMS subject classifications: 42A99 and 65T99.}

\section{Introduction}
\label{sec:intro}

In various applications  (e.g., medicine \cite{medicine,medicine2} and engineering \cite{Eng1,Eng2}), one is faced with a signal which is a superposition of several components (perhaps nonlinear and non-stationary). The frequency or wave-vector of each component is localized in the time-frequency or phase space representation. A natural question would be whether it is possible to set them apart according to their localized representation and estimate their local frequencies or wave-vectors. Classical time-frequency or phase space analysis provides several powerful tools for representing and analyzing complex signals. All of these tools essentially fall into two categories: linear or quadratic. As discussed in \cite{Daubechies2011}, linear methods have simple and efficient algorithms for forward and inverse transforms, but the resolution is unavoidably limited by the Heisenberg uncertainty principle. Although quadratic methods provide high resolution, the corresponding reconstruction methods are less straightforward and significantly more costly. Furthermore, non-physical interference between components is more pronounced.

By introducing the synchrosqueezing technique, Daubechies et al proposed the synchrosqueezed wavelet transform in \cite{Daubechies1996} and demonstrated that, an important class of signals under the assumption of well-separated frequencies, could be precisely decomposed. Synchrosqueezing, the key idea, is a reallocation method \cite{Auger1995,Chassande-Mottin2003,Chassande-Mottin1997,Daubechies1996} aiming at a sharpened time-frequency representation by reassigning values of the original representation. Though it has been shown to provide good results for 1D signals, even with a substantial amount of noise, in higher dimensional space the application of the synchrosqueezed wavelet transform is limited. It cannot distinguish two components sharing the same wave-number but having different wave-vectors, because of the isotropic character of the high dimensional wavelet transform. In fact, this is a common phenomenon in many applications of high frequency wave propagation. To specify this problem, let us consider a simple superposition of two plane waves $e^{2\pi ip\cdot x}$ and $e^{2\pi iq\cdot x}$ with the same wave-number ($|p|=|q|$) but different wave-vectors ($p \not= q$). In the Fourier domain, the gray region in Figure \ref{fig:freqpart} (left) shows the support of one continuous wavelet. The wavelet cannot distinguish these two plane waves in the sense that the gray region has to cover two dots $p$ and $q$ simultaneously, or has to exclude them simultaneously.

To overcome this inherent limitation of the synchrosqueezed wavelet transform in high dimensional space, the synchrosqueezed wave packet transform (SSWPT) was developed in \cite{SSWPT}, inspired by the localized support of wave packets in the Fourier domain. The finer supports result in better resolution for wave-number separation and, more importantly, the anisotropic supports contribute to the angular separation of wave-vectors. As shown in Figure \ref{fig:freqpart} (middle), in the Fourier domain, the supports of $e^{2\pi ip\cdot x}$ and $e^{2\pi iq\cdot x}$ are in the supports of two different wave packets, as long as $p$ and $q$ are well-separated. \cite{SSWPT} proved that SSWPT could identify different nonlinear and non-stationary high frequency wave-like components with different wave-vectors in high dimensional space in a general case, even with severe noise. It has also been shown that SSWPT can capture the edges of incomplete components, so that it could identify the discontinuity of wave propagation and extract connected continuous components. 

\begin{figure}[ht!]
  \begin{center}
    \begin{tabular}{ccc}
      \includegraphics[height=1.6in]{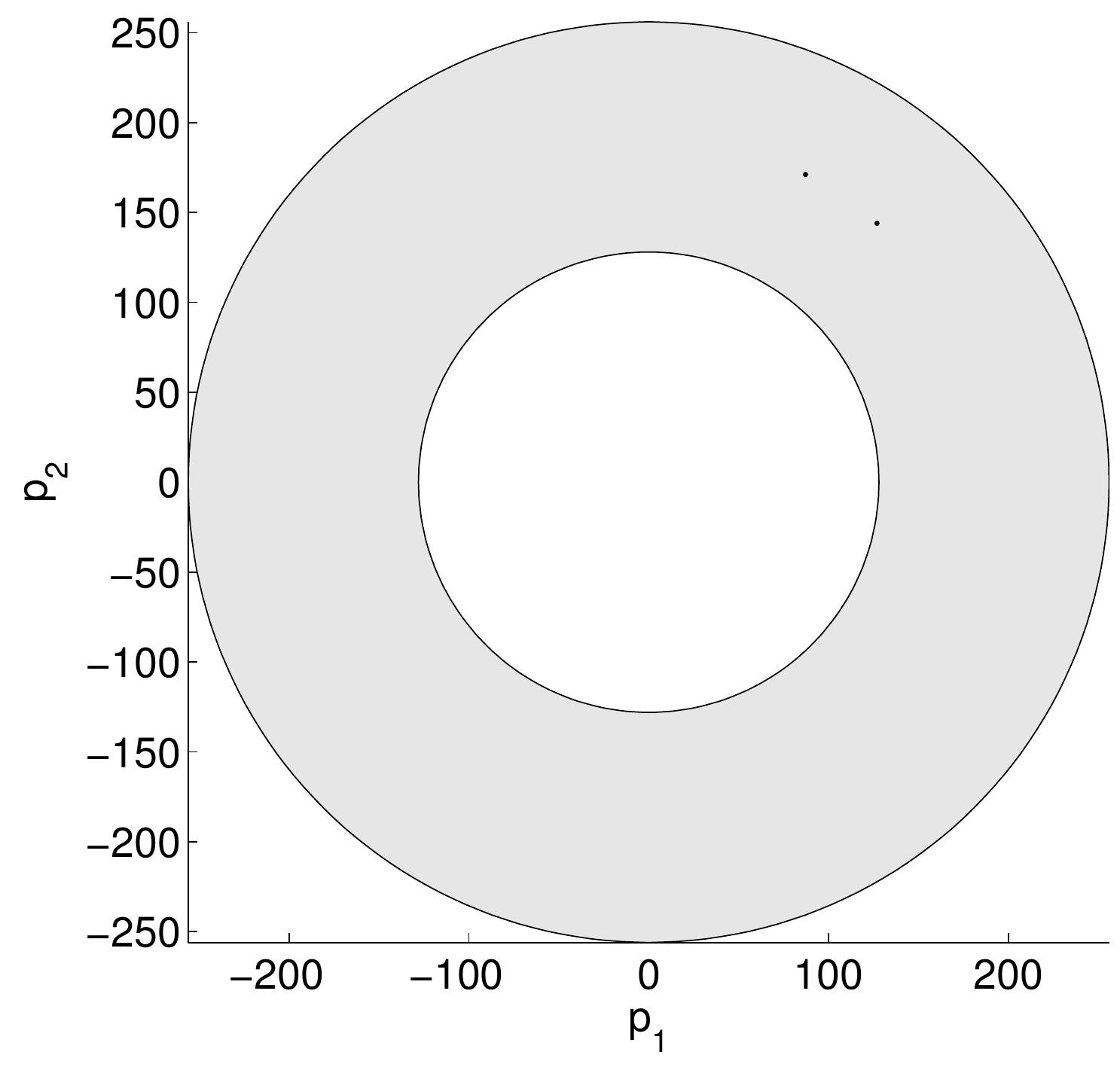} &  \includegraphics[height=1.6in]{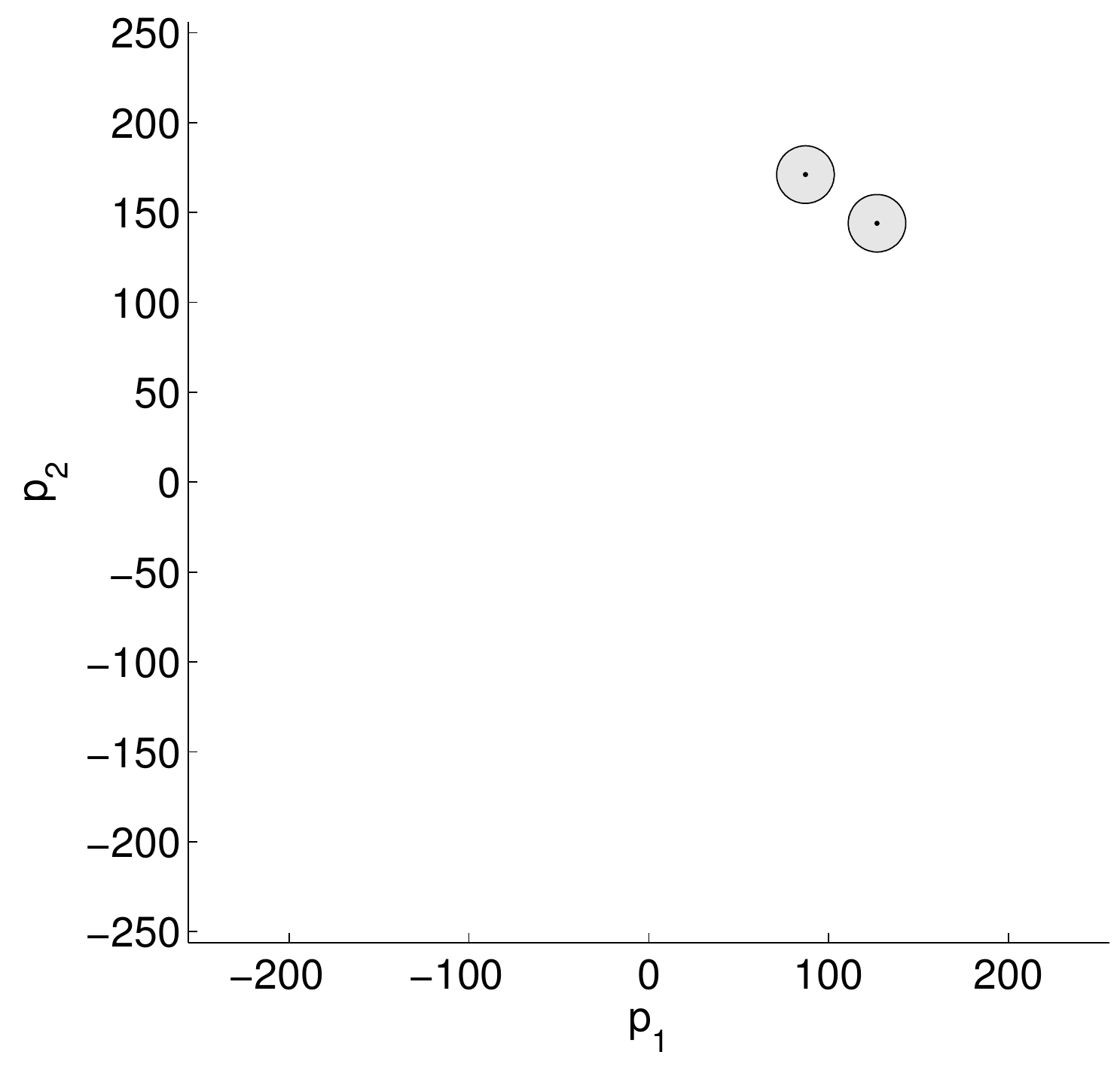} & \includegraphics[height=1.6in]{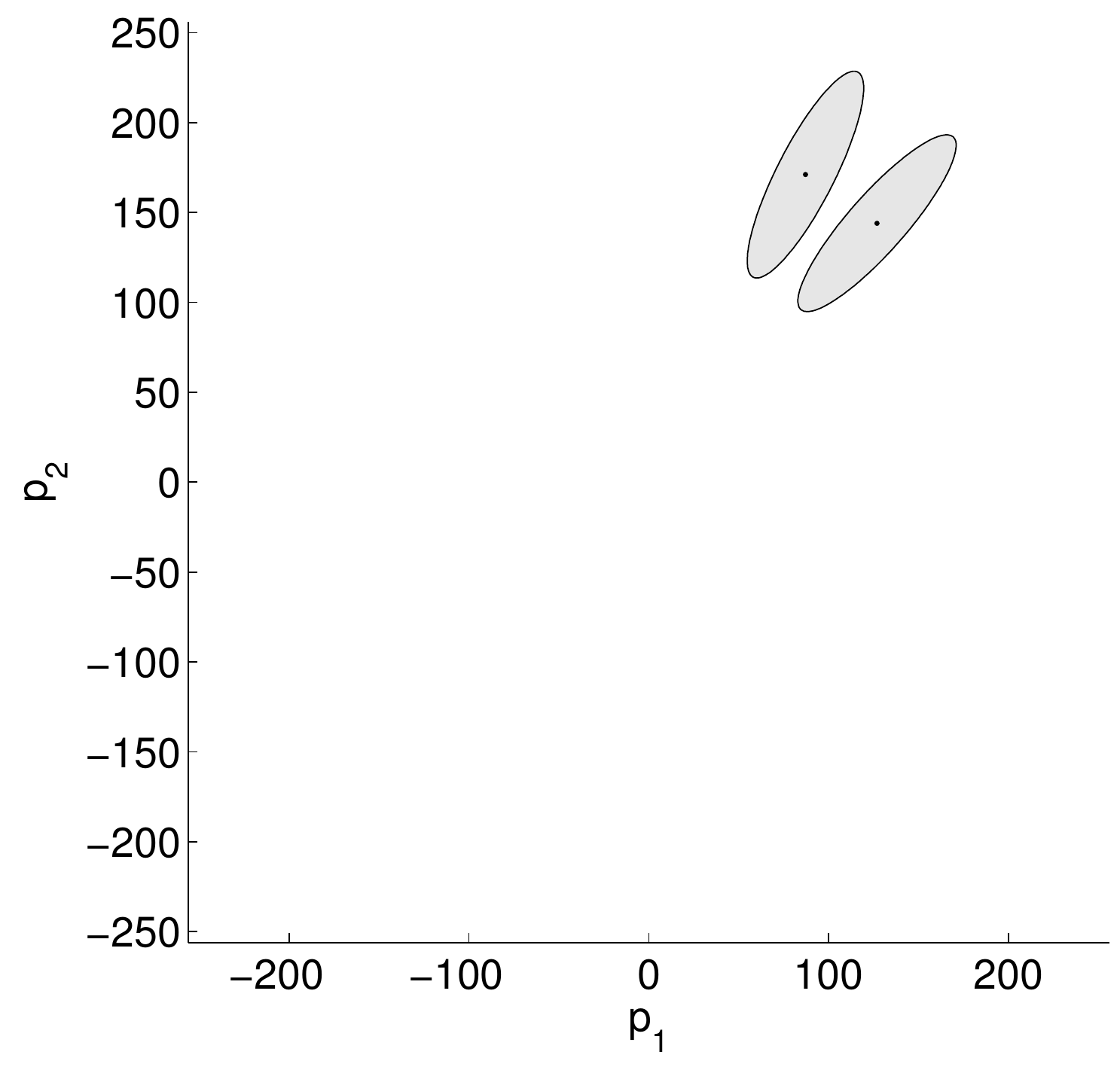}
    \end{tabular}
  \end{center}
  \caption{Comparison of localized supports of continuous wavelets
    (left), wave packets (middle) and curvelets (right) in the Fourier
    domain. Two dots in each plot show the support of the Fourier transforms of the
    superposition of two plane waves $e^{2\pi ip\cdot x}$ and $e^{2\pi iq\cdot x}$ with the same     wave-number ($|p|=|q|$) but different wave-vectors ($p \not= q$).}
  \label{fig:freqpart}
\end{figure}

When one separates overlapping wavefronts or banded wave-like components, the boundary of these components gives rise to many nonzero coefficients of wave packet transform, which results in unexpected interferential synchrosqueezed energy distribution (see Figure \ref{fig:band_wp} middle). This would dramatically reduce the accuracy of local wave-vector estimation, because the locations of nonzero energy provide estimation of local wave-vectors. As shown in Figure \ref{fig:band_wp} (right), there exists misleading local wave-vector estimates at the location where the signal is negligible. Even if at the location where the signal is relevant, the relative error is still unacceptable. To solve this problem, an empirical idea is that, good basis elements in the synchrosqueezed transform should look like the components, i.e., they should appear in a needle-like shape. An optimal solution is curvelets. The curvelet transform is anisotropic (as shown in Figure \ref{fig:freqpart} right), and is designed for optimally representing curved edges \cite{Candes2002,Candes2004} and banded wavefronts \cite{Candes2006}. This motivates the design of the synchrosqueezed curvelet transform (SSCT) as an optimal tool to estimate local wave-vectors of wavefronts or banded wave-like components in this paper. The estimate of local wave-vectors provided by SSCT is much better than that by SSWPT as shown in Figure \ref{fig:band_wp}.

\begin{figure}[ht!]
  \begin{center}
    \begin{tabular}{cc}
      \includegraphics[height=2.4in]{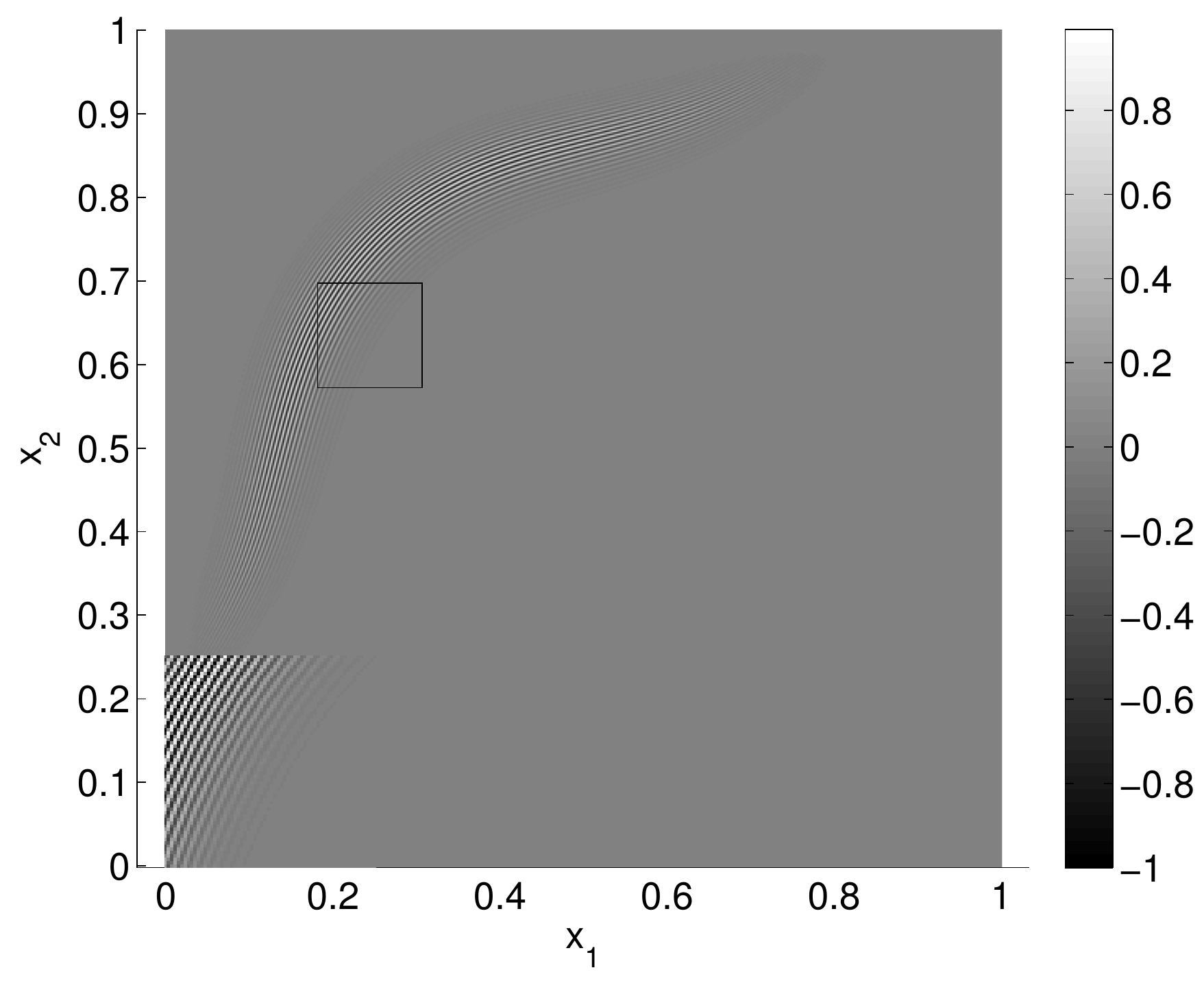} &  \includegraphics[height=2.4in]{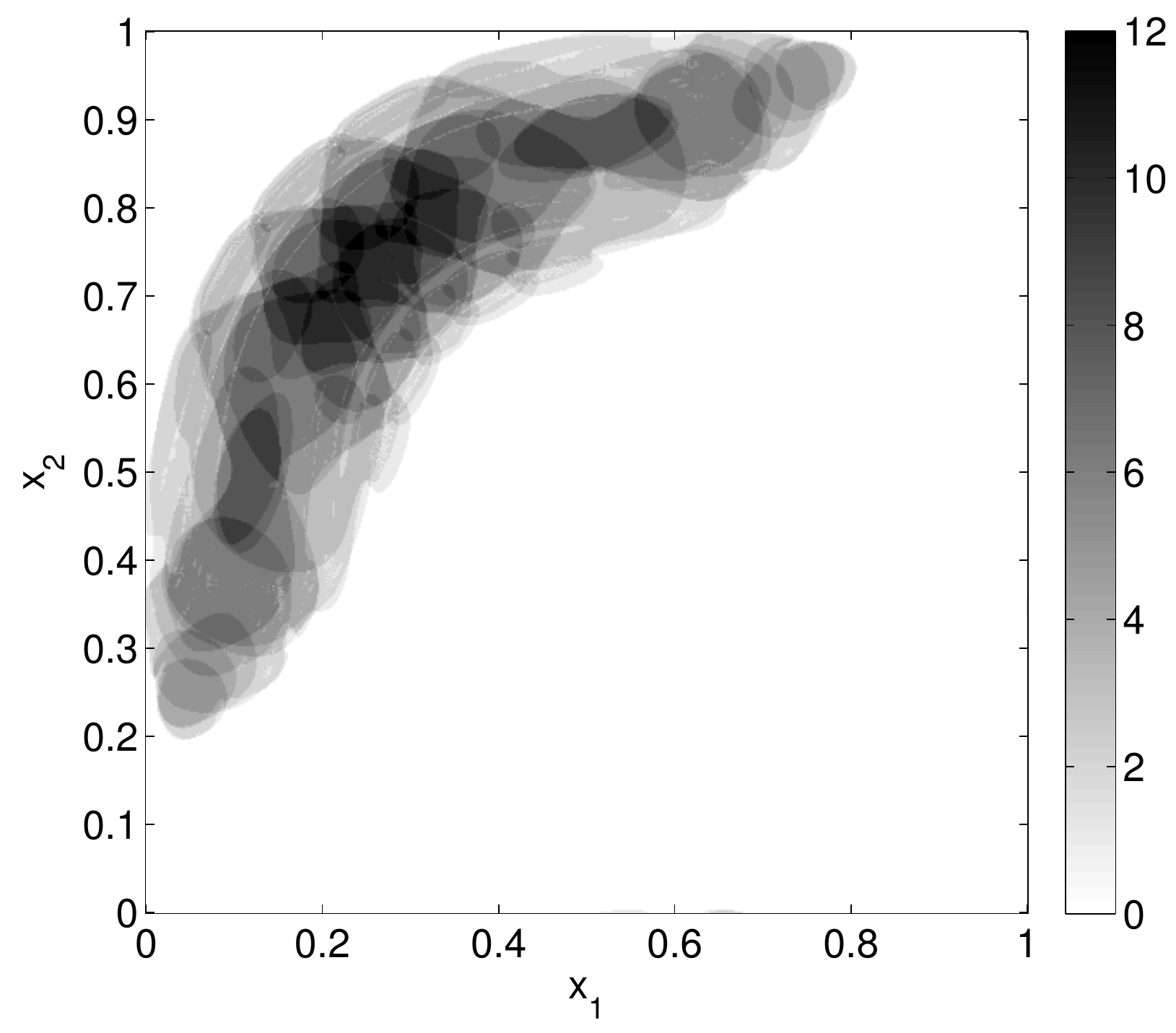}\\
      \includegraphics[height=2.4in]{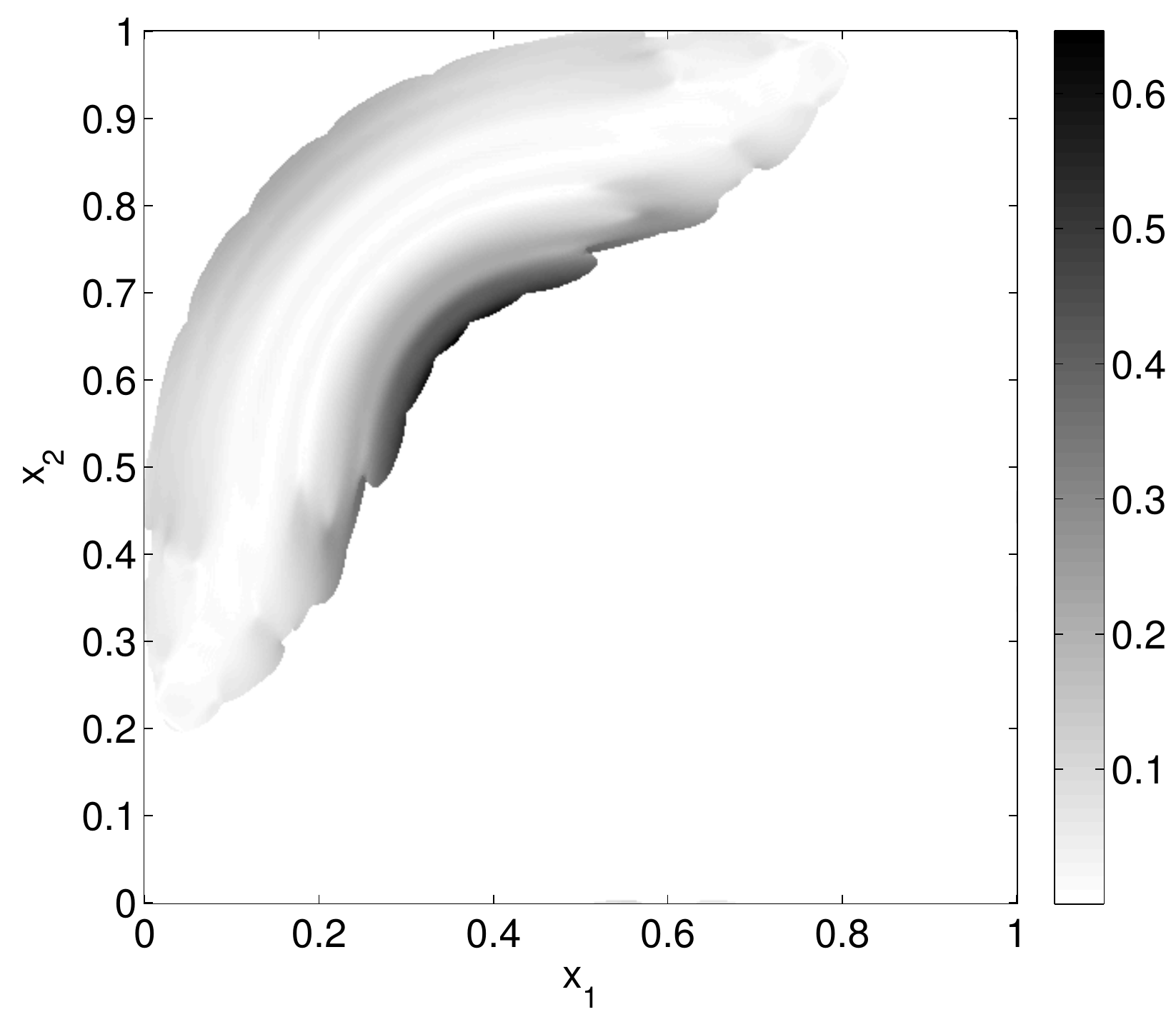} & \includegraphics[height=2.4in]{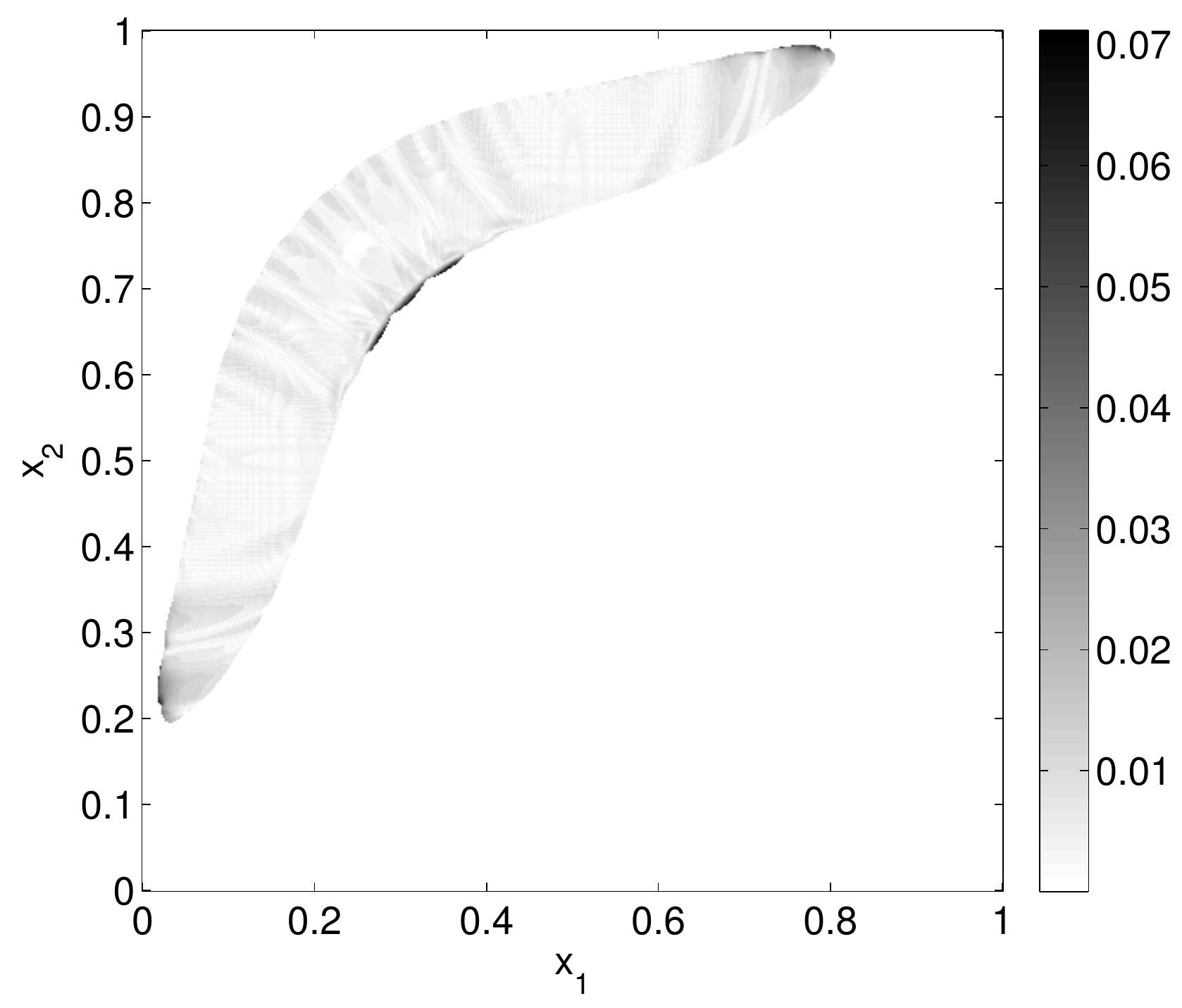}
    \end{tabular}
  \end{center}
  \caption{Top-left: A banded deformed plane wave, $f(x)=e^{-\frac{(\phi(x)-0.7)^2}{\sigma^2}}e^{2\pi i N\phi(x)}$, where $\sigma=\frac{4}{135}$, $N=135$ and $\phi(x)=x_1+(1-x_2)+0.1\sin(2\pi x_1)+0.1\sin(2\pi (1-x_2))$. Top-right: Number of nonzero discrete synchrosqueezed energy of SSWPT at each grid point of space domain. Bottom-left: Relative error between the mean local wave-vector estimate (defined in \cite{SSWPT}) and the exact local wave-vector using SSWPT. Bottom-right: Relative error between the mean local wave-vector estimate and the exact local wave-vector using SSCT.}
  \label{fig:band_wp}
\end{figure}

\subsection{Synchrosqueezed curvelet transform (SSCT)}
\label{sub:SSCT}

Following is a brief introduction to the general curvelet transform with a radial scaling parameter $t<1$ and an angular scaling parameter $s\in(\frac{1}{2},t)$. Similar to the discussion in \cite{SSWPT}, it is crucial to assume $\frac{1}{2}<s<t<1$, so as to obtain accurate estimates of local wave-vectors for reasonable large wavenumbers. It is proved in the next section, $s<t$ guarantees precise estimates in the case of banded wave-like components. Here are some notations for the general curvelet transform. 
\begin{enumerate}
\item The scaling matrix
\[A_a=\left( \begin{array}{cc}
a^t&0\\
0&a^s 
\end{array}\right),\]
where $a$ is the distance from the center of one curvelet to the origin of Fourier domain.
\item The rotation angle $\theta$ and rotation matrix 
\[R_\theta=\left( \begin{array}{cc}
\cos\theta & -\sin\theta\\
\sin\theta& \cos\theta\end{array}\right).\]
\item The unit vector $e_\theta=(\cos \theta,\sin \theta)^T$ of rotation angle $\theta$.
\item $\theta_\alpha$ represents the argument of given vector $\alpha$.
\item $w(x)$ of $x\in \R^2$ denotes the mother curvelet, which is in the Schwartz class and has a non-negative, radial, real-valued, smooth Fourier transform
$\widehat{w}(\xi)$ with support equal to the unit ball $B_1(0)$ in the Fourier
domain. The mother curvelet is required to obey the admissibility condition: $\exists 0<c_1<c_2<\infty$ such that
\[
c_1\leq \int_0^{2\pi} \int_1^\infty a^{-(t+s)} |\widehat{w}(A^{-1}_a R^{-1}_\theta (\xi-a\cdot e_\theta))|^2 adad\theta \leq c_2
\]
for any $|\xi|\geq 1$.
\end{enumerate}
With the notations above, it is ready to define a family of curvelets through scaling, modulation, and translation as follows, controlled by the geometric parameter $s$ and $t$.
\begin{definition}
  \label{def:GC2D}
For $\frac{1}{2}<s<t<1$, define $\widehat{w_{a\theta b}}(\xi)=\widehat{w}(A^{-1}_a R^{-1}_\theta (\xi-a\cdot e_\theta))e^{-2\pi ib\cdot\xi}a^{-\frac{t+s}{2}}$ as a general curvelet in the Fourier domain. Equivalently, in the space domain, the corresponding general curvelet is
\begin{eqnarray*}
w_{a\theta b}(x)&=&\int_{R^2}\widehat{w}(A^{-1}_a R^{-1}_\theta (\xi-a\cdot e_\theta)) e^{-2\pi ib\cdot \xi}e^{2\pi i\xi\cdot x}a^{-\frac{t+s}{2}}d\xi\nonumber\\
&=&a^{\frac{t+s}{2}}\int_{R^2}\widehat{w}(y)e^{-2\pi ib\cdot(R_\theta A_a y+a\cdot e_\theta)}e^{2\pi ix\cdot(R_\theta A_a y+a\cdot e_\theta)}dy\nonumber\\
&=&a^{\frac{t+s}{2}}e^{2\pi ia(x-b)\cdot e_\theta}w(A_a R^{-1}_\theta(x-b)).
\end{eqnarray*}
In such a way, a family of curvelets $\{w_{a\theta b}(x),a\in [1,\infty),\theta\in[0,2\pi),b\in \R^2\}$ is constructed. 
\end{definition}
By definition, the Fourier transform $\widehat{w_{a\theta b}}(\xi)$ is supported in an ellipse $\{x:|A_a R^{-1}_\theta(x-b)|\leq 1\}$ centered at $a\cdot e_\theta$ with a major radius $a^t$ and a minor radius $a^s$. It is natural to require $a\geq 1$ in order to keep the consideration regarding the shape of curvelets valid. Meanwhile, $w_{a\theta b}(x)$ is centered in space at $b$ with an essential support of length $O(a^{-s})$ and width $O(a^{-t})$. By this appropriate construction, each curvelet is scaled to have the same $L^2$ norm with the mother curvelet $w(x)$. Notice that if $s=\frac{1}{2}$ and $t=1$, these functions would be
qualitatively similar to standard 2D curvelets. When $s=t$, these functions would become general wave packets in \cite{SSWPT}. As $s=t$ approaching $1$ or $\frac{1}{2}$, they are getting close to wavelets or wave atoms \cite{Demanet2007}, respectively.

Similar to the classical curvelet transform, the general curvelet transform is defined to be the inner product of a given signal and each curvelet as follows.
\begin{definition}
  \label{def:GCT}
The general curvelet transform of a function $f(x)$ is a function 
\begin{eqnarray*}
W_f(a,\theta,b)&=&\langle w_{a\theta b},f\rangle = \int_{\R^2}\overline{w_{a\theta b}(x)}f(x)dx\nonumber\\
&=&\langle\widehat{w_{a\theta b}},\widehat{f}\rangle=\int_{\R^2}\overline{\widehat{w_{a\theta b}(\xi)}}\widehat{f}(\xi)d\xi\nonumber
\end{eqnarray*}
for $a\in [1,\infty)$, $\theta\in[0,2\pi)$, $b\in \R^2$.
\end{definition}
If the Fourier transform $\widehat{f}(\xi)$ vanishes for $|\xi|< 1$, one can check the following $L^2$ norms equivalence up to a uniform constant factor following the proof of Theorem $1$ in \cite{CandesII}, i.e.,
\[
c_1 \int |f(x)|^2 dx\leq\int|W_f(a,\theta,b)|^2 ada d\theta db \leq c_2 \int |f(x)|^2 dx.
\]

Below is a simple example to show how the synchrosqueezing technique  estimates local wave-vectors. Let us consider a plane wave function
%
\[
f(x)= \alpha e^{2\pi i N \beta \cdot x},
\]
where $\alpha$ and $\beta$ are nonzero constants of order $O(1)$ and
$N$ is a sufficiently large constant. The general curvelet transform of $f(x)$ is
\begin{eqnarray*}
  W_f(a,\theta ,b)
  &=&\int_{\R^2} \alpha e^{2\pi i N\beta \cdot x}a^{\frac{s+t}{2}}w(A_a R^{-1}_\theta (x-b))e^{-2\pi ia(x-b)\cdot e_\theta}dx\nonumber\\
  &=&a^{-\frac{s+t}{2}} \alpha \int_{\R^2}e^{2\pi i N\beta \cdot (b+R_\theta A^{-1}_a y)}w(y)e^{-2\pi i a^{1-t}y_1}dy\nonumber\\
  &=&a^{-\frac{s+t}{2}} \alpha e^{2\pi i N\beta\cdot b} \overline{\widehat{w}(A^{-1}_a R^{-1}_\theta(a\cdot e_\theta-N\beta)))}.\nonumber
\end{eqnarray*}
Notice that $\widehat{w}(\xi)$ is compactly supported in the unit ball, $W_f(a,\theta,b)$ is able to provide a preliminary estimate of the local wave-vector $N\beta$, since the nonzero $W_f(a,\theta,b)$ is located in the regime
\[
|A^{-1}_a R^{-1}_\theta(a\cdot e_\theta-N\beta)|\leq 1.
\]
This implies that, for each $b$, $W_f(a,\theta,b)$ has a support of length $O(|N\beta|^t)$ and width $O(|N\beta|^s)$ around the wave-vector $N\beta$ in the variable $a$ and $\theta$. Nevertheless, the resolution of this estimate is too low. Further observation tells us that the oscillation of
$W_f(a,\theta,b)$ in the $b$ variable in fact uncovers $N\beta$ by
\begin{eqnarray*}
 \grad_b W_f(a,\theta,b)
&=& 2\pi i N\beta a^{-\frac{s+t}{2}} \alpha e^{2\pi i N\beta\cdot b} \overline{\widehat{w}(A^{-1}_a R^{-1}_\theta(a\cdot e_\theta-N\beta))}\nonumber\\
&=& \big(2\pi iW_f(a,\theta,b)\big)N\beta.\nonumber
\end{eqnarray*}
This motivates the definition of the local wave-vector estimation for a general
function $f(x)$ as follows.
\begin{definition} 
  \label{def:LW}
  The local wave-vector estimation of a function $f(x)$ at
  $(a,\theta,b)$ is
  \begin{equation}
    v_f(a,\theta,b)=\frac{ \grad_b W_f(a,\theta,b) }{ 2\pi i W_f(a,\theta,b)} \label{eq:IWE}
  \end{equation}
  for $a\in [1,\infty)$, $\theta\in [0,2\pi)$, $b\in\R^2$ such that $W_f(a,\theta,b)\not=0$.
\end{definition}

It is remarkable that $v_f(a,\theta,b)$ estimates the local wave-vectors independently of the amplitude $\alpha$ or the position $b$. Hence, if the coefficients with the same $v_f$ are reallocated together, then there would be a sharpened phase space representation of $f(x)$, a clear picture of nonzero energy concentrating around local wave-vectors. Mathematically speaking, the synchrosqueezed energy distribution is defined as follows.
\begin{definition}
  Given $f(x)$, $W_f(a,\theta,b)$, and $v_f(a,\theta,b)$, the synchrosqueezed energy
  distribution $T_f(v, b)$ is 
  \begin{equation}
    T_f(v,b) = \int |W_f(a,\theta,b)|^2 \delta(\Re v_f(a,\theta,b)-v) adad\theta \label{eq:SED}
  \end{equation}
  for $v\in \R^2$, $b\in \R^2$.
\end{definition} 
For $f(x)$ with Fourier transform vanishing for
$|\xi|< 1$, the following norm equivalence holds
\[
\int T_f(v,b) dvdb =\int |W_f(a,\theta,b)|^2 adad\theta db \eqsim \|f\|_2^2
\]
as a consequence of the $L^2$ norm equivalence between $W_f(a,\theta,b)$ and
$f(x)$.

Equipped with the definitions above, let us consider now a general function of the form
\[
f(x) = e^{-(\phi(x)-c)^2/\sigma^2}\alpha(x) e^{2\pi i  N\phi(x)}
\]
with a smooth amplitude $\alpha(x)$, a smooth phase $\phi(x)$, a banded parameter $\sigma= \Theta(N^{-\eta})$ ($\eta<t$) and a
sufficiently large $N$. It will be shown that the general curvelet
transform $W_f(a,\theta,b)$ for each $b$ is essentially supported in the following
set
\begin{equation}
\label{support}
\{(a,\theta): |A^{-1}_a R^{-1}_\theta(a\cdot e_\theta-N\grad\phi(b))| \leq 1\}.
\end{equation}
In the meantime, $v_f(a,\theta,b)$ is an accurate estimation of the local wave-vector $N\grad\phi$ independent of $a$ and $\theta$, which implies that the essential support of the synchrosqueezed energy distribution $T_f(v,b)$ in $v$ is concentrating around $N\grad\phi$ at each location.

\subsection{Mode decomposition}
\label{sub:mode}
In the previous subsection, the property of the synchrosqueezed curvelet transform that it concentrates the energy of a banded wave-like component around its wave-vectors has been informally discussed. In what follows, the procedure of the mode decomposition after synchrosqueezing will be presented. For simplicity, let
\[
f(x)=e^{-(\phi_1(x)-c_1)^2/\sigma^2_1}\alpha_1(x) e^{2\pi i N \phi_1(x)} + e^{-(\phi_2(x)-c_2)^2/\sigma^2_2}\alpha_2(x) e^{2\pi i N \phi_2(x)},
\]
with smooth amplitudes $\alpha_1(x)$ and $\alpha_2(x)$, banded parameters $\sigma_1$ and $\sigma_2$ of order $\Theta(N^{-\eta})$ ($\eta<t$), smooth phases
$N\phi_1(x)$ and $N\phi_2(x)$ for a sufficiently large $N$. Let us assume that at each position the
local wave-vectors $N\grad\phi_1(x)$ and $N\grad\phi_2(x)$ are
sufficiently large and well-separated from each other. 

The decomposition relies on four steps summarized below.
\begin{enumerate}
\item By \eqref{support}, the essential supports of $W_{f_1}(a,\theta,b)$ and $W_{f_2}(a,\theta,b)$ are contained in the following sets
\begin{eqnarray*}
P_1 = \{(a,\theta,b): |A^{-1}_a R^{-1}_\theta(a\cdot e_\theta-N\grad\phi_1(b))| \leq 1\}, \nonumber\\
P_2 = \{(a,\theta,b): |A^{-1}_a R^{-1}_\theta(a\cdot e_\theta-N\grad\phi_2(b))| \leq 1 \}.\nonumber
\end{eqnarray*}
Because both $|N\grad\phi_1(b)|$ and $|N\grad\phi_2(b)|$ are large, and $N\grad\phi_1(x)$ and $N\grad\phi_2(b)$ are sufficiently
well-separated, these two sets are essentially disjoint. Hence, the essential support of $W_{f}(a,\theta,b)$ is separated into two essentially disjoint sets, each of which corresponds to one component in $f(x)$.
\item The separation in Step $1$ implies that for each $b$
\[
v_f(a,\theta,b)=v_{f_1}(a,\theta,b)\approx N\grad\phi_1 \text{ in }P_1,
\]
and 
\[
v_f(a,\theta,b)=v_{f_2}(a,\theta,b)\approx N\grad\phi_2\text{ in }P_2.
\]
Though $v_f(a,\theta,b)$ is defined wherever $W_f(a,\theta,b)\neq 0$, it is only relevant when $|W_f(a,\theta,b)|$ is above a significant level, as it will be shown in Theorem \ref{thm:main} \eqref{thm:R}. Hence, it is sufficient to compute $v_f(a,\theta,b)$ in these disjoint essential supports $P_1$ and $P_2$ to estimate local wave-vectors of each component.
\item The separation in Step $2$ shows that $T_f(v,b)$ is essentially concentrating around two well-separated 2D surfaces \[S_1 = \{ (N|\grad
\phi_1(b)|,\theta_{\grad\phi_1(b)}, b): b\in \R^2 \}\] and \[S_2 = \{ (N|\grad \phi_2(b)|,\theta_{\grad\phi_2(b)}, b): b\in \R^2 \}.\]
Hence, the essential support of $T_f(v,b)$ separates into two well disjoint sets $U_1$ and $U_2$.
\item Notice that $T_f(v,b) = T_{f_1}(v,b)$ in $U_1$ and, respectively, $T_f(v,b) = T_{f_2}(v,b)$ in $U_2$. Once $U_1$ and $U_2$ are identified by some clustering technique, each component of $f(x)$ can be recovered by
\begin{align}
  & f_1(x) = \int_{\Re v_f(a,\theta,b) \in U_1} \tilde{w}_{a\theta b}(x) W_f(a,\theta,b) dad\theta db, \label{eq:REC}\nonumber\\
  & f_2(x) = \int_{\Re v_f(a,\theta,b) \in U_2} \tilde{w}_{a\theta b}(x) W_f(a,\theta,b) dad\theta db, \nonumber
\end{align}
where the set of functions $\{ \tilde{w}_{a\theta b}(x), a\in [1,\infty),\theta\in [0,2\pi), b\in \R^2 \}$ is
the dual frame of $\{w_{a\theta b}(x),a\in [1,\infty),\theta\in [0,2\pi), b\in \R^2\}$. 
\end{enumerate}
The synchrosqueezing step 2 and 3 are indispensable, because they improve the resolution of original results significantly so that clustering is possible for decomposition. In step 4, the reconstruction is based on the Calderon-type reconstruction formula
for the reason that curvelet transforms, unlike wavelet transforms in \cite{Daubechies2011}, do not have a reconstruction formula that integrates their coefficients over the scale parameter with a proper weight. In effect, numerical examples in \cite{Daubechies2011} are based on the Calderon-type reconstruction formula, since it works more robustly in noisy cases.

\subsection{Related work}
\label{sub:RelatedWork}
There is another interesting line of work for mode decomposition, which is the empirical mode decomposition (EMD) initiated and refined by Huang et al in \cite{Huang1998,Huang2009}. Starting from the most oscillatory mode, the EMD method decomposes a signal into a collection of intrinsic mode functions (IMFs) and estimates instantaneous frequencies via the Hilbert transform. However, the dependence on local extrema limits its applications in noisy cases. To address the robustness problem, some variants were proposed in \cite{Hou2009,Wu2009}. Following the idea of EMD, there are two existing methods for high dimensional mode decomposition. The first one is based on high dimensional interpolation \cite{B1,B2,B3,B4} and the second one applies a 1D decomposition to each dimension and then combines the results with a proper combination strategy \cite{E1,E2,MEEMD}. In spite of their considerable success, these existing methods in this research line are not suitable to separate two modes with similar wave-numbers but different wave-vectors due to the lack of anisotropic angular separation as discussed in \cite{SSWPT}.

Following the same methodology of extracting modes one by one from the most oscillatory one, Hou et al proposed an optimization scheme for mode decomposition in \cite{Hou2011,Hou2012}. Inspired by recent developments of compressive sensing, the first paper \cite{Hou2011} is based on total variations, while the second one \cite{Hou2012} is based on the sparse representation in a data-driven time-frequency dictionary. The convergence of the data-driven time-frequency analysis method under a certain sparsity assumption is proved recently in \cite{Hou2013}. However, the analysis of high dimensional case is still under active research.

There is another research line of adaptive time-frequency representations, the empirical transforms proposed in \cite{EWT} and generalized to 2D in \cite{2DEWT}. The 2D methods in \cite{2DEWT} fall into two kinds. The first one is based on the Fourier spectra of 1D data slices and, hence, lacks the anisotropic angular separation for the same reason of the 2D EMD methods. The second one is based on 2D Pseudo-Polar FFT \cite{Averbuch06fastand,Averbuch_aframework} and suffers the problem of inconsistency, i.e., the results of Fourier boundaries detections in different directions in the 2D Fourier domain are discontinuous. To avoid this problem, the authors compute an average spectrum where the averaging is taken with respect to the angle. The resulting methods are short of the angular separation for the same reason of the synchrosqueezed wavelet transform in \cite{2Dwavelet} as discussed in \cite{SSWPT}.

The rest of the paper is organized as follows. The main theoretical results of SSCT is presented in Section \ref{sec:theorems}. We prove that SSCT is able to estimate the local wave-vectors under some well-separation condition of the local wave-vectors of multiple highly oscillatory components.  In Section \ref{sec:algo}, a discrete analogue of SSCT and some clustering methods in the phase space are introduced. Section \ref{sec:results} compares several numerical examples on local wave-vector estimation using SSWPT and SSCT, and provides decomposition examples with synthetic and real data to demonstrate the proposed properties of SSCT. Finally, this article will end up with some discussions in Section \ref{sec:diss}.

\section{Analysis of the transform}
\label{sec:theorems}

In this section, we define a class of superpositions of multiple banded components with well-separated local wave-vectors and prove that the synchrosqueezed curvelet transform is able to estimate these local wave-vectors accurately. Throughout the analysis, the scaling parameters $s$ and $t$ are fixed such that $\frac{1}{2}<s<t<1$ and $\eta<t$.
\begin{definition}
  \label{def:IMTF}
For any $c\in \R$, $N>0$ and $M>0$, a function $f(x)=e^{-(\phi(x)-c)^2/\sigma^2}\alpha(x)e^{2\pi iN \phi(x)}$ is a banded intrinsic mode function of type $(M,N)$, if $\sigma = \Theta(N^{-\eta})$, $\alpha(x)$ and $\phi(x)$ satisfy
  \begin{align*}
    \alpha(x)\in C^\infty, \quad |\grad \alpha|\leq M, \quad 1/M \leq \alpha\leq M, \\
    \phi(x)\in C^\infty,  \quad  1/M \leq |\grad \phi|\leq M, \quad |\grad^2 \phi|\leq M.\\
   \end{align*}
\end{definition}
If $\eta$ tends to $-\infty$, the banded intrinsic mode function will become the one discussed in \cite{SSWPT}. So, the model in this article is more general.
\begin{definition}
  \label{def:SWSIMC}
  A function $f(x)$ is a well-separated superposition of type
  $(M,N,K)$ if
  \[
  f(x)=\sum_{k=1}^K f_k(x),
  \] 
  where each $f_k(x)=e^{-(\phi_k(x)-c_k)^2/\sigma_k^2}\alpha_k(x)e^{2\pi iN \phi_k(x)}$ is a banded intrinsic mode function of type $(M,N)$ and they satisfy the
  separation condition: $\forall a\in [1,\infty)$ and $\forall \theta\in [0,2\pi)$, there is at most one banded intrinsic mode function $f_k$ satisfying that
  \[
  |A^{-1}_a R^{-1}_\theta (a\cdot e_\theta-N\grad\phi_k(b))|\leq 1.
  \]
We denote by $F(M,N,K)$ the set of all
  such functions.
\end{definition}

Recall that $W_f(a,\theta,b)$ is the general curvelet transform of a function $f(x)$ with
geometric scaling parameter $\frac{1}{2}<s<t<1$, and
$v_f(a,\theta,b)$ is the local wave-vector estimation. The following
theorem is the main theoretical result for the synchrosqueezed curvelet transform.
\begin{theorem}
  \label{thm:main}
  For a function $f(x)$, which is a well-separated superposition of some type $(M,N,K)$, and any $\eps>0$, define
  \begin{equation}
  \label{thm:R}
  R_{f,\eps} = \left\{(a,\theta,b): |W_f(a,\theta,b)|\geq a^{-\frac{s+t}{2}}\sqrt \eps\right\}
  \end{equation}
  and 
  \[
  Z_{f,k} = \left\{(a,\theta,b): |A^{-1}_a R^{-1}_\theta(a\cdot e_\theta-N\grad\phi_k(b))|\leq 1 \right\}
  \]
  for $1\le k\le K$. For fixed $M$, $K$, and any $\epsilon$, 
there exists $N_0(M,K,\eps)>0$ such that for any $N>N_0(M,K,\eps)$ and $f(x)\in F(M,N,K)$ the following statements
  hold.
  \begin{enumerate}[(i)]
  \item $\left\{Z_{f,k}: 1\le k \le K\right\}$ are disjoint and $R_{f,\eps}
    \subset \bigcup_{1\le k \le K} Z_{f,k}$;
  \item For any $(a,\theta,b) \in R_{f,\eps} \cap Z_{f,k}$, 
    \[
    \frac{|v_f(a,\theta,b)-N\grad \phi_k(b)|}{ |N \grad \phi_k(b)|}\lesssim\sqrt \eps.
    \]
  \end{enumerate}
\end{theorem}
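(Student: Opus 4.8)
The plan is to reduce each component $f_k$ to the plane-wave computation already performed in Section~\ref{sub:SSCT} and then to control the linearization error uniformly in $N$. For a fixed $k$ and each base point $b$, I would introduce the frozen plane wave
\[
f_k^{\mathrm{lin}}(x) = g_k(b)\,e^{2\pi i N\left(\phi_k(b)+\grad\phi_k(b)\cdot(x-b)\right)},\qquad g_k(b)=e^{-(\phi_k(b)-c_k)^2/\sigma_k^2}\alpha_k(b),
\]
obtained by freezing the amplitude and the Gaussian band at $b$ and linearizing the phase. Applying the plane-wave identity with $\beta=\grad\phi_k(b)$ gives the closed form
\[
W_{f_k^{\mathrm{lin}}}(a,\theta,b)=a^{-\frac{s+t}{2}}g_k(b)e^{2\pi i N\phi_k(b)}\,\overline{\widehat{w}\!\left(A_a^{-1}R_\theta^{-1}(a\cdot e_\theta-N\grad\phi_k(b))\right)},
\]
which, since $\widehat{w}$ is supported in the unit ball, vanishes identically off $Z_{f,k}$; this is the exact version of \eqref{support}. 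The disjointness of the $Z_{f,k}$ in statement~(i) is then immediate from the separation condition of Definition~\ref{def:SWSIMC}, which says precisely that each $(a,\theta,b)$ lies in at most one $Z_{f,k}$. Everything else rests on an error estimate for $W_{f_k}-W_{f_k^{\mathrm{lin}}}$ and on a direct computation of $\grad_b W_{f_k}$.

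For the error I would change variables $y=A_aR_\theta^{-1}(x-b)$ to write
\[
W_{f_k}(a,\theta,b)=a^{-\frac{s+t}{2}}\int w(y)\,e^{-2\pi i a^{1-t}y_1}\,f_k\!\left(b+R_\theta A_a^{-1}y\right)dy,
\]
and Taylor expand the phase, amplitude, and Gaussian band of $f_k$ about $b$. On $Z_{f,k}$ one has $a=\Theta(N)$ (the ellipse radii $a^t,a^s$ are $o(a)$ while $|N\grad\phi_k(b)|=\Theta(N)$), and on the essential support $|y|\lesssim 1$ the three corrections are: the amplitude error $\alpha_k(x)-\alpha_k(b)=O(Ma^{-s}|y|)=O(N^{-s})$; the quadratic phase error $N\,O(a^{-2s}|y|^2)=O(N^{1-2s})$, which is where $s>\tfrac12$ is used; and the band error, governed by the variation of $(\phi_k(x)-c_k)/\sigma_k$. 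Here the decisive geometric point is that $\grad\phi_k(b)$ is essentially parallel to $e_\theta$, i.e. to the \emph{narrow} $a^{-t}$ axis of the curvelet, so that $\phi_k(x)-\phi_k(b)=O(Ma^{-t}|y|)$ and the band correction is $O\!\left(\frac{a^{-t}}{\sigma_k}\right)=O(N^{\eta-t})$, which tends to $0$ exactly because $\eta<t$. Splitting the integral at $|y|=R(N)$ and using the Schwartz decay of $w$ on the tail (where $|f_k-f_k^{\mathrm{lin}}|\le 2M$) to absorb the region in which the Taylor remainders are no longer small, then balancing $R(N)$, yields a uniform bound $|W_{f_k}-W_{f_k^{\mathrm{lin}}}|\lesssim a^{-\frac{s+t}{2}}N^{-\delta}$ for some $\delta>0$ depending on $s,t,\eta$. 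For $a$ outside the range $\Theta(N)$ the transform $W_{f_k}$ is itself negligible, since the $y$-gradient of the integrand phase equals $-A_a^{-1}R_\theta^{-1}(a\cdot e_\theta-N\grad\phi_k(x))$, which is bounded away from $0$ there, so repeated integration by parts against the Schwartz function $w$ forces rapid decay.

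Statement~(i) then follows: if $(a,\theta,b)\notin\bigcup_k Z_{f,k}$ every $W_{f_k^{\mathrm{lin}}}$ vanishes, so $|W_f|=\big|\sum_k(W_{f_k}-W_{f_k^{\mathrm{lin}}})\big|\le K\,C\,a^{-\frac{s+t}{2}}N^{-\delta}$, which is below the threshold $a^{-\frac{s+t}{2}}\sqrt{\eps}$ once $N>N_0(M,K,\eps)$; hence $R_{f,\eps}\subset\bigcup_k Z_{f,k}$. For statement~(ii) I would differentiate under the integral sign using $\grad f_k=\big[2\pi i N\grad\phi_k-\tfrac{2(\phi_k-c_k)}{\sigma_k^2}\grad\phi_k+\tfrac{\grad\alpha_k}{\alpha_k}\big]f_k$, so that
\[
\grad_b W_{f_k}(a,\theta,b)=2\pi i N\grad\phi_k(b)\,W_{f_k}(a,\theta,b)+E(a,\theta,b),
\]
where $E$ collects three corrections whose sizes, relative to the leading term $N\,a^{-\frac{s+t}{2}}$, are $O(a^{-s})$ (from $\grad\phi_k(x)-\grad\phi_k(b)$, using $|\grad^2\phi_k|\le M$), $O(N^{\eta-1})$ (the new band-derivative term, bounded since $(\phi_k-c_k)\sigma_k^{-2}e^{-(\phi_k-c_k)^2/\sigma_k^2}=O(\sigma_k^{-1})=O(N^\eta)$ and $\eta<1$), and $O(N^{-1})$ (from $\grad\alpha_k/\alpha_k$). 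On $R_{f,\eps}\cap Z_{f,k}$ the other components are negligible, so $W_f\approx W_{f_k}$ and $\grad_b W_f\approx\grad_b W_{f_k}$, whence
\[
\big|v_f(a,\theta,b)-N\grad\phi_k(b)\big|=\frac{|E|+(\text{negligible})}{2\pi|W_f|}\lesssim\frac{N\,\eps\,a^{-\frac{s+t}{2}}}{a^{-\frac{s+t}{2}}\sqrt{\eps}}=N\sqrt{\eps},
\]
and dividing by $|N\grad\phi_k(b)|\ge N/M$ gives the claimed relative error $\lesssim\sqrt{\eps}$. I expect the main obstacle to be the uniform error estimate of the second paragraph: making the band correction rigorous — in particular keeping the factor $e^{-(\phi_k(b)-c_k)^2/\sigma_k^2}$ \emph{outside} the estimate, as part of the retained prefactor $g_k(b)$, so that only the relative change of the Gaussian exponent, of order $N^{\eta-t}$, must be controlled — together with the non-stationary-phase bound that disposes of the off-support region. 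The remaining bookkeeping is analogous to the wave-packet case in \cite{SSWPT}.
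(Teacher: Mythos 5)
Your overall strategy is the same as the paper's: freeze each component at $b$, use the plane-wave identity for the linearized signal, control the Taylor remainders of the phase, amplitude, and Gaussian band (your three corrections are exactly the paper's $I_1$--$I_4$ in Lemma \ref{lem:A}), and dispose of the off-support region by non-stationary phase; your route to (ii) via $\grad f_k$ is a harmless variant of Lemma \ref{lem:C}. But there is a genuine gap in the off-support estimate that your proof of (i) needs. Your inequality $|W_f|\le\sum_k|W_{f_k}-W_{f_k^{\mathrm{lin}}}|$ off $\bigcup_k Z_{f,k}$ requires the comparison error to be uniformly $O(a^{-\frac{s+t}{2}}N^{-\delta})$ for \emph{all} $(a,\theta)$, yet your Taylor analysis is valid only when $\theta$ is nearly aligned with $\theta_{\grad\phi_k(b)}$: the band correction is $O\bigl(\sigma_k^{-1}\,|\grad\phi_k(b)\cdot R_\theta A_a^{-1}y|\bigr)$, and once the misalignment angle is of order one this becomes $O(N^{\eta-s}|y|)$, which diverges because $\eta<t$ permits $\eta>s$ (e.g.\ $s=\tfrac12+\tfrac18$, $t=1-\tfrac18$, $\eta=0.8$, the regime the theorem must cover). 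So the region $a=\Theta(N)$ with wrong angle --- outside $Z_{f,k}$, but untouched by your radial non-stationary-phase argument, which you restrict to ``$a$ outside the range $\Theta(N)$'' --- is handled by neither of your two mechanisms. The paper treats it as Case 2 of Lemma \ref{lem:A}, introducing the threshold angle $\theta_0=\arcsin((M/N)^{t-s})$ and proving $|\grad g|\gtrsim N^{1-t}$ there by a geometric argument.

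Moreover, even where non-stationary phase does apply, ``repeated integration by parts against the Schwartz function $w$'' fails as stated: the amplitude is not $w$ but $h=w\,e^{-(\phi-c)^2/\sigma^2}\alpha$, and each derivative of the Gaussian factor costs $\sigma^{-1}=\Theta(N^{\eta})$ while each pass gains only $|\grad g|^{-1}=O(N^{t-1})$; since $\eta$ may exceed $1-t$, the naive per-pass ratio $N^{\eta-(1-t)}$ grows, so iterating makes things worse, not better. The paper uses a \emph{single} integration by parts together with an alignment cancellation your sketch never invokes: the dominant part of $\grad h$ is parallel to $A_a^{-1}z$ with $z=R_\theta^{-1}\grad\phi$, which is nearly parallel to $\grad g=2\pi\bigl(NA_a^{-1}z-(a^{1-t},0)^{T}\bigr)$, whence
\[
\Bigl|\frac{(A_a^{-1}z)\cdot\grad g}{|\grad g|^2}\Bigr|\lesssim\frac{1}{N},
\]
so the band term contributes $O(N^{\eta-1})$ rather than $O(N^{\eta+t-1})$. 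Your device of keeping $e^{-(\phi(b)-c)^2/\sigma^2}$ outside the estimate works only in the aligned on-support region; off support you need this cancellation (or an equivalent), both for $W_{f_j}$ and for the gradient bound $\grad_b W_{f_j}=O(N\eps\,a^{-\frac{s+t}{2}})$ that your step ``$\grad_b W_f\approx\grad_b W_{f_k}$'' in (ii) tacitly uses. The disjointness claim in (i) and the final ratio computation in (ii) are correct as you argue them.
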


For simplicity, the notations $O(\cdot)$, $\lesssim$ and $\gtrsim$ are used when the implicit constants may only depend on $M$ and $K$. The proof of the
theorem relies on several lemmas. The following one estimates
$W_f(a,\theta,b)$.
\begin{lemma}
  \label{lem:A}
Suppose \[\Omega=\left\{ (a,\theta):a\in\left(\frac{N}{2M},2MN\right),\exists k\ s.t.\ \left|\theta_{\grad\phi_k(b)}-\theta\right|<\theta_0\right\},\] where $\theta_0=\arcsin((\frac{M}{N})^{t-s})$.
  Under the assumption of the theorem, the following estimation of $W_f(a,\theta,b)$ holds for any $\eps$, when $N$ is sufficiently large.
 \begin{enumerate}[(1)]
  \item If $(a,\theta)\in \Omega$,\[W_f(a,\theta,b)=a^{-\frac{s+t}{2}} \left( \sum_{k:\ |\theta_{\grad\phi_k(b)}-\theta|<\theta_0} f_k(b)\widehat{w}\left(A^{-1}_a R^{-1}_\theta (a\cdot e_\theta-N\grad\phi_k(b))\right)+O(\eps) \right);\]
  \item Otherwise,\[W_f(a,\theta,b)=a^{-\frac{s+t}{2}}O(\eps).\]
  \end{enumerate}

\end{lemma}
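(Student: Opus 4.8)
The plan is to establish the single-component estimate
\[
W_{f_k}(a,\theta,b) = a^{-\frac{s+t}{2}}\left( f_k(b)\,\widehat{w}\bigl(A^{-1}_a R^{-1}_\theta(a\cdot e_\theta - N\grad\phi_k(b))\bigr) + O(\eps)\right),
\]
in which the main term is automatically zero unless $(a,\theta)\in\Omega$ and $|\theta_{\grad\phi_k(b)}-\theta|<\theta_0$, and then to sum over $k$ using linearity $W_f=\sum_k W_{f_k}$. Starting from Definition \ref{def:GCT} and the space-domain form of the curvelet in Definition \ref{def:GC2D} (since $\widehat w$ is real and radial, $w$ is real, so the conjugate may be dropped), I would substitute $y=A_a R^{-1}_\theta(x-b)$, i.e.\ $x=b+R_\theta A^{-1}_a y$. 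Using $(x-b)\cdot e_\theta = a^{-t}y_1$ and $\det(R_\theta A_a^{-1})=a^{-(s+t)}$, this turns the coefficient into $a^{-\frac{s+t}{2}}\int w(y)\,e^{-2\pi i a^{1-t}y_1}\,f_k(b+R_\theta A^{-1}_a y)\,dy$, with the Schwartz mother curvelet $w(y)$ as a rapidly decaying weight.

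The core is a Taylor expansion of $f_k(b+R_\theta A^{-1}_a y)$ about $b$: factoring out $f_k(b)$ and keeping the linear phase $2\pi i N\grad\phi_k(b)\cdot(R_\theta A_a^{-1}y)$, which combines with $e^{-2\pi i a^{1-t}y_1}$ and the identity $\int w(y) e^{2\pi i y\cdot\zeta}dy=\widehat w(\zeta)$ to produce exactly the stated $\widehat w$-factor. I would then bound the three resulting error contributions over the region where $w$ concentrates: the quadratic phase remainder is $O(MN^{1-2s})$, small because $s>\frac12$; the amplitude variation is $O(M^2 a^{-s})=O(N^{-s})$; and the Gaussian-envelope variation is the delicate one. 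Writing the envelope ratio as $\exp\!\bigl(-(2u\delta+\delta^2)/\sigma_k^2\bigr)$ with $u=\phi_k(b)-c_k$ and $\delta=\phi_k(x)-\phi_k(b)$, the decisive point is that when $\theta$ lies within $\theta_0$ of $\theta_{\grad\phi_k(b)}$ the wave-vector is nearly aligned with $e_\theta$, so $\delta$ is controlled by the \emph{narrow} curvelet dimension $a^{-t}$ rather than the long one $a^{-s}$, giving $|\delta|\lesssim a^{-t}\sim N^{-t}$. Since $f_k(b)$ itself forces the relevant range $|u|\lesssim\sigma_k\sqrt{\log(1/\eps)}=\Theta(N^{-\eta})\sqrt{\log(1/\eps)}$, the exponent is $O\bigl(\sqrt{\log(1/\eps)}\,N^{\eta-t}\bigr)$, which vanishes as $N\to\infty$ \emph{precisely because} $\eta<t$. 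This matching of the envelope scale $N^{-\eta}$ to the narrow curvelet width $N^{-t}$ is exactly where the scaling $s<t$ is used, and I expect it to be the main obstacle: the wave-packet argument of \cite{SSWPT} bounds $\delta$ only by the isotropic width $a^{-s}$ and would force the stronger $\eta<s$. The Schwartz decay of $w$ is used to discard the tail $|y|$ large where the linear approximations fail, and---together with the fact that the band is much wider ($N^{-\eta}$) than the narrow curvelet dimension ($N^{-t}$)---to handle $b$ far from the band, where $f_k$ stays below $\eps$ throughout the essential support and both $W_{f_k}$ and the main term are $O(\eps)$; this makes the remainder uniform.

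Finally, for the localization I would use that $\widehat w$ is supported in the unit ball. With $\zeta=-A^{-1}_a R^{-1}_\theta(a\cdot e_\theta - N\grad\phi_k(b))$ the components are $\zeta_1=a^{-t}\bigl(N|\grad\phi_k|\cos(\theta_{\grad\phi_k(b)}-\theta)-a\bigr)$ and $\zeta_2=a^{-s}N|\grad\phi_k|\sin(\theta_{\grad\phi_k(b)}-\theta)$, and the main term is nonzero only if $|\zeta_1|\le1$ and $|\zeta_2|\le1$. A short case analysis using $1/M\le|\grad\phi_k|\le M$ shows $|\zeta_1|\le1$ forces $a\in(N/(2M),2MN)$ while $|\zeta_2|\le1$ forces $|\sin(\theta_{\grad\phi_k(b)}-\theta)|\le a^s/(N|\grad\phi_k|)=O(N^{s-1})$, which for large $N$ is well inside $\sin\theta_0=(M/N)^{t-s}=\Theta(N^{s-t})$; hence the main term vanishes whenever $(a,\theta)\notin\Omega$ or $|\theta_{\grad\phi_k(b)}-\theta|\ge\theta_0$, and conversely any $k$ with $|\theta_{\grad\phi_k(b)}-\theta|\ge\theta_0$ gives $|\zeta_2|\gtrsim N^{1-t}>1$ and drops out of the sum. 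Summing the $K$ single-component estimates, collecting the at most $K$ error terms into one $O(\eps)$, and choosing $N_0(M,K,\eps)$ so that every remainder above is below $\eps$ yields both case (1) and case (2).
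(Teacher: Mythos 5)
Your treatment of case (1) is essentially the paper's: the same change of variables $x=b+R_\theta A^{-1}_a y$, the same three Taylor-expansion errors (quadratic phase, amplitude variation, Gaussian-envelope variation), and crucially the same key observation that alignment $|\theta_{\grad\phi_k(b)}-\theta|<\theta_0$ forces the directional derivative $\grad\phi_k(b)\cdot(R_\theta A^{-1}_a y)$ to be of size $N^{-t}$ rather than $a^{-s}$, so that $\eta<t$ makes the envelope error vanish — this is exactly the paper's estimate $I_1\lesssim Md^3N^{-t}/\sigma$, and your identification of this as the step where $s<t$ earns its keep is correct. However, there is a genuine gap in how you obtain case (2). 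You claim the single-component expansion $W_{f_k}=a^{-\frac{s+t}{2}}\bigl(f_k(b)\widehat{w}(\zeta)+O(\eps)\bigr)$ holds for \emph{all} $(a,\theta)$ and let the compact support of $\widehat{w}$ annihilate the main term outside $\Omega$. But the error bounds you derive are only valid in the aligned regime with $a\sim N$. Two failures: (i) for $a\notin(\frac{N}{2M},2MN)$, in particular $a=O(1)$, the quadratic phase remainder is $O(Na^{-2s})$, which blows up, and the amplitude and envelope variations $O(a^{-s})$ and $O(a^{-s}/\sigma)$ are not small either; (ii) for $a\sim N$ but $|\theta_{\grad\phi_k(b)}-\theta|\geq\theta_0$, your bound on $\delta$ degrades to the isotropic $a^{-s}$, so the envelope error is of size $N^{\eta-s}\sqrt{\log(1/\eps)}$, which does not tend to zero precisely when $s\leq\eta<t$ — the very parameter range where the curvelet model goes beyond the wave-packet one. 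You flagged this issue for the aligned case but it equally defeats your unaligned case; absolute-value bounds on Taylor remainders simply cannot decay there, and one must exploit the oscillation of the integrand.

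The paper supplies the missing ingredient with a non-stationary-phase argument: writing $W_f=a^{-\frac{s+t}{2}}\int h\,e^{ig}\,dy$, it proves $|\grad g|\gtrsim N^{1-t}$ whenever $a\notin(\frac{N}{2M},2MN)$ or the angle is off by at least $\theta_0$, and then integrates by parts using the operator $L=\frac{1}{i}\frac{\langle\grad g,\grad\rangle}{|\grad g|^2}$ (with a smooth cutoff $X_D$ in the angular case). The decisive gain is that the large factor $1/\sigma\sim N^{\eta}$ produced by differentiating the Gaussian envelope in $h$ is paired with the estimate $\bigl|\frac{(A_a^{-1}z)\cdot\grad g}{|\grad g|^2}\bigr|\lesssim\frac{1}{N}$, yielding a contribution $N^{\eta-1}$, which is small since $\eta<t<1$. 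Without this, or some equivalent device exploiting $e^{ig}$, your case (2) — and hence part (i) of Theorem \ref{thm:main}, which relies on it — remains unproven.
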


\begin{proof}
   We only need to discuss the case when $K=1$. The result for general $K$ is an easy extension by the linearity of general curvelet transform. Suppose $f(x)$ contains a single banded intrinsic mode function of type $(M,N)$
  \[
  f(x)=e^{-(\phi(x)-c)^2/\sigma^2}\alpha(x)e^{2\pi i N \phi(x)}.
  \]
We claim that when $N$ is large enough, the approximation of $W_f(a,\theta,b)$ holds. By the definition of general curvelet transform, it holds that
  \begin{eqnarray*}
W_f(a,\theta,b)&=& \int_{R^2} f(x)a^{\frac{s+t}{2}}w(A_a R^{-1}_\theta (x-b))e^{-2\pi i a(x-b)\cdot e_\theta}dx\\
    &=&a^{-\frac{s+t}{2}}\int_{R^2}f(b+R_\theta A^{-1}_a y)w(y)e^{-2\pi i a^{1-t} y_1}dy.
  \end{eqnarray*}

\textbf{Step 1:} We start with the proof of $(2)$ first. 

Let $h(y)=w(y)e^{-(\phi(b+R_\theta A^{-1}_a y)-c)^2/\sigma^2}\alpha(b+R_\theta A^{-1}_a y)$ and $g(y)=2\pi(N \phi(b+R_\theta A^{-1}_a y)-a^{1-t}y_1)$, then we have
  \[
W_f(a,\theta,b)=a^{-\frac{s+t}{2}}  \int_{\R^2} h(y) e^{i g(y)} dy, 
  \]
with real smooth functions $h(y)$ and $g(y)$. Consider the
  differential operator
  \[
  L =\frac{1}{i}\frac{\langle\grad g,\grad\rangle}{ |\grad g|^2}.
  \]
  If $|\grad g|$ does not vanish, we have
  \[
  L e^{ig} = \frac{\langle \grad g, i\grad g e^{ig} \rangle}{i |\grad g|^2} = e^{i g}.
  \]
By the definition of $w(y)$, we know $h(y)$ is decaying rapidly at infinity. Then we can apply integration by parts to get 
  \[
  \int_{\R^2} h e^{ig} dy = \int_{\R^2} h (L e^{i g}) dy = -\int_{\R^2} \grad\cdot\bigl( \frac{h\grad g}{i|\grad g|^2} \bigr)e^{ig}dy. 
  \]
Hence, we need to estimate $\left| \grad\cdot\bigl( \frac{h\grad g}{i|\grad g|^2} \bigr)\right|$.
Because 
\[
\grad\cdot\bigl( \frac{h\grad g}{i|\grad g|^2} \bigr) = \frac{1}{i}\biggl( \frac{\grad h\cdot\grad g}{|\grad g|^2} + h\grad\cdot\bigl( \frac{\grad g}{|\grad g|^2}\bigr)\biggr)
\]
and $|h(y)|\lesssim 1$, we only need to estimate $\left|\frac{\grad h\cdot\grad g}{|\grad g|^2}\right|$ and $\left| \frac{\partial^2 g}{\partial y_i\partial y_j} \frac{1}{|\grad g|^2}\right|$ for $i,j=1,2$. 

Let $z=(z_1,z_2)^T= R^{-1}_\theta\grad\phi(b+R_\theta A^{-1}_a y)$, $v_1=NA^{-1}_a R^{-1}_\theta\grad\phi(b+R_\theta A^{-1}_a y)$ and $v_2=(a^{1-t},0)^{T}$, then $\grad g(y)=2\pi (v_1-v_2)=2\pi((Nz_1-a)a^{-t},Na^{-s}z_2)$.

\textbf{Case 1:} $a\notin (\frac{N}{2M},2MN)$.

When $a\geq 2MN$, then
\[|\grad g(y)|\geq a^{1-t}-MNa^{-t}=\frac{a^{1-t}}{2}+(\frac{a}{2}-MN)a^{-t}\geq\frac{a^{1-t}}{2}\gtrsim N^{1-t}.\]

When $a\leq\frac{N}{2M}$, then
\[|\grad g(y)|\gtrsim \frac{Na^{-t}}{M}-a^{1-t} \geq \frac{Na^{-t}}{2M} \gtrsim N^{1-t}.\]

So
\begin{equation}
\label{eqn:case1ieq1}
|\grad g(y)|\gtrsim N^{1-t}
\end{equation}
for $a\notin (\frac{N}{2M},2MN)$.

 If $a\geq 2MN$,  then $\left| \frac{\partial^2 g}{\partial y_i\partial y_j} \right|\lesssim Na^{-2s}\lesssim N^{1-2s}$, implying that 
\[
\left| \frac{\partial^2 g}{\partial y_i\partial y_j} \frac{1}{|\grad g|^2}\right|\lesssim N^{1-2s}/N^{2-2t}=\frac{1}{N^{1-2(t-s)}}.
\]
Since $|z|\geq\frac{1}{M}$, then either $|z_1|\geq\frac{1}{\sqrt{2}M}$ or $|z_2|\geq\frac{1}{\sqrt{2}M}$ holds. If $a\leq \frac{N}{2M}$, then
\begin{eqnarray*}
\left| \frac{\partial^2 g}{\partial y_i\partial y_j} \frac{1}{|\grad g|^2}\right|&\lesssim&\frac{Na^{-2s}}{(Nz_1-a)^2a^{-2t}+N^2a^{-2s}z_2^2}\\
&=&\frac{1}{(z_1-\frac{a}{N})^2Na^{-2(t-s)}+Nz_2^2}\\
&\lesssim&\max\{\frac{1}{N^{1-2(t-s)}},\frac{1}{N}\}.\\
&=&\frac{1}{N^{1-2(t-s)}}.
\end{eqnarray*}
In sum, 
\begin{equation}
\label{eqn:case1ieq2}
 \left| \frac{\partial^2 g}{\partial y_i\partial y_j} \frac{1}{|\grad g|^2}\right|\lesssim \frac{1}{N^{1-2(t-s)}}
\end{equation}
for $a\notin (\frac{N}{2M},2MN)$.

Notice that the dominant term of $\grad h$ is
\[
w(y)\alpha(b+R_\theta A_a^{-1}y)e^{-(\phi(b+R_\theta A_a^{-1}y)-c)^2/\sigma^2}\cdot\frac{-2(\phi(b+R_\theta A_a^{-1}y)-c)}{\sigma^2}A_a^{-1}z
\]
and the other terms are of order $1$. Because $e^{-\frac{x^2}{\sigma^2}}\cdot \frac{|x|}{\sigma^2}\leq e^{-\frac{1}{2}}\cdot \frac{1}{\sigma\sqrt{2}}$, then
\[
\left|\frac{\grad h\cdot\grad g}{|\grad g|^2}\right|\lesssim\frac{1}{\sigma}\left|\frac{(A_a^{-1}z)\cdot \grad g}{|\grad g|^2}\right|+\left|\frac{1}{|\grad g|}\right|\lesssim N^\eta\left|\frac{(A_a^{-1}z)\cdot \grad g}{|\grad g|^2}\right|+\frac{1}{N^{1-t}}.
\]
Recall that $\grad g=2\pi(NA_a^{-1}z-(a^{1-t},0)^T)$, then 
\[
\frac{(A_a^{-1}z)\cdot \grad g}{|\grad g|^2}\approx\frac{(Nz_1-a)a^{-2t}z_1+Na^{-2s}z_2^2}{(Nz_1-a)^2a^{-2t}+N^2a^{-2s}z_2^2}.
\]

If $z_1z_2\neq0$, then $\left|\frac{Na^{-2s}z_2^2}{N^2a^{-2s}z_2^2}\right|=\frac{1}{N}$ and $\left| \frac{(Nz_1-a)a^{-2t}z_1}{(Nz_1-a)^2a^{-2t}}\right|\approx\frac{1}{|Nz_1-a|}\approx\frac{1}{N}$, which implies that $\left|\frac{(A_a^{-1}z)\cdot \grad g}{|\grad g|^2}\right|\lesssim \frac{1}{N}$. If $z_1z_2=0$, then it is easy to check that $\left|\frac{(A_a^{-1}z)\cdot \grad g}{|\grad g|^2}\right|\approx\frac{1}{N}$. Hence,
\begin{equation}
\label{eqn:case1ieq3}
\left|\frac{\grad h\cdot\grad g}{|\grad g|^2}\right|\lesssim N^\eta\left|\frac{(A_a^{-1}z)\cdot \grad g}{|\grad g|^2}\right|+\frac{1}{N^{1-t}}\lesssim \frac{1}{N^{1-\eta}}+\frac{1}{N^{1-t}}\lesssim\frac{1}{N^{1-t}}
\end{equation}
for $a\notin(\frac{N}{2M},2MN)$. 

By \eqref{eqn:case1ieq2} and \eqref{eqn:case1ieq3}, we have
\[
  \left|\int_{\R^2} h e^{ig} dy\right| = \left|\int_{\R^2} \grad\cdot\bigl( \frac{h\grad g}{i|\grad g|^2} \bigr)e^{ig}dy\right| \lesssim\left|\grad\cdot\bigl( \frac{h\grad g}{i|\grad g|^2} \bigr)\right|(||w||_{L^1}+||\grad w||_{L^1})\lesssim \frac{1}{N^{1-t}}
\]
for $a\notin(\frac{N}{2M},2MN)$. So, 
\[
W_f(a,\theta,b)=a^{-\frac{s+t}{2}}O(\eps),
\] 
when $N\gtrsim\eps^{\frac{-1}{1-t}}$ and $a\notin(\frac{N}{2M},2MN)$.

\textbf{Case 2:} $a\in (\frac{N}{2M},2MN)$ and $|\theta_{\grad\phi(b)}-\theta|\geq\theta_0$.

Observing that $\grad g(y)=2\pi A^{-1}_a R^{-1}_\theta (N\grad\phi(b+R_\theta A^{-1}_a y)-a\cdot e_\theta)$, we can expect $|\grad g|$ is large when $\theta_{\grad\phi(b)}$ is far away from $\theta$. Notice that $w(y)$ is in the Schwartz class, then $\exists C_m>0$ such that $|w(y)|\leq\frac{C_m}{y^m}$ for $|y|\geq 1$ and any $m$ large enough. So
\begin{eqnarray*}
 W_f(a,\theta,b)&=&a^{-\frac{s+t}{2}}\biggl(\int_{|y|\lesssim\eps^{-1/m}}f(b+R_\theta A^{-1}_a y)w(y) e^{-2\pi i a^{1-t}y_1} dy+O(\eps)\biggr).
\end{eqnarray*}
Define $D=\{y:|y|\lesssim\eps^{-1/m}\}$ and $D_+=\{y:|y|\lesssim\eps^{-1/m}+1\}$. Suppose $X_D(y)$ is a positive and smooth function compactly supported in $D_+$ such that $X_D(y)=1$ if $y\in D$, $||X_D||_{L^\infty}\leq 1$, then
\[
 W_f(a,\theta,b)=a^{-\frac{s+t}{2}}\biggl( O(\eps)+\int_{D_+}X_D(y)h(y)e^{ig(y)}dy\biggr).
\]
If $|\grad g(y)|$ is not vanishing in $D_+$, then apply the integral by parts to get
  \[
  \int_{D_+} X_Dh e^{ig} dy = \int_{D_+} X_Dh (L e^{i g}) dy =  -\int_{D_+} \grad\cdot\bigl( \frac{X_Dh\grad g}{i|\grad g|^2} \bigr)e^{ig}dy.
  \]
We are going to estimate $|\grad g(y)|$ when $a\in (\frac{N}{2M},2MN)$ and $|\theta_{\grad\phi(b)}-\theta|\geq\theta_0$. By Taylor expansion, 
\[
\grad\phi(b+R_\theta A^{-1}_a y)=\grad\phi(b)+\grad^2\phi(b^*)R_\theta A^{-1}_a y,
\]
where $b^*$ is between $b$ and $b+R_\theta A^{-1}_a y$. Notice that
\[
|\grad^2\phi(b^*)R_\theta A^{-1}_a y|\leq a^{-s}|\grad^2\phi(b^*)||y|\lesssim Ma^{-s}(\eps^{-1/m}+1)\leq\frac{\sin(\theta_0)}{2M},
\]
when $|y|\lesssim \eps^{-1/m}+1$ and $(\frac{2M^2}{\sin(\theta_0)})^{1/s}(\eps^{-1/m}+1)^{1/s}\leq a$. The latter one holds when $N\gtrsim(\eps^{-1/m}+1)^{1/(2s-t)}$ for $a\in(\frac{N}{2M},2MN)$.
So, when these conditions are satisfied, we have
\[
\grad \phi(b+R_\theta A^{-1}_a y)=\grad \phi(b)+v,
\]
with $|v|\leq\frac{\sin(\theta_0)}{2M}$. Recall the fact $|\theta_{\grad\phi(b)}-\theta|\geq \theta_0$, then it holds that
\begin{eqnarray*}
& &|A^{-1}_a R^{-1}_\theta (N\grad\phi(b+R_\theta A^{-1}_a y)-a\cdot e_\theta)|\\
&\geq&|NA^{-1}_a R^{-1}_\theta \grad\phi(b)-(a^{1-t},0)^{T}|-N|A^{-1}_a R^{-1}_\theta v|\\
&\geq&\sqrt{(r\cos\alpha-a)^2a^{-2t}+r^2a^{-2s}\sin^2\alpha}-\frac{N}{2M}\sin\theta_0a^{-s}\\
&\geq&ra^{-s}\sin\theta_0-\frac{N}{2M}\sin\theta_0a^{-s}\\
&\geq&\frac{N}{2M}\sin\theta_0a^{-s}\\
&\gtrsim&N^{1-t},
\end{eqnarray*}
where $\alpha=\theta_{\grad\phi(b)}-\theta$ and $r=|N\grad\phi(b)|\geq\frac{N}{M}$. Hence, we have \begin{equation}
\label{eqn:case2ieq1}
|\grad g(y)|\gtrsim N^{1-t}
\end{equation}
when $a\in (\frac{N}{2M},2MN)$, $|\theta_{\grad\phi(b)}-\theta|\geq\theta_0$, $N\gtrsim (\eps^{-1/m}+1)^{1/(2s-t)}$ and $y\in D_+$.

Next, we move on to estimate $\left|\frac{\grad( X_D h)\cdot\grad g}{|\grad g|^2}\right|$ and $\left| \frac{\partial^2 g}{\partial y_i\partial y_j} \frac{1}{|\grad g|^2}\right|$ for $i,j=1,2$, under the conditions that $a\in (\frac{N}{2M},2MN)$, $|\theta_{\grad\phi(b)}-\theta|\geq\theta_0$, $N\gtrsim (\eps^{-1/m}+1)^{1/(2s-t)}$ and $y\in D_+$. First,
\begin{equation}
\label{eqn:case2ieq2}
\left| \frac{\partial^2 g}{\partial y_i\partial y_j} \frac{1}{|\grad g|^2}\right|\leq \frac{Na^{-2s}}{|\grad g|^2}\leq \frac{N^{1-2s}}{N^{2-2t}}=\frac{1}{N^{1-2(t-s)}}.
\end{equation}
Second, as for $\left|\frac{\grad (X_Dh)\cdot\grad g}{|\grad g|^2}\right|$, we only need to estimate $\left|\frac{(A_a^{-1}z)\cdot \grad g}{|\grad g|^2}\right|$ for the similar reason in the last case.
As we have shown,
\[
\frac{(A_a^{-1}z)\cdot \grad g}{|\grad g|^2}\approx\frac{(Nz_1-a)a^{-2t}z_1+Na^{-2s}z_2^2}{(Nz_1-a)^2a^{-2t}+N^2a^{-2s}z_2^2}.
\]
If $z_1=0$, then $\left|\frac{(A_a^{-1}z)\cdot \grad g}{|\grad g|^2}\right|\approx \frac{1}{N}$.
If $z_1\neq 0$ and $\left|\frac{z_2}{z_1}\right|\gtrsim \frac{a^s}{a^t}$, then $|z_2|\gtrsim\frac{a^s}{Ma^t}$, since $|z|\geq\frac{1}{M}$. Hence,
\begin{eqnarray*}
\left|\frac{(A_a^{-1}z)\cdot \grad g}{|\grad g|^2}\right|&\lesssim&\frac{|(Nz_1-a)a^{-2t}z_1|+|Na^{-2s}z_2^2|}{N^2a^{-2s}z_2^2}\\
&\lesssim&\frac{|Nz_1-a|\cdot|z_1|}{N^2a^{2(t-s)}z_2^2}+\frac{1}{N}\\
&\lesssim&\frac{1}{Na^{t-s}|z_2|}+\frac{1}{N}\\
&\lesssim&\frac{1}{N}.
\end{eqnarray*}
If $z_1\neq 0$ and $\left|\frac{z_2}{z_1}\right|\lesssim \frac{a^s}{a^t}$, then 
\begin{eqnarray*}
\left|\frac{(A_a^{-1}z)\cdot \grad g}{|\grad g|^2}\right|&\leq&\frac{|(Nz_1-a)a^{-2t}z_1|+|Na^{-2s}z_2^2|}{|\grad g|^2}\\
&\lesssim&\frac{(|Nz_1|+a)a^{-2t}|z_1|+Na^{-2t}z_1^2}{N^{2-2t}}\\
&\lesssim&\frac{1}{N}.
\end{eqnarray*}
In sum,
\[
\left|\frac{(A_a^{-1}z)\cdot \grad g}{|\grad g|^2}\right|\lesssim\frac{1}{N},
\]
which implies that
\begin{equation}
\label{eqn:case2ieq3}
\left|\frac{\grad (X_Dh)\cdot \grad g}{|\grad g|^2}\right|\lesssim\frac{1}{N^{1-t}}.
\end{equation}
By \eqref{eqn:case2ieq2} and \eqref{eqn:case2ieq3}, we have
\[
\left|\int_{D_+} \grad\cdot\bigl( \frac{X_Dh\grad g}{i|\grad g|^2} \bigr)e^{ig}dy\right| \lesssim\left|\grad\cdot\bigl( \frac{X_Dh\grad g}{i|\grad g|^2} \bigr)\right|(||X_Dw||_{L^1}+||\grad(X_Dw)||_{L^1})\lesssim \frac{1}{N^{1-t}}
\]
for $a\in (\frac{N}{2M},2MN)$, $|\theta_{\grad\phi(b)}-\theta|\geq\theta_0$ and $N\gtrsim (\eps^{-1/m}+1)^{1/(2s-t)}$. So, 
\[
W_f(a,\theta,b)=a^{-\frac{s+t}{2}}O(\eps),
\] 
when $a\in (\frac{N}{2M},2MN)$, $|\theta_{\grad\phi(b)}-\theta|\geq\theta_0$ and
\[
N\gtrsim\max\{(\eps^{\frac{-1}{m}}+1)^{\frac{1}{2s-t}},\eps^{\frac{-1}{1-t}}\}.
\].

From the discussion in the two cases above, we see that 
\[
W_f(a,\theta,b)=a^{-\frac{s+t}{2}}O(\eps),
\]
if $a\notin(\frac{N}{2M},2MN)$ or $|\theta_{\grad\phi(b)}-\theta|\geq\theta_0$, when $N$ is sufficiently large. Hence, the proof of $(2)$ when $K=1$ is done.

\textbf{Step2:} Henceforth, we move on to prove $(1)$, i.e., to discuss the approximation of $W_f(a,\theta,b)$, when $a\in(\frac{N}{2M},2MN)$ and $|\theta_{\grad \phi(b)}-\theta|<\theta_0$. Recall that 
\begin{eqnarray*}
 W_f(a,\theta,b)&=&a^{-\frac{s+t}{2}}\bigg(\int_{y\in D}f(b+R_\theta A^{-1}_a y)w(y) e^{-2\pi i a^{1-t}y_1} dy+O(\eps)\bigg).
\end{eqnarray*}
Our goal is to get the following estimate
\begin{equation}
\label{main:est3}
 W_f(a,\theta,b)=a^{-\frac{s+t}{2}}\bigg(\int_{y\in D}f(b)w(y) e^{2\pi i (N\grad\phi(b)\cdot (R_\theta A^{-1}_a y)-a^{1-t}y_1)} dy+O(\eps)\bigg),
\end{equation}
for $N$ large enough.

First, we are going to show 
\begin{equation}
\label{main:est1}
 W_f(a,\theta,b)=a^{-\frac{s+t}{2}}\bigg(\int_{y\in D}e^{-\frac{(\phi(b)-c)^2}{\sigma^2}} \alpha(b+R_\theta A^{-1}_a y)w(y) e^{2\pi i (N \phi(b+R_\theta A^{-1}_a y)-a^{1-t}y_1)} dy+O(\eps)\bigg)
\end{equation}
for sufficiently large $N$.
Taylor expansion is applied again to obtain the following three expansions.
\[
\phi(b+R_\theta A^{-1}_a y)=\phi(b)+\grad\phi(b)\cdot(R_\theta A^{-1}_a y)+\frac{1}{2}(R_\theta A^{-1}_a y)^{T}\grad^2\phi(b^*)(R_\theta A^{-1}_a y),
\]
where $b^*$ is between $b$ and $b+R_\theta A^{-1}_a y$.
\begin{eqnarray*}
& &e^{-(\phi(b+R_\theta A^{-1}_a y)-c)^2/\sigma^2}\\
&=&e^{-(\phi(b)+\grad\phi(b)\cdot(R_\theta A^{-1}_a y)+\frac{1}{2}(R_\theta A^{-1}_a y)^{T}\grad^2\phi(b^*)(R_\theta A^{-1}_a y)-c)^2/\sigma^2}\\
&=&e^{-\frac{(\phi(b)-c)^2}{\sigma^2}}+e^{-\frac{(\lambda-c)^2}{\sigma^2}}\cdot\frac{-2(\lambda-c)}{\sigma^2}\big(\grad\phi(b)\cdot(R_\theta A^{-1}_a y)+\frac{1}{2}(R_\theta A^{-1}_a y)^{T}\grad^2\phi(b^*)(R_\theta A^{-1}_a y)\big),
\end{eqnarray*}
where $\lambda\in[\phi(b),\phi(b)+\grad\phi(b)\cdot(R_\theta A^{-1}_a y)+\frac{1}{2}(R_\theta A^{-1}_a y)^{T}\grad^2\phi(b^*)(R_\theta A^{-1}_a y)]$.
\[
\alpha(b+R_\theta A^{-1}_a y)=\alpha(b)+\grad\alpha(b^{**})\cdot(R_\theta A^{-1}_a y),
\]
where $b^{**}$ is between $b$ and $b+R_\theta A^{-1}_a y$.

The above Taylor expansions help us to estimate the effect of phase function $\phi(x)$ in the Gaussian term. We claim two estimates as follows.
\[
I_1=\int_{y\in D}\left|e^{-\frac{(\lambda-c)^2}{\sigma^2}}\cdot\frac{-2(\lambda-c)}{\sigma^2}\grad\phi(b)\cdot(R_\theta A^{-1}_a y)\alpha(b+R_\theta A^{-1}_a y)w(y)\right|dy\leq O(\eps)
\]
and
\[
I_2=\int_{y\in D}\left|e^{-\frac{(\lambda-c)^2}{\sigma^2}}\cdot\frac{-2(\lambda-c)}{\sigma^2} \frac{1}{2}(R_\theta A^{-1}_a y)^{T}\grad^2\phi(b^*)(R_\theta A^{-1}_a y) \alpha(b+R_\theta A^{-1}_a y)w(y)\right|dy\leq O(\eps).
\]
Because $e^{-\frac{x^2}{\sigma^2}}\cdot \frac{|x|}{\sigma^2}\leq e^{-\frac{1}{2}}\cdot \frac{1}{\sigma\sqrt{2}}$, we know
\[
I_2\lesssim\frac{1}{\sigma}\int_{y\in D}|y|^2a^{-2s}dy\lesssim\frac{1}{\sigma}a^{-2s}\eps^{-\frac{4}{m}}<\eps,
\]
if $a\gtrsim\sigma^{-\frac{1}{2s}}\eps^{-\frac{1+\frac{4}{m}}{2s}}$, which is true when 
\begin{equation}
\label{ieq:sgm1}
N\gtrsim\eps^{-\frac{1+\frac{4}{m}}{2s-\eta}}.
\end{equation}

As for $I_1$, notice that $|\theta_{\grad\phi(b)}-\theta|<\theta_0$, then $|\theta_{R^{-1}_\theta\grad\phi(b)}|<\theta_0$. Let $\tilde{\theta}=\theta_{R^{-1}_\theta\grad\phi(b)}$ and $y=(y_1,y_2)^{T}$, then for $a\in(\frac{N}{2M},2MN)$
\begin{eqnarray*}
I_1&\lesssim&\frac{1}{\sigma}\int_{y\in D}\left|\grad\phi(b)\cdot(R_\theta A^{-1}_a y)\right|dy\\
&\lesssim&\frac{M}{\sigma}\int_{y\in D}\left|\frac{y_1}{a^t}\cos\tilde{\theta}+\frac{y_2}{a^s}\sin\tilde{\theta}\right|dy\\
&\lesssim&\frac{Md}{\sigma}\int_{y\in D}\max_{\gamma\in[0,2\pi)}\left|\frac{\cos\gamma\cos\tilde{\theta}}{a^t}+\frac{\sin\gamma\sin\tilde{\theta}}{a^s}\right|dy\\
&\lesssim&\frac{Md^3L}{\sigma},
\end{eqnarray*}
where $d\approx\eps^{-\frac{1}{m}}$ is the radius of $D$ and 
\[
L=\sqrt{\frac{\cos^2\tilde{\theta}}{a^{2t}}+\frac{\sin^2\tilde{\theta}}{a^{2s}}} \leq\sqrt{\frac{1}{a^{2t}}+\frac{\sin^2\theta_0}{a^{2s}}}\lesssim\max\{\frac{1}{a^t},\frac{|\sin\theta_0|}{a^s}\}\lesssim N^{-t}.
\]
So 
\[
I_1\lesssim\frac{Md^3L}{\sigma}\lesssim\frac{Md^3N^{-t}}{\sigma}\lesssim O(\eps),
\]
if 
\begin{equation}
\label{ieq:sgm2}
N\gtrsim \eps^{-\frac{1+\frac{3}{m}}{t-\eta}}.
\end{equation}
A direct result of the estimate of $I_1$ and $I_2$ is \eqref{main:est1} for  
\begin{equation}
\label{main:bound1}
N\gtrsim \max\{\eps^{-\frac{1+\frac{3}{m}}{t-\eta}},\eps^{-\frac{1+\frac{4}{m}}{2s-\eta}} \}.
\end{equation}

Second, we need to show 
\begin{equation}
\label{main:est2}
 W_f(a,\theta,b)=a^{-\frac{s+t}{2}}\bigg(\int_{y\in D}e^{-\frac{(\phi(b)-c)^2}{\sigma^2}} \alpha(b)w(y) e^{2\pi i (N \phi(b+R_\theta A^{-1}_a y)-a^{1-t}y_1)} dy+O(\eps)\bigg),
\end{equation}
which relies on the analysis of the effect of $\phi(x)$ on $\alpha(x)$ as follows. Since $a\in (\frac{N}{2M},2MN)$, then
\begin{eqnarray*}
I_3&=&\int_{y\in D}e^{-\frac{(\phi(b)-c)^2}{\sigma^2}}\left|\grad\alpha\cdot(R_\theta A^{-1}_a y)w(y)\right|dy\\
&\lesssim&\int_{y\in D}\left|R_\theta A^{-1}_a y\right|dy\\
&\lesssim&a^{-s}\eps^{-\frac{3}{m}}\\
&\lesssim&O(\eps)
\end{eqnarray*}
holds when 
\[
N\gtrsim\eps^{-\frac{1+\frac{3}{m}}{s}}.
\]
Then we derive \eqref{main:est2} by the estimate of $I_3$ and \eqref{main:est1} for $N\gtrsim\eps^{-\frac{1+\frac{3}{m}}{s}}$.

Finally, we should estimate the non-linear effect of $\phi(x)$ on the oscillatory pattern and show \eqref{main:est3} for sufficiently large $N$. If 
\[
N\gtrsim\eps^{-\frac{(1+\frac{4}{m})}{2s-1}},
\] 
then
\begin{eqnarray*}
I_4&=&\int_{y\in D}\left|e^{2\pi i(N\phi(b)+N\grad\phi(b)\cdot(R_\theta A^{-1}_a y)-a^{1-t}y_1)}\right|\cdot \left|e^{2\pi i \frac{N}{2}(R_\theta A^{-1}_a y)^{T}\grad^2\phi(R_\theta A^{-1}_a y)}-1\right|dy\\
&\lesssim&\int_{y\in D}\left|N(R_\theta A^{-1}_a y)^{T}\grad^2\phi(R_\theta A^{-1}_a y)\right|dy\\
&\lesssim&\int_{y\in D}Na^{-2s}|y|^2dy\\
&\lesssim&Na^{-2s}\eps^{-\frac{4}{m}}\\
&\lesssim&O(\eps)
\end{eqnarray*}
holds by the fact that $|e^{ix}-1|\leq|x|$ and $a\in(\frac{N}{2M},2MN)$. Then by \eqref{main:est2} and $I_4$, we have
\begin{eqnarray*}
W_f(a,\theta,b)&=&a^{-\frac{s+t}{2}}\bigg(f(b)\int_{y\in D}w(y) e^{2\pi i (N\grad\phi(b)\cdot (R_\theta A^{-1}_a y)-a^{1-t}y_1)} dy+O(\eps)\bigg)\\
&=&a^{-\frac{s+t}{2}}\bigg(f(b)\int_{R^2}w(y) e^{2\pi i (NA^{-1}_a R^{-1}_\theta\grad\phi(b)-(a^{1-t},0)^{T})\cdot y} dy+O(\eps)\bigg)\\
&=&a^{-\frac{s+t}{2}}\bigg(f(b)\widehat{w}\big(A^{-1}_a R^{-1}_\theta(a\cdot e_\theta-N\grad\phi(b))\big)+O(\eps)\bigg),
\end{eqnarray*}
for $a\in(\frac{N}{2M},2MN)$ and $|\theta_{\grad\phi(b)}-\theta|<\theta_0$, if $N$ is sufficiently large. This complete the proof of $(1)$ when $K=1$.

In sum, we have proved this lemma when $K=1$. The conclusion is also true for general $K$ by the linearity of general curvelet transform.
\end{proof}

To prove Theorem \ref{thm:main}, we need one more lemma which estimates $\grad_bW_f(a,\theta,b)$.

\begin{lemma}
  \label{lem:C}
  Under the assumption of the theorem, we have
  \begin{eqnarray*}
    \grad_b W_f(a,\theta,b)=a^{-\frac{s+t}{2}}\left(2\pi i N \sum_{k:\ |\theta_{\grad\phi_k(b)}-\theta|<\theta_0}\grad\phi_k(b)f_k(b)\widehat{w}\left(A^{-1}_a R^{-1}_\theta(a\cdot e_\theta-N\grad\phi(b))\right) +O(\eps)\right),
    \label{E2}
  \end{eqnarray*}
when \[(a,\theta)\in\Omega=\left\{ (a,\theta):a\in\left(\frac{N}{2M},2MN\right),\exists k\ s.t.\ \left|\theta_{\grad\phi_k(b)}-\theta\right|<\theta_0\right\}.\]
\end{lemma}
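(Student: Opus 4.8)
The plan is to mirror the proof of Lemma \ref{lem:A}, since $\grad_b W_f$ is just the general curvelet transform applied to $\grad f$. By linearity of the transform it again suffices to treat $K=1$ and sum over the (at most one, per $(a,\theta)$) relevant $k$ at the end, so write $f(x)=e^{-(\phi(x)-c)^2/\sigma^2}\alpha(x)e^{2\pi iN\phi(x)}$. I would start from the integral representation
\[
W_f(a,\theta,b)=a^{-\frac{s+t}{2}}\int_{\R^2}f(b+R_\theta A^{-1}_a y)\,w(y)\,e^{-2\pi ia^{1-t}y_1}\,dy,
\]
and differentiate under the integral sign in $b$ (justified by the Schwartz decay of $w$ and the smoothness and boundedness of $f$ and its derivatives), obtaining
\[
\grad_b W_f(a,\theta,b)=a^{-\frac{s+t}{2}}\int_{\R^2}(\grad f)(b+R_\theta A^{-1}_a y)\,w(y)\,e^{-2\pi ia^{1-t}y_1}\,dy.
\]
Next I would record the pointwise identity $\grad f=\bigl(2\pi iN\grad\phi+\frac{\grad\alpha}{\alpha}-\frac{2(\phi-c)}{\sigma^2}\grad\phi\bigr)f$, which isolates $2\pi iN\grad\phi\cdot f$ as the dominant factor and exhibits the two lower-order corrections.

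For the dominant part I would Taylor-expand $\grad\phi(b+R_\theta A^{-1}_a y)=\grad\phi(b)+\grad^2\phi(b^*)R_\theta A^{-1}_a y$, pull the constant vector $2\pi iN\grad\phi(b)$ out of the integral, and recognise the remaining integral as exactly $a^{\frac{s+t}{2}}W_f(a,\theta,b)$, to which Lemma \ref{lem:A}(1) applies. On $\Omega$ this reproduces the claimed main term $a^{-\frac{s+t}{2}}\,2\pi iN\grad\phi(b)f(b)\widehat{w}\bigl(A^{-1}_a R^{-1}_\theta(a\cdot e_\theta-N\grad\phi(b))\bigr)$. The very same case split as in Lemma \ref{lem:A} (the radial region $a\notin(\frac{N}{2M},2MN)$, the off-angle region $|\theta_{\grad\phi(b)}-\theta|\ge\theta_0$, and the stationary region) carries over, because multiplying the integrand by the smooth, slowly varying factor $2\pi iN\grad\phi(b+R_\theta A^{-1}_a y)$ neither moves the stationary point of the phase $g$ nor affects the lower bounds $|\grad g|\gtrsim N^{1-t}$ established there; note also that differentiating this extra factor in $y$ produces $\grad^2\phi\cdot R_\theta A^{-1}_a=O(a^{-s})$, which is harmless.

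The main obstacle is bookkeeping the factor of $N$ that differentiation brings down onto the phase: every quantity that was $O(\eps)$ in Lemma \ref{lem:A} is now multiplied by as much as $N$, and the envelope-derivative term $\frac{2(\phi-c)}{\sigma^2}\grad\phi\,f$ is genuinely of size $O(\sigma^{-1})=O(N^{\eta})$ rather than $O(1)$. I would control these as follows. In the non-stationary zones, where $|\grad g|\gtrsim N^{1-t}$, I would integrate by parts $L$ times instead of once, using the rapid decay of $w$; by the Hessian bounds of Lemma \ref{lem:A} each step gains a negative power of $N$ (namely $N^{-\min(1-t,\,1-2(t-s))}$, which is a genuine negative power since $\frac12<s<t<1$ forces $t-s<\frac12$), so choosing $L$ large beats the single factor $N$ and still returns $O(\eps)$. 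In the stationary zone the Taylor remainder of $\grad\phi$ contributes $2\pi iN\cdot O(a^{-s}|y|)$ and the envelope-derivative term contributes $O(N^{\eta})$; both are handled by reusing the bound $e^{-x^2/\sigma^2}|x|/\sigma^2\le e^{-1/2}/(\sigma\sqrt2)$ together with the spatial smallness $a^{-s}\sim N^{-s}$, exactly as in the $I_1$--$I_4$ estimates, where the inequalities $\eta<t<2s$ and $s<t$ make these pieces of strictly lower order in $N$ than the $O(N)$ main term. This is the delicate point, and the place to be careful about what $O(\eps)$ means: the error is not a literal fixed tolerance but is $o(N)$ relative to the main term, which is precisely what is needed downstream, since in Theorem \ref{thm:main} the estimate enters only through $v_f=\grad_b W_f/(2\pi iW_f)$ on $R_{f,\eps}$, where dividing by $|W_f|\gtrsim a^{-\frac{s+t}{2}}\sqrt\eps$ converts the $o(N)$ error into the asserted relative error $O(\sqrt\eps)$.
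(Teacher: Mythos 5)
Your proposal is correct, and it reaches the lemma by a genuinely different computation than the paper. The paper never moves the gradient onto $f$: it differentiates the curvelet kernel itself, writing $\grad_b W_f(a,\theta,b)=a^{-\frac{s+t}{2}}\int f(b+R_\theta A_a^{-1}y)\bigl((-R_\theta A_a)\grad w(y)+2\pi iae_\theta w(y)\bigr)e^{-2\pi ia^{1-t}y_1}dy$, then applies the same replacement $f(b+R_\theta A_a^{-1}y)\approx f(b)e^{2\pi iN\grad\phi(b)\cdot R_\theta A_a^{-1}y}$ already established in Lemma \ref{lem:A}, and extracts the main term from the exact Fourier identity $\int\bigl((-R_\theta A_a)\grad w(y)+2\pi iae_\theta w(y)\bigr)e^{-2\pi i\zeta\cdot y}dy=2\pi i\bigl(ae_\theta-R_\theta A_a\zeta\bigr)\widehat{w}(\zeta)$, which at $\zeta=A_a^{-1}R_\theta^{-1}(a\cdot e_\theta-N\grad\phi(b))$ collapses to $2\pi iN\grad\phi(b)\widehat{w}(\zeta)$. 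In particular, the envelope-derivative term $-\frac{2(\phi-c)}{\sigma^2}\grad\phi\,f$ and the amplitude term $\frac{\grad\alpha}{\alpha}f$ that you must estimate explicitly (of sizes $O(N^{\eta})$ and $O(1)$ on the essential support) never appear in the paper's computation; they are silently folded into the approximation error. What your route buys is precisely an honest accounting of the issue the paper glosses over: in the paper's key equality the Lemma \ref{lem:A} errors are integrated against $2\pi iae_\theta w(y)$ with $a\sim N$, so the error there is really $O(N\eps)$ rather than the stated absolute $O(\eps)$ — and your $O(N^{\eta})$ and $O(1)$ correction terms show that an absolute $O(\eps)$ bound is in general unattainable for $\eta>0$. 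Your closing observation is exactly the right repair and matches how the lemma is consumed: in Theorem \ref{thm:main}(ii) the numerator error $a^{-\frac{s+t}{2}}O(N\eps)$ is divided by $2\pi iW_f$ with $|W_f|\geq a^{-\frac{s+t}{2}}\sqrt{\eps}$ and compared against $|N\grad\phi_k(b)|\gtrsim N$, which still yields the asserted $O(\sqrt{\eps})$ relative accuracy (one should, as you implicitly do, enlarge $N_0(\eps)$ so that $N^{\eta-1}$ and $N^{-1}$ are below $\eps$). One caution on your multiple-integration-by-parts step: the crude per-step gain $|\grad h|/(|h||\grad g|)\lesssim N^{\eta-s}/N^{1-t}$ is not a negative power of $N$ for all admissible parameters (e.g.\ $\eta$ close to $t$ with $2t>1+s$), so at every iterate you must reuse the paper's directional estimate $\bigl|\frac{(A_a^{-1}z)\cdot\grad g}{|\grad g|^2}\bigr|\lesssim\frac{1}{N}$ rather than the pointwise bound; your quoted gain $N^{-\min(1-t,\,1-2(t-s))}$ tacitly assumes this, and with it the argument goes through.
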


\begin{proof}
  The proof is similar to the one of Lemma \ref{lem:A}. We only need to discuss the case $K=1$ and the case $K>1$ holds by the linearity of general curvelet transform. Suppose
\[
f(x)=e^{-\frac{(\phi(x)-c)^2}{\sigma^2}}\alpha(x)e^{2\pi iN\phi(x)},
\]
we have 
  \begin{eqnarray*}
   & & \grad_b W_f(a,\theta,b)\\
&=&\int_{\R^2}f(x) a^{\frac{s+t}{2}}\biggl( (-R_\theta A_a)\grad w(A_a R^{-1}_\theta (x-b)) + 2\pi ia e_\theta w(A_a R^{-1}_\theta (x-b))\biggr) e^{-2\pi ia(x-b)\cdot e_\theta} dx\\
&=& \int_{\R^2}f(b+R_\theta A^{-1}_a y)a^{-\frac{s+t}{2}}\biggl((-R_\theta A_a)\grad w(y)+2\pi iae_\theta w(y)\biggr)e^{-2\pi ia^{1-t} y_1}dy\\
&=&a^{-\frac{s+t}{2}}\bigg(f(b)\int_{\R^2}\bigl( (-R_\theta A_a)\grad w(y) +2\pi iae_\theta w(y) \bigr)e^{-2\pi i((a^{1-t},0)^{T}-NA^{-1}_a R^{-1}_\theta \grad\phi(b))\cdot y}dy+O(\eps)\bigg)\\
&=&a^{-\frac{s+t}{2}}\bigg(2\pi iN\grad\phi(b)f(b)\widehat{w}\left(A^{-1}_a R^{-1}_\theta(a\cdot e_\theta-N\grad\phi(b))\right)+O(\eps)\bigg)
  \end{eqnarray*}
for $a\in(\frac{N}{2M},2MN)$ and $|\theta_{\grad\phi(b)}-\theta|<\theta_0$, if $N$ satisfies the condition in Lemma \ref{lem:A}. Therefore, if $f$ has $K$ components, we know
\[
\grad_b W_f(a,\theta,b)=a^{-\frac{s+t}{2}}\bigg(\sum_{k:\ |\theta_{\grad\phi_k(b)}-\theta|<\theta_0}2\pi iN\grad\phi_k(b)f_k(b)\widehat{w}\left(A^{-1}_a R^{-1}_\theta(a\cdot e_\theta-N\grad\phi_k(b))\right)+O(\eps)\bigg),
\]
for $(a,\theta)\in\Omega$ and $N$ large enough.
\end{proof}


With the above two lemmas proved, it is enough to prove Theorem \ref{thm:main}.
\begin{proof}
We shall start from $(i)$. $\{Z_{f,k}:1\leq k \leq K\}$ are disjoint as soon as $f(x)$ is a superposition of well-separated components. Let $(a,\theta,b)\in R_{f,\eps}$. By Lemma \ref{lem:A}, $(a,\theta)\in \Omega$. So, we have
\[
W_f(a,\theta,b)=a^{-\frac{s+t}{2}} \left( \sum_{k:\ |\theta_{\grad\phi_k(b)}-\theta|<\theta_0} f_k(b)\widehat{w}\left(A^{-1}_a R^{-1}_\theta (a\cdot e_\theta-N\grad\phi_k(b))\right)+O(\eps) \right).
\]
Therefore, $\exists k$ such that $\widehat{w}\left(A^{-1}_a R^{-1}_\theta (a\cdot e_\theta-N\grad\phi_k(b))\right)\neq 0$. By the definition of $Z_{f,k}$, we see that $(a,\theta,b)\in Z_{f,k}$. Hence, $R_{f,\eps}\subset\cup^K_{k=1}Z_{f,k}$.

To show $(ii)$, notice that $(a,\theta,b)\in R_{f,\eps}\cup Z_{f,k}$, then
\[
W_f(a,\theta,b)=a^{-\frac{s+t}{2}} \biggl( f_k(b)\widehat{w}\left(A^{-1}_a R^{-1}_\theta (a\cdot e_\theta-N\grad\phi_k(b))\right)+O(\eps) \biggr),
\]
and 
\[
 \grad_b W_f(a,\theta,b)=a^{-\frac{s+t}{2}}\biggl(2\pi i N \grad\phi_k(b)f_k(b)\widehat{w}\left(A^{-1}_a R^{-1}_\theta(a\cdot e_\theta-N\grad\phi(b))\right) +O(\eps)\biggr).
\]
Let $g=f_k(b)\widehat{w}\left(A^{-1}_a R^{-1}_\theta(a\cdot e_\theta-N\grad\phi(b))\right)$, then
\[
v_f(a,\theta,b)=\frac{N\grad\phi_k(b)g+O(\eps)}{g+O(\eps)}.
\]
Since $|W_f(a,\theta,b)|\gtrsim\sqrt{\eps}$ for $(a,\theta,b)\in R_{f,\eps}$, then $|g|\gtrsim\sqrt{\eps}$. So 
\[
\frac{|v_f(a,\theta,b)-N\grad\phi_k(b)|}{|N\grad\phi_k(b)|}\lesssim \left|\frac{O(\eps)}{g+O(\eps)}\right|\lesssim\sqrt{\eps}.
\]
\end{proof}

The assumption $\frac{1}{2}<s<t<1$ and $\eta<t$ are essential to the proof. However, we have not arrived to a clear opinion on the optimal values of these parameters. The difference $t-s$ allows us to construct directional needle-like curvelets in order to approximate banded wave-like components or wavefronts and capture the oscillatory behavior better. When $t$ and $\eta$ approach to $1$, and $s$ gets close to $\frac{1}{2}$, we can expect that the synchrosqueezed curvelet transform can separate banded components of width approximately $O(N^{-1})$, if $m$ is large enough. On the other hand, the lower bound $s>1/2$ ensures that the support of each curvelet is sufficiently small in space so that the second order properties of the phase function (such as the curvature of wavefronts) do not affect the estimate of local wave-vectors. The upper bound $t<1$ guarantees sufficient resolution to detect different components with large wavenumbers. 

In Theorem \ref{thm:main}, although the lower bound of $N$ could be optimized, $N$ is required to be sufficiently large so that the local wave-vector can be precisely captured by synchrosqueezing. On the other hand, the local wave-vector is not well defined for low frequency component. In fact, in the presence of such component, each high oscillatory component is still squeezed into a well-separated sharpened representation in the high frequency part of Fourier domain. Therefore, the low frequency component would be identified precisely by subtracting high frequency components.

\section{Implementation of the transform}
\label{sec:algo}

In this section, we describe the discrete synchrosqueezed
curvelet transform and the mode decomposition in detail. Subsection \ref{sub:mode} has discussed the key ideas of mode decomposition by SSCT. Let us describe the whole framework now.
Suppose $f(x)$ is a superposition of several well-separated components, the mode decomposition by SSCT consists of the following steps:
\begin{enumerate}[(i)]
\item Apply the general curvelet transform to obtain $W_f(a,\theta,b)$
  and the gradient $\grad_b W_f(a,\theta,b)$;
\item Compute the local wave-vector estimate $v_f(a,\theta,b)$
  and concentrate the energy around it to get $T_f(v,b)$;
\item Separate the essential supports of the concentrated phase space energy distribution $T_f(v,b)$ into several components by clustering techniques;
\item Restrict $W_f(a,\theta,b)$ to each resulting component and reconstruct corresponding intrinsic mode functions using the dual frame.
\end{enumerate}
We first introduce a discrete implementation of the general curvelet transform in
Section \ref{sub:sct} for Step (i) and Step (iv). Clustering methods will be discussed later in Section \ref{sub:cluster}. The full discrete algorithm will then be
summarized in Section \ref{sub:full}.

\subsection{Discrete general curvelet transforms}
\label{sub:sct}

For simplicity, we consider functions that are periodic over the unit
square $[0,1)^2$ in 2D. If it is not the case, the functions will be periodized by multiplying a smooth decaying function near the boundary of $[0,1)^2$. Let
\[
X = \{ (n_1/L,n_2/L): 0 \le n_1, n_2,<L, n_1,n_2 \in \Z\}
\]
be the $L\times L$ spatial grid at which these functions are sampled.
The corresponding $L\times L$ Fourier grid is
\[
\Xi = \{ (\xi_1,\xi_2): -L/2\le \xi_1,\xi_2<L/2, \xi_1,\xi_2 \in \Z\}.
\]
For a function $f(x)\in \ell^2(X)$, the discrete forward Fourier
transform is defined by
\[
\widehat{f}(\xi) = \frac{1}{L} \sum_{x\in X} e^{-2\pi i x\cdot \xi} f(x).
\]
For a function $g(\xi)\in \ell^2(\Xi)$, the discrete inverse Fourier
transform is 
\[
\check{g}(x) = \frac{1}{L} \sum_{\xi\in \Xi} e^{2\pi i  x\cdot \xi} g(\xi).
\]
In both transforms, the factor $1/L$ ensures that these discrete transforms
are isometric between $\ell_2(X)$ and $\ell_2(\Xi)$.

\begin{figure}[ht!]
  \begin{center}
    \begin{tabular}{cc}
     \includegraphics[height=2.4in]{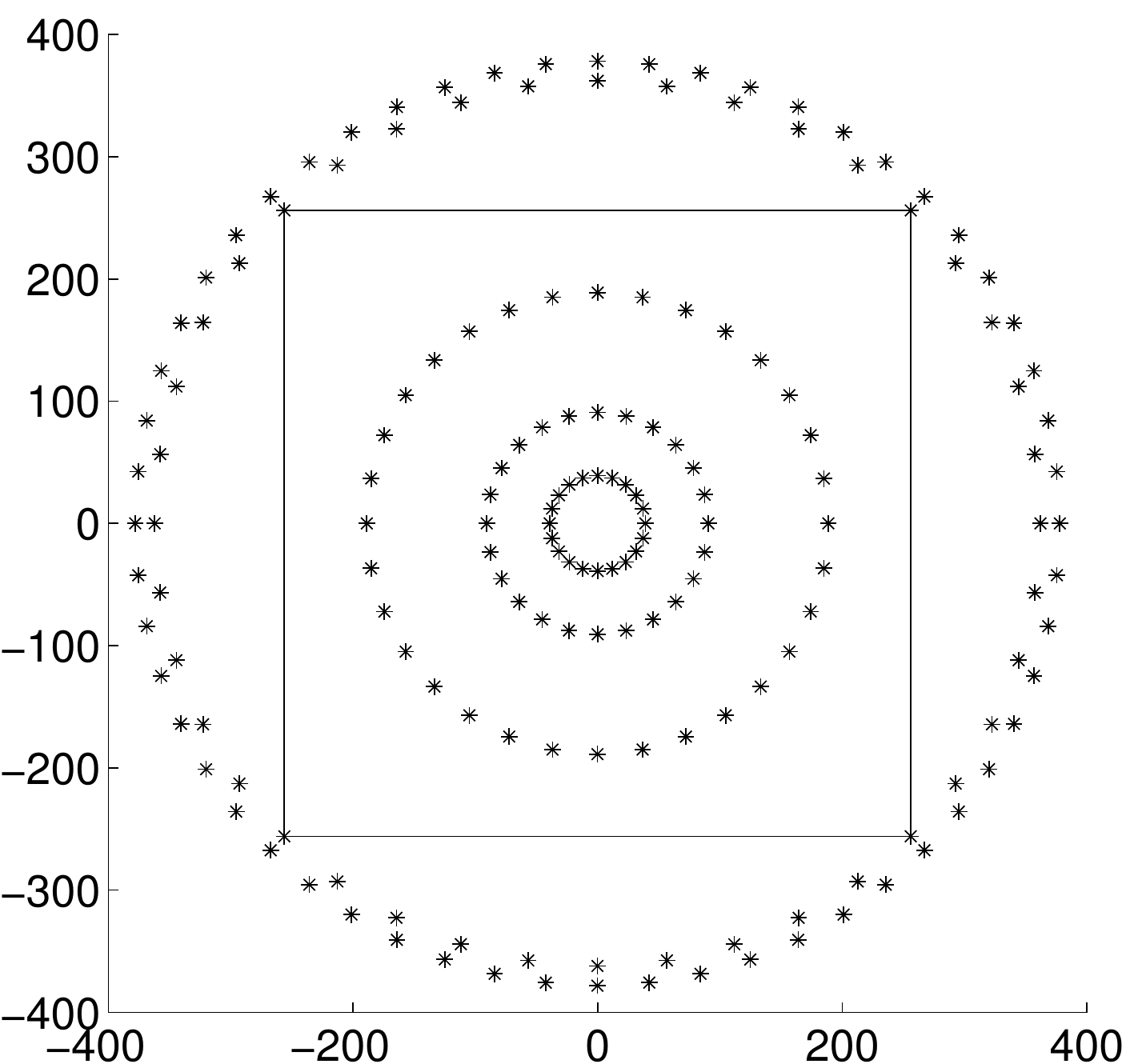} & \includegraphics[height=2.4in]{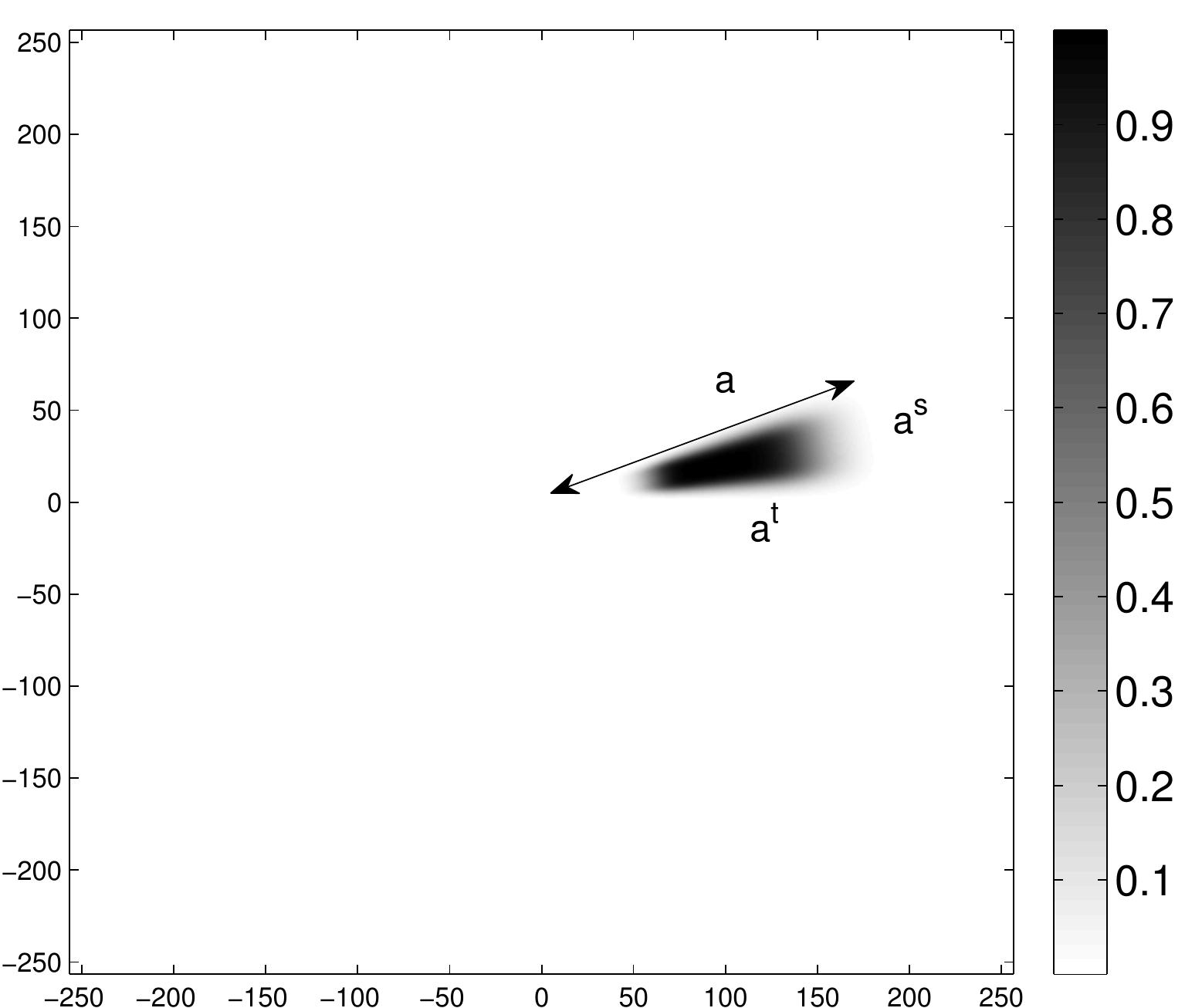}\\
    \end{tabular}
  \end{center}
  \caption{Left: Sampled point set $P$ in Fourier domain for an image of size $512\times512$. Each point represents the center of the support of a window function. The window function centered at the origin is supported on a disk and is not indicated in this picture. The size of finest scale is set to be small (e.g. $16$) in order to save memory. Right: An example of fan-shaped window function $g_{a,\theta}(\xi)$.}
  \label{fig:tiling}
\end{figure}

In order to design a discrete curvelet transform, we need to
specify how to decimate the Fourier domain $(a,\theta)$ and the position space
$b$. Let us first consider the Fourier domain $(a,\theta)$.  In the continuous
setting, the Fourier transform $\widehat{w_{a\theta b}}(\xi)$ for a fixed $(a,\theta)$ value have the profile
\begin{equation}
  \label{eqn:wnd}
  a^{-\frac{s+t}{2}} \widehat{w}(A_a^{-1}R^{-1}_\theta(\xi-a\cdot e_\theta)),
\end{equation}
modulo complex modulation. In the discrete setting, we sample the
Fourier domain $[-L/2,L/2)^2$ with a set of points $P$ (Figure \ref{fig:tiling} left) and associate
with each $(a,\theta)\in P$ a window function $g_{a,\theta}(\xi)$ (Figure \ref{fig:tiling} right) that behaves
qualitatively as $\widehat{w}(A_a^{-1}R^{-1}_\theta(\xi-a\cdot e_\theta))$. More precisely,
$g_{a,\theta}(\xi)$ is required to satisfy the following conditions:
\begin{itemize}
\item $g_{a,\theta}(\xi)$ is non-negative and centered at $a\cdot e_\theta$ with a compact fan-shaped support of length $O(a^t)$ and width $O(a^{s})$, which is approximately a directional elliptical support $\{\xi : |A_a^{-1}R^{-1}_\theta(\xi-a\cdot e_\theta)|\leq 1\}$.
\item $g_{a,\theta}(R_\theta A_a\tau + a\cdot e_\theta)$ is a sufficiently smooth function of
  $\tau$, thus making the discrete curvelets to decay rapidly in the
  spatial domain;
\item $C_1 \le \int |g_{a,\theta}(R_\theta A_a\tau + a\cdot e_\theta)|^2 d\tau \le C_2$ for positive
  constants $C_1$ and $C_2$, independent of $(a,\theta)$;
\item In addition, for any $\xi\in [-L/2,L/2)^2$, $\sum_{(a,\theta)\in P}
  |g_{a,\theta}(\xi)|^2 = 1$.
\end{itemize}
We follow the discretization and construction of frames in \cite{Candes2004} to specify the set $P$ and window functions, and refer to \cite{Candes2006} for detail implementation. The difference here is that, we do not restrict angular scaling parameter to $s=\frac{1}{2}$ and radial scaling parameter to $t=1$. This allows us to adaptively adjust the size of tiles according to data structure. In the construction of the tiling in this article, the scaling parameters $s$ and $t$ remain constant as the scale changes.

The decimation of the position space $b$ is much easier; we simply
discretize it with an $L_B\times L_B$ uniform grid as follows:
\[
B = \{ (n_1/L_B,n_2/L_B): 0 \le n_1, n_2<L_B, n_1,n_2 \in \Z\}.
\]
The only requirement is that $L_B$ is large enough so that a sampling grid of size $L_B\times L_B$ can cover the supports of all window functions.

For each fixed $(a,\theta)\in P$ and $b\in B$, the discrete curvelet, still
denoted by $w_{a\theta b}(x)$ without causing much confusion, is defined
through its Fourier transform as
\[
\widehat{w_{a\theta b}}(\xi) = \frac{1}{L_a} e^{-2\pi i b\cdot \xi} g_{a,\theta}(\xi)
\]
for $\xi \in \Xi$ with $L_a=a^{\frac{s+t}{2}}$. Applying the discrete inverse Fourier transform
provides its spatial description
\[
w_{a\theta b}(x) = \frac{1}{L\cdot L_a} \sum_{\xi\in \Xi} e^{2\pi i
  (x-b)\cdot\xi} g_{a,\theta}(\xi).
\]
For a function $f(x)$ defined on $x\in X$, the discrete curvelet
transform is a map from $\ell_2(X)$ to $\ell_2(P\times B)$, defined by
\begin{equation}
  W_f(a,\theta,b) = \langle w_{a\theta b}, f \rangle =  \langle \widehat{w_{a\theta b}}, \widehat{f} \rangle = 
  \frac{1}{L_a} \sum_{\xi\in\Xi} e^{2\pi i b\cdot\xi} g_{a,\theta}(\xi) \widehat{f}(\xi).
  \label{eqn:D1}
\end{equation}
We can introduce an inner product on the space $\ell_2(P\times B)$ as
follows: for any two functions $g(a,\theta,b)$ and $h(a,\theta,b)$,
\[
\langle g, h \rangle = \sum_{(a,\theta)\in P,b\in B} \overline{g(a,\theta,b)} h(a,\theta,b)
 .
\]
The following result shows that $\{w_{a\theta b}: (a,\theta,b)\in P\times B\}$ forms
a tight frame when equipped with this inner product.
\begin{proposition}
  For any function $f(x)$ for $x\in X$, we have
  \[
  \sum_{(a,\theta) \in P,b\in B} |W_f(a,\theta,b)|^2 \left(L_a/L_B\right)^2   = \|f\|_2^2.
  \]
\end{proposition}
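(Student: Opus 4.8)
The plan is to prove the identity by two successive Plancherel-type summations, first over the translation grid $B$ and then over the Fourier grid $\Xi$, reducing everything to the partition-of-unity property $\sum_{(a,\theta)\in P}|g_{a,\theta}(\xi)|^2=1$ of the windows together with the isometry of the discrete Fourier transform.

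First I would fix $(a,\theta)\in P$ and evaluate $\sum_{b\in B}|W_f(a,\theta,b)|^2$. Writing $G(\xi)=g_{a,\theta}(\xi)\widehat{f}(\xi)$, the definition \eqref{eqn:D1} reads $W_f(a,\theta,b)=\frac{1}{L_a}\sum_{\xi\in\Xi}e^{2\pi i b\cdot\xi}G(\xi)$, so expanding the square and interchanging the finite sums gives
\[
\sum_{b\in B}|W_f(a,\theta,b)|^2=\frac{1}{L_a^2}\sum_{\xi,\xi'\in\Xi}G(\xi)\overline{G(\xi')}\sum_{b\in B}e^{2\pi i b\cdot(\xi-\xi')}.
\]
Since $b=(n_1/L_B,n_2/L_B)$ ranges over the $L_B\times L_B$ grid $B$, the inner sum factors into two geometric sums, each equal to $L_B$ when the corresponding coordinate difference is $\equiv 0 \pmod{L_B}$ and $0$ otherwise; hence $\sum_{b\in B}e^{2\pi i b\cdot(\xi-\xi')}$ equals $L_B^2$ precisely when $\xi\equiv\xi'\pmod{L_B}$ in each component, and vanishes otherwise.

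The crux is then to discard the off-diagonal contributions, and this is the only substantive point. Here I would invoke the standing requirement that $L_B$ is large enough for an $L_B\times L_B$ grid to cover the support of every window $g_{a,\theta}$: this forces the support of $g_{a,\theta}$ to have extent strictly less than $L_B$ in each coordinate, so that two distinct points of $\Xi$ lying in that support can never be congruent modulo $L_B$. Because $G(\xi)$ is supported where $g_{a,\theta}(\xi)\neq 0$, only the diagonal terms $\xi=\xi'$ survive the aliasing condition, and I obtain
\[
\sum_{b\in B}|W_f(a,\theta,b)|^2=\frac{L_B^2}{L_a^2}\sum_{\xi\in\Xi}|g_{a,\theta}(\xi)|^2|\widehat{f}(\xi)|^2.
\]

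Finally I would multiply by $(L_a/L_B)^2$, which cancels the prefactor, and sum over $(a,\theta)\in P$, again interchanging finite sums, to reach
\[
\sum_{(a,\theta)\in P,\,b\in B}|W_f(a,\theta,b)|^2\left(L_a/L_B\right)^2=\sum_{\xi\in\Xi}\Bigl(\sum_{(a,\theta)\in P}|g_{a,\theta}(\xi)|^2\Bigr)|\widehat{f}(\xi)|^2=\sum_{\xi\in\Xi}|\widehat{f}(\xi)|^2,
\]
where the last equality uses the partition-of-unity property of the windows. An application of Parseval's identity for the isometric discrete Fourier transform, $\sum_{\xi\in\Xi}|\widehat{f}(\xi)|^2=\sum_{x\in X}|f(x)|^2=\|f\|_2^2$, then yields the claimed tight-frame equality. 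Everything apart from the aliasing step is routine bookkeeping of geometric sums and index interchanges.
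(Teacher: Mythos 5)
Your proof is correct and follows essentially the same route as the paper's: expand the definition of $W_f(a,\theta,b)$, sum over the translation grid $B$ via a discrete Plancherel identity, and then invoke the partition-of-unity property $\sum_{(a,\theta)\in P}|g_{a,\theta}(\xi)|^2=1$ together with the isometry of the discrete Fourier transform. The only difference is that you spell out the aliasing step --- that $\sum_{b\in B}e^{2\pi i b\cdot(\xi-\xi')}$ vanishes unless $\xi\equiv\xi'\pmod{L_B}$, and that the requirement that an $L_B\times L_B$ grid cover each window's support kills the off-diagonal congruent terms --- which the paper's middle equality uses implicitly.
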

\begin{proof}
  From the definition of the curvelet transform, we have
  \begin{align*}
    \sum_{(a,\theta) \in P,b\in B} |W_f(a,\theta,b)|^2 \left(L_a/L_B\right)^2   &= \sum_{(a,\theta) \in P,b\in B} \left| \sum_{\xi\in\Xi} \frac{1}{L_a}  e^{2\pi ib\cdot\xi} g_{a,\theta}(\xi) \widehat{f}(\xi) \right|^2  \left(L_a/L_B\right)^2\\
    &= \sum_{(a,\theta) \in P} \sum_{\xi\in\Xi} \left| g_{a,\theta}(\xi) \widehat{f}(\xi) \right|^2 \\
    &= \sum_{\xi\in\Xi} |\widehat{f}(\xi)|^2.
  \end{align*}
\end{proof}
For a function $h(a,\theta,b)$ in $\ell_2(P\times B)$, the transpose of the
curvelet transform is given by
\begin{equation}
  W^t_h(x) := \sum_{(a,\theta) \in P,b\in B} h(a,\theta,b) w_{a\theta b}(x)   \left(L_a/L_B\right)^2.
  \label{eqn:D2}
\end{equation}
The next result shows that this transpose operator allows us to
reconstruct $f(x),x\in X$ from its curvelet transform
$W_f(a,\theta,b),(a,\theta,b)\in P\times B$.
\begin{proposition}
  For any function $f(x)$ with $x\in X$,
  \[
  f(x) = \sum_{(a,\theta) \in P,b\in B} W_f(a,\theta,b) w_{a\theta b}(x) \left(L_a/L_B\right)^2 .
  \]
\end{proposition}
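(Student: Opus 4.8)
The plan is to substitute the explicit definitions \eqref{eqn:D1} and the spatial formula for $w_{a\theta b}(x)$ into the right-hand side and simplify. Writing it out,
\[
\sum_{(a,\theta)\in P,\, b\in B}\left(\frac{1}{L_a}\sum_{\xi\in\Xi}e^{2\pi i b\cdot\xi}g_{a,\theta}(\xi)\widehat{f}(\xi)\right)\left(\frac{1}{L\,L_a}\sum_{\xi'\in\Xi}e^{2\pi i(x-b)\cdot\xi'}g_{a,\theta}(\xi')\right)\left(\frac{L_a}{L_B}\right)^2,
\]
the powers of $L_a$ cancel exactly, leaving an overall prefactor $1/(L\,L_B^2)$. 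I would then interchange the finite sums and carry out the summation over $b$ first, since $b$ appears only through the exponential factor $e^{2\pi i b\cdot(\xi-\xi')}$.

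The crux of the argument is this $b$-sum. With $b=(n_1/L_B,n_2/L_B)$ for $0\le n_1,n_2<L_B$, it factors into two independent geometric sums, each equal to $L_B$ when the corresponding component of $\xi-\xi'$ is an integer multiple of $L_B$ and to $0$ otherwise, so that $\sum_{b\in B}e^{2\pi i b\cdot(\xi-\xi')}=L_B^2$ if $\xi\equiv\xi'\pmod{L_B}$ componentwise and $0$ otherwise. The main obstacle, and the step where the hypothesis on $L_B$ is essential, is ruling out the off-diagonal contributions with $\xi\ne\xi'$ but $\xi\equiv\xi'\pmod{L_B}$. Here I would invoke the stated requirement that $L_B$ is large enough to cover the support of every window $g_{a,\theta}$: for a fixed $(a,\theta)$ the summand vanishes unless both $\xi$ and $\xi'$ lie in $\mathrm{supp}\,g_{a,\theta}$, and since this support fits inside a box of side $L_B$, each component of $\xi-\xi'$ has absolute value strictly less than $L_B$; the congruence then forces $\xi=\xi'$. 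This collapses the double sum over $(\xi,\xi')$ onto the diagonal, and after cancelling the $L_B^2$ the expression becomes
\[
\frac{1}{L}\sum_{\xi\in\Xi}\left(\sum_{(a,\theta)\in P}|g_{a,\theta}(\xi)|^2\right)\widehat{f}(\xi)\,e^{2\pi i x\cdot\xi},
\]
using that each $g_{a,\theta}$ is real and non-negative, so $g_{a,\theta}(\xi)^2=|g_{a,\theta}(\xi)|^2$.

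Finally, I would apply the partition-of-unity condition $\sum_{(a,\theta)\in P}|g_{a,\theta}(\xi)|^2=1$ imposed on the window functions, which reduces the inner sum to $1$. What remains, $\frac{1}{L}\sum_{\xi\in\Xi}\widehat{f}(\xi)e^{2\pi i x\cdot\xi}$, is precisely the discrete inverse Fourier transform of $\widehat{f}$ evaluated at $x$, hence equal to $f(x)$ by the Fourier inversion built into the definitions of $\widehat{\,\cdot\,}$ and $\check{\,\cdot\,}$. This is the same computation underlying the preceding tight-frame Proposition, with the roles of analysis and synthesis interchanged, so no genuinely new estimate is required.
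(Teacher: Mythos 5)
Your proof is correct and takes essentially the same route as the paper's: expand both factors, carry out the geometric sum over $b$, collapse the double frequency sum to the diagonal, and invoke the normalization $\sum_{(a,\theta)\in P}|g_{a,\theta}(\xi)|^2=1$; the only cosmetic difference is that the paper verifies the identity on the Fourier side while you invert at the end. If anything, you are more careful than the paper, which merely asserts that only the term $\eta=\xi$ contributes, whereas you explicitly rule out the aliased terms $\xi\equiv\xi'\pmod{L_B}$, $\xi\neq\xi'$, using the hypothesis that an $L_B\times L_B$ grid covers the support of every window $g_{a,\theta}$.
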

\begin{proof}
  Let us consider the Fourier transform of the right hand side. It is equal to
  \begin{align*}
    & \sum_{(a,\theta) \in P,b\in B} \left( \sum_{\eta\in\Xi} \frac{1}{L_a} e^{2\pi i b\cdot\eta} g_{a,\theta}(\eta)\widehat{f}(\eta)\right) 
    \cdot\frac{1}{L_a} e^{-2\pi i b\cdot\xi} g_{a,\theta}(\xi)    \left(L_a/L_B\right)^2   \\
    =& \sum_{(a,\theta) \in P} \left(
    \sum_{\eta\in \Xi} \frac{1}{L_B^2} \left(\sum_{b\in B} e^{2\pi i b \cdot(\eta-\xi)} g_{a,\theta}(\eta) \widehat{f}(\eta)\right)
    \right) g_{a,\theta}(\xi)  \\
    =& \sum_{(a,\theta) \in P} (g_{a,\theta}(\xi))^2 \widehat{f}(\xi) = \widehat{f}(\xi),
  \end{align*}
  where the second step uses the fact that in the $\eta$ sum only the
  term with $\eta=\xi$ yields a nonzero contribution.
\end{proof}

Let us now turn to the discrete approximation of $\grad_b
W_f(a,\theta,b)$. From the continuous definition \ref{def:GCT}, we have
\[
\grad_b W_f(a,\theta,b) = \grad_b \langle \widehat{w}_{a\theta b},\widehat{f}\rangle = 
\langle -2\pi i \xi\widehat{w}_{a\theta b}(\xi),\widehat{f}(\xi) \rangle.
\]
Therefore, we define the discrete gradient $\grad_b W_f(a,\theta,b)$
in a similar way
\begin{equation}
  \grad_b W_f(a,\theta,b) = \sum_{\xi\in\Xi} \frac{1}{L_a} 2\pi i \xi e^{2\pi
    i b\cdot\xi} g_{a,\theta}(\xi) \widehat{f}(\xi).
  \label{eqn:D3}
\end{equation}

The above definitions give rise to fast algorithms for computing the
forward general curvelet transform, its transpose, and the discrete
gradient operator. All three algorithms heavily rely on the fast
Fourier transform (FFT). The detailed implementation of these fast algorithms has been discussed in \cite{SSWPT}. The computational cost of all three algorithms is $O(L^2\log L + L^{2-s-t} L_B^2 \log L_B)$ with $L_B$ large enough so that a grid of size $L_B\times L_B$ can cover the supports of all window functions. If we choose $L_B$ to be of the same order as $L^t$, the complexity of these algorithms is $O(L^{2+t-s} \log L)$.

\subsection{Clustering in the phase space}
\label{sub:cluster}
In the proof of Theorem \ref{thm:main}, the radial separation and angular separation conditions play an important role in describing the well-separated condition. Therefore, the polar coordinate is used to quantify distance in the Fourier domain, which motivates the following clustering method used in the numerical examples of this article. Before introducing the algorithm, some notations are defined below.
\begin{enumerate}
\item We associate any point $p$ in the 4D phase space with $(x_p,a_p,\theta_p)$, where $x_p$ is the projection of $p$ in the 2D spatial domain and $(a_p\cos\theta_p,a_p\sin\theta_p)$ is the projection of $p$ in the 2D Fourier domain.
\item We say that $(p,q)$ is a pair of adjacent points with parameter $(d_0,\theta_0,R_0)$, if
\begin{itemize}
\item $|x_p-x_q| \leq d_0$.
\item $|a_p-a_q|\leq R_0$. 
\item $\min\{|\theta_p-\theta_q|,2\pi-|\theta_p-\theta_q|\}\leq \theta_0$.
\end{itemize}
\item We say that a point set $S$ is a cluster with parameter $(d_0,\theta_0,R_0)$, if $\forall p_1,p_2\in S$, $\exists q_i\in S$ $i=1,\dots,n$ such that $(p_1,q_1)$, $(q_n,p_2)$ and $(q_i,q_{i+1})$ are pairs of adjacent points with parameter $(d_0,\theta_0,R_0)$ for $i=1,\dots,n-1$.
\item Two point sets $S_1$ and $S_2$ are defined to be separated with parameter $(d_0,\theta_0,R_0)$, if $\forall p\in S_1$ and $\forall q\in S_2$, $(p,q)$ is not a pair of adjacent points with parameter $(d_0,\theta_0,R_0)$.
\end{enumerate}
With the notations above, we are ready to state the polar clustering algorithm. 

\begin{algo}\label{alg:polar}
Polar clustering algorithm
\begin{algorithmic}[1]
\State Input: $S$ is the set of points to be separated. Set up a threshold distance $d_0$ for 2D spatial domain, a threshold angle $\theta_0$ and a threshold radius $R_0$ in the 2D Fourier domain. 
\State Output: Clustered point sets $S_1,\dots,S_n$.
\Function {}{} {\tt PolarCluster}{$(S,d_0,\theta_0,R_0)$}
\State Separate $S$ into $n$ clusters $S_1,\dots,S_n$ s.t. 
\State each $S_i$ is a cluster with $(d_0,\theta_0,R_0)$, 
\State and $S_i$ and $S_j$ are separated with $(d_0,\theta_0,R_0)$ for $i\neq j$.
\State \Return $\{S_1,\dots,S_n\}$
\EndFunction
\end{algorithmic}
\end{algo}

The cost of computation and memory of Algorithm \ref{alg:polar} is extremely high. Suppose the size of given data $f(x)$ is $L\times L$ and there is $K$ components with wavenumbers of $O(L)$. By Theorem \ref{thm:main}, each synchrosqueezed energy distribution $T_{f_k}(v,b)$ is surrounding its 2D wave-vector surface within a distance of $O(L\sqrt{\epsilon})$. Hence, the total number of nonzero grid points in the 4D phase space s.t. $T_f(v,b)\ge \delta$ is of order $KL^4\epsilon$, which is an impractical number for clustering. To reduce the cost, we should apply similar clustering methods first in the 2D Fourier domain at each location, which results in $O(K)$ clusters at each location. Afterward, a clustering method is applied to the point set of reduced size of $O(KL^2)$ in 4D phase space.

\subsection{Description of the full algorithm}
\label{sub:full}

With the fast discrete synchrosqueezed transforms and clustering algorithms available, we now go through the steps of the synchrosqueezed curvelet transform.

For a given function $f(x)$ defined on $x\in X$, we apply fast algorithms to compute $W_f(a,\theta,b)$ and $\grad_bW_f(a,\theta,b)$. Then the local wave-vector estimate $v_f(a,\theta,b)$ is computed by
\[
v_f(a,\theta,b) = \frac{\grad_b W_f(a,\theta,b)}{2\pi i W_f(a,\theta,b)}
\]
for $(a,\theta) \in P,b\in B$ with $W_f(a,\theta,b)\neq 0$ (indeed, $|W_f(a,\theta,b)| \ge \sqrt{\epsilon}$ in the numerical implementation).

The energy resulting in $\Re v_f(a,\theta,b)$ should be stacked up to obtain $T_f(\Re v_f(a,\theta,b),b)$. To realize this step, a two dimensional Cartesian grid of step size $\Delta$ is generated to discretize the Fourier domain of $T_f(v,b)$ in variable $v$ as follows:
\[
V = \{(n_1\Delta,n_2\Delta): n_1,n_2\in \Z\}.
\]
At each $v = (n_1\Delta,n_2\Delta) \in V$, we associate a cell $D_v$
centered at $v$
\[
D_v = 
\left[(n_1-\frac{1}{2})\Delta,(n_1+\frac{1}{2})\Delta\right) \times
  \left[(n_2-\frac{1}{2})\Delta,(n_2+\frac{1}{2})\Delta\right).
\]
Then $T_f(v,b)$ is estimated by
\[
T_f(v,b) = \sum_{(a,\theta,b): \Re v_f(a,\theta,b)\in D_v }| W_f(a,\theta,b)|^2  \left(L_a/L_B\right)^2.
\]

Suppose that $f(x)$ is a superposition of $K$ well-separated banded intrinsic
mode functions:
\[
f(x) = \sum_{k=1}^K f_k(x) = \sum_{k=1}^K e^{-(\phi_k(x)-c_k)^2/\sigma_k^2}\alpha_k(x) e^{2\pi i N\phi_k(x)}.
\]
In the discrete implementation, we choose a threshold parameter $\delta>0$ and define the set $S$ to be
\[
\{ (v,b): v\in V, b\in B, T_f(v,b) \ge \delta \}.
\]
After synchrosqueezing, $T_f(v,b)$ is essentially supported in the phase space near $K$ ``discrete'' surfaces $\{(N\phi_k(b),b), b\in B\}$.  Hence, under the separation condition given by Theorem \ref{thm:main}, $S$ will have $K$ well-separated clusters $U_1,\ldots,U_K$, and they would be identified by clustering methods in the last subsection. 

Once we discover $U_1,\dots,U_K$, we can define $W_{f_k}(a,\theta,b)$ by restricting $W_f(a,\theta,b)$ to
the set $\{(a,\theta,b): \Re v_f(a,\theta,b)\in U_k\}$. Then, we can recover each intrinsic mode function efficiently using the fast algorithm discussed to compute
\[
f_k(x) = \sum_{(a,\theta)\in P,b\in B} W_{f_k}(a,\theta,b) w_{a\theta b}(x)  \left(L_a/L_B\right)^2.
\]

\section{Numerical Results}
\label{sec:results}

In this section, we start with error analysis of local wave-vector estimation using synchrosqueezed curvelet transform, and compare it with synchrosqueezed wave packet transform. Afterward, some mode decomposition examples of synthetic and real data will be presented to illustrate the efficiency of proposed synchrosqueezed curvelet transform. For all the synthetic examples in this section, the size $L$ of the Cartesian grid $X$ of the discrete algorithm is $512$, the threshold value $\epsilon=10^{-4}$ for $W_f(a,\theta,b)$. The scaling parameters of synchrosqueezed curvelet transform are $t=1-\frac{1}{8}$ and $s=\frac{1}{2}+\frac{1}{8}$, as an appropriate balance as discussed previously. In the meantime, we chose $t=s=\frac{1}{2}+\frac{1}{8}$ to construct discrete synchrosqueezed wave packet transform for a reasonable comparison. In all the decomposition problems, Algorithm \ref{alg:polar} with application dependent parameters is applied and it provides desired solutions. We will only present relevant recovered components to save space.

\subsection{Instantaneous wave-vector estimation}
\label{sub:iwe}

In Theorem \ref{thm:main}, we have seen that the estimate $v_f(a,\theta,b)$ approximates the local wave-vector at $b$, if $|W_f(a,\theta,b)| \ge a^{-\frac{s+t}{2}} \sqrt{\eps}$. Since $a\ge 1$ as we discussed after the Definition \ref{def:GC2D}, it is useful to consider a simple and universal threshold criteria $|W_f(a,\theta,b)| \ge \sqrt{\eps}$, which amounts to a smaller region of the essential support of $W_f(a,\theta,b)$. In such region, though $v_f(a,\theta,b)$ provides an accurate estimate of the local wave-vector at each $b$, it is more rational to average them up to obtain a unique local wave-vector estimate for each fixed $b$. By the definition of synchrosqueezed energy distribution, $T_f(\Re v_f(a,\theta,b),b)$ truly reflects a natural weight of $v_f(a,\theta,b)$ in variables $a$ and $\theta$. Hence, we define the ${\it mean}$ local wave-vector estimate at $b$ to be
\[
v_f^m(b)=\frac{\sum_{(a,\theta)}|T_f(\Re v_f(a,\theta,b),b)|v_f(a,\theta,b)}{\sum_{(a,\theta)}|T_f(\Re v_f(a,\theta,b),b)|}.
\]
In the presence of noise, a threshold $\delta$ proportional to noise level is set up for $T_f(\Re v_f(a,\theta,b),b)$ to uncover the dominant estimate. Correspondingly, we define the thresholded ${\it mean}$ local waveform estimate as
\[
v_f^{m,\delta}(b)=\frac{\sum_{(a,\theta)\in \Omega_\delta(b)}|T_f(\Re v_f(a,\theta,b),b)|v_f(a,\theta,b)}{\sum_{(a,\theta)\in \Omega_\delta(b)}|T_f(\Re v_f(a,\theta,b),b)|},
\]
where $\Omega_\delta(b)=\{ (a,\theta):|T_f(\Re v_f(a,\theta,b),b)|\geq \delta\}$. In a noiseless case, $v_f^m(b) = v_f^{m,0}(b)$. Using this estimate, we can define the relative error $R_\delta(b)$ between $v_f^{m,\delta}(b)$ and the exact
local wave-vector $N \grad\phi(b)$ as
\[
R_\delta(b)=\frac{|v_f^{m,\delta}(b)-N\grad\phi(b)|}{|N\grad\phi(b)|}.
\]

\paragraph{Example 1.} We test the accuracy for a noise free deformed plane wave $f(x)=\alpha(x)e^{2\pi i N\phi(x)}$ with $\alpha(x)=1$, $\phi(x)=\phi(x_1,x_2) = x_1 + (1-x_2) +
0.1 \sin(2\pi x_1) + 0.1 \sin(2\pi (1-x_2))$, and $N=135$ (see Figure \ref{fig:err_full} left). It is a special case in Definition \ref{def:IMTF} with banded parameter $\sigma=\infty$. The relative error $R^0(b)$ of SSCT shown in Figure \ref{fig:err_full} (middle) is of order $10^{-2}$, which agrees with Theorem \ref{thm:main} on that the relative approximation error is of order $O(\sqrt{\eps})$. The synchrosqueezed wave packet transform and the synchrosqueezed curvelet transform share the same accuracy in this case shown by Figure \ref{fig:err_full} middle and right.

\begin{figure}[ht!]
  \begin{center}
    \begin{tabular}{ccc}
      \includegraphics[height=1.6in]{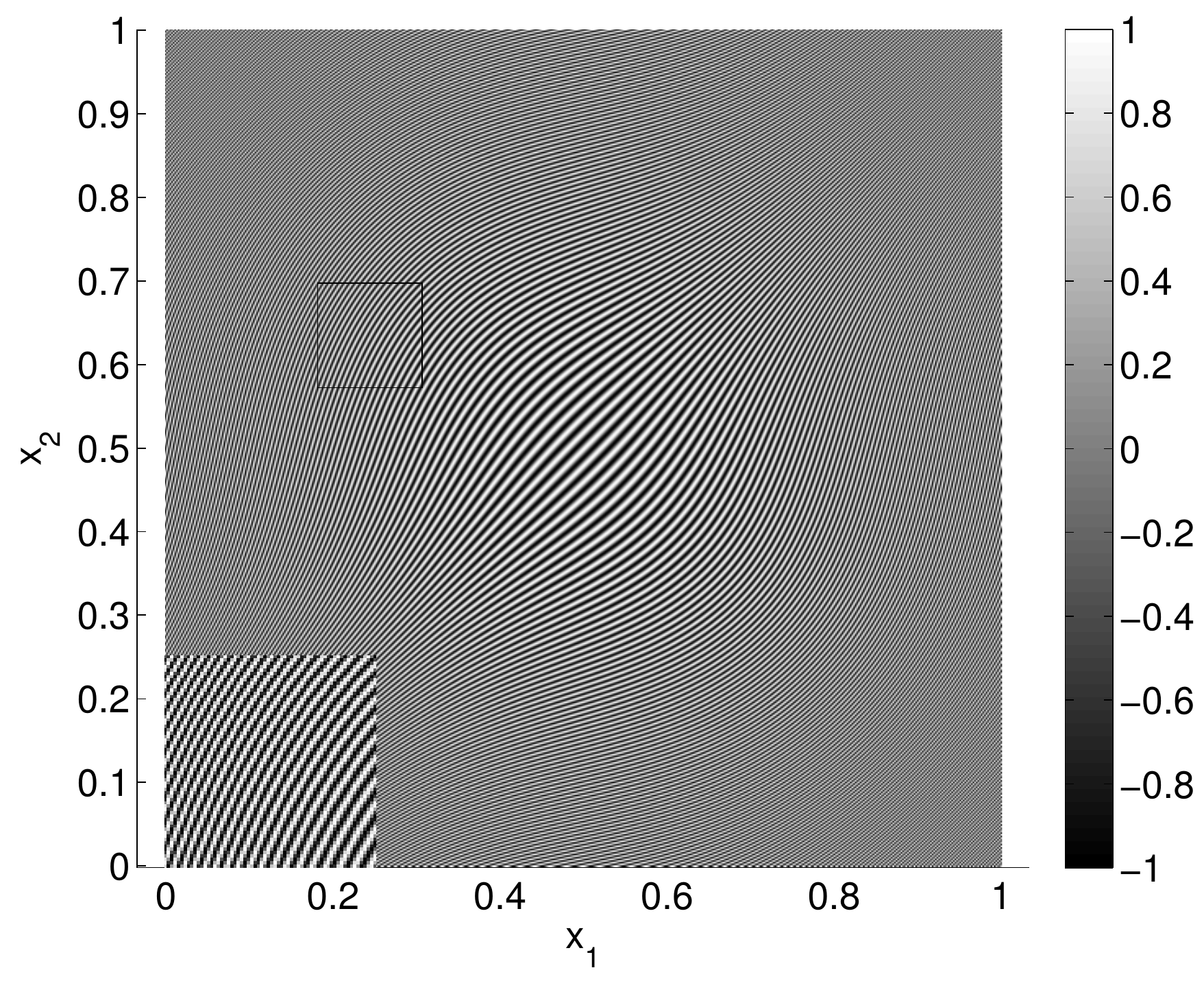} & \includegraphics[height=1.6in]{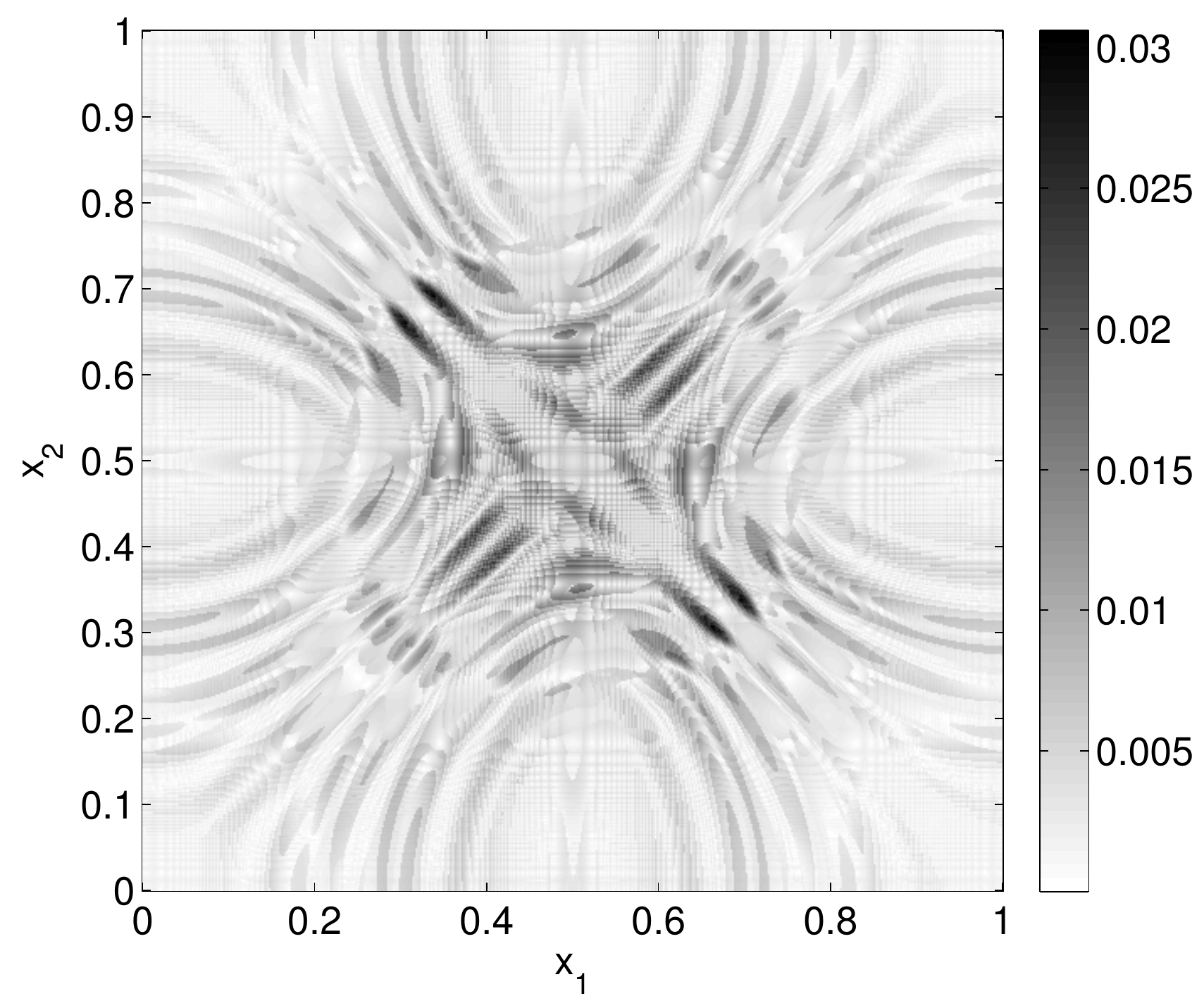} & \includegraphics[height=1.6in]{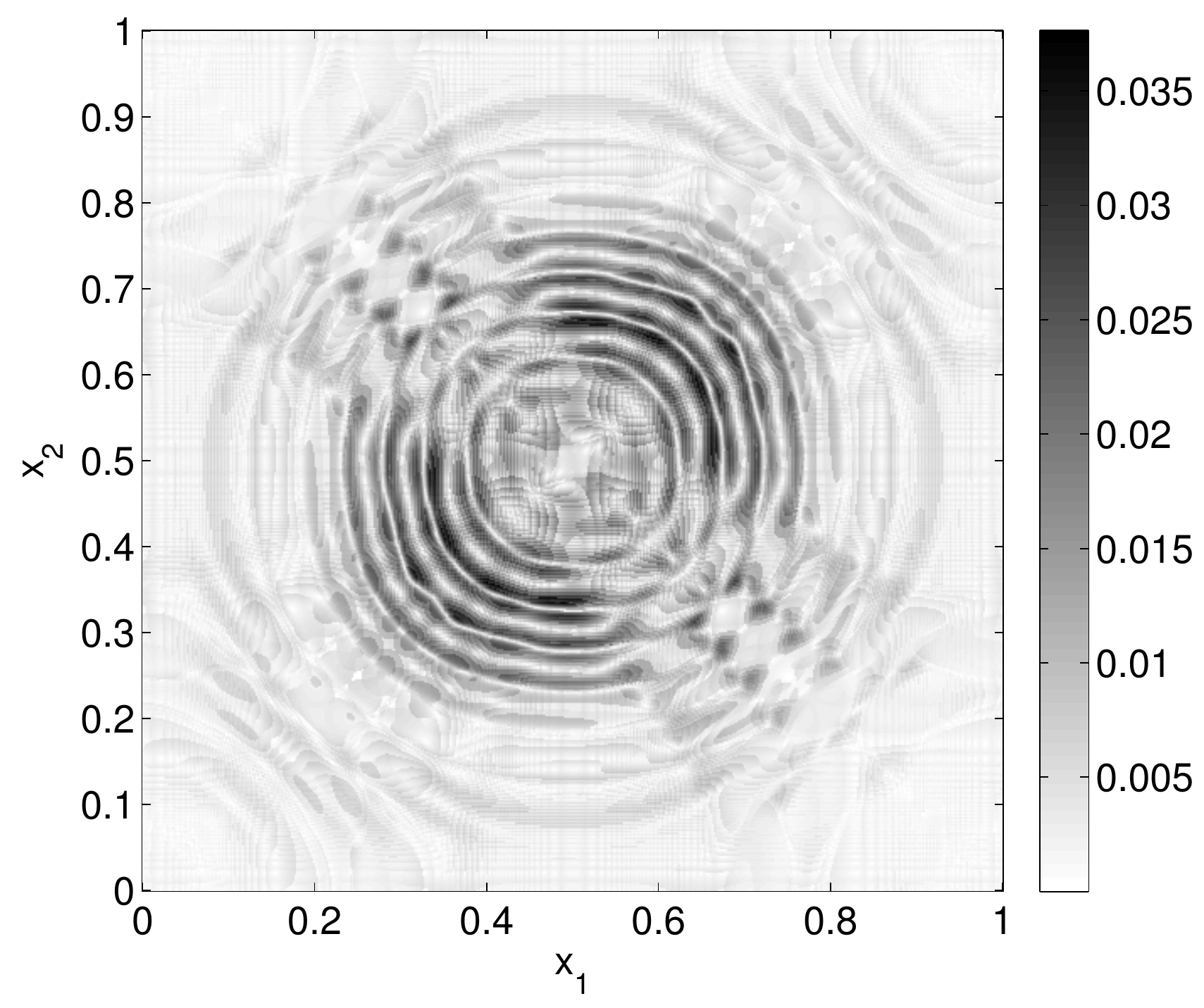}\\
    \end{tabular}
  \end{center}
  \caption{Left: A deformed plane wave propagating in the full space with zoomed-in data indicated by a rectangle. Middle: Relative error $R^0(b)$ of local
    wave-vector estimation using SSCT. Right: Relative error $R^0(b)$ of local wave-vector estimation given by SSWPT.}
  \label{fig:err_full}
\end{figure}

We compare the efficiency of SSCT and SSWPT in a noiseless case of a banded deformed plane wave $f(x)=e^{-(\phi(x)-c)^2/\sigma^2}\alpha(x)e^{2\pi i N\phi(x)}$ with the same parameters in last example and two more parameters $c=0.7$ and $\sigma=\frac{4}{135}$. As we discussed at the beginning of this subsection, $v_f(a,\theta,b)$ is only computed in the relevant region $|W_f(a,\theta,b)| \ge \sqrt{\eps}$. So, the relative error will be set to be zero elsewhere. The numerical result matches well with our theoretical prediction, showing that SSCT estimates local wave-vectors of this banded wave-like component within a relative error of order $O(\sqrt{\eps})$. However, SSWPT fails the truth as we discussed in the section of introduction.

\begin{figure}[ht!]
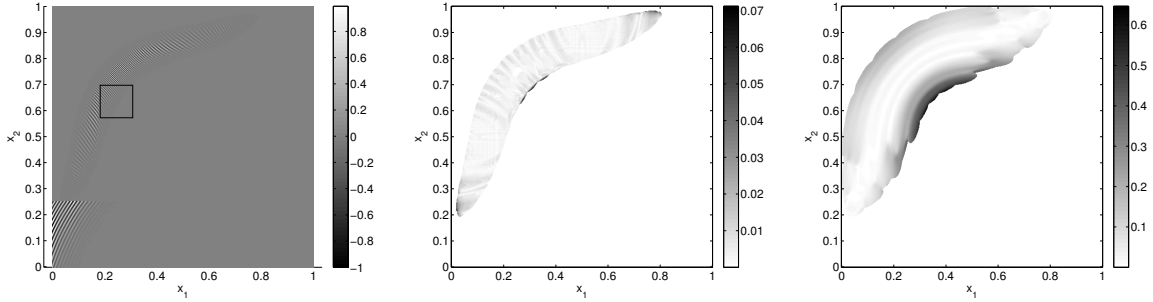

  \begin{center}
    \begin{tabular}{ccc}
      \includegraphics[height=1.6in]{error_band_data.pdf} & \includegraphics[height=1.6in]{error_band_nons_gud.pdf} & \includegraphics[height=1.6in]{error_band_nons_gud_wp.pdf}\\
    \end{tabular}
  \end{center}
  \caption{Left: A banded deformed plane wave. The zoomed-in data comes from the small rectangle. Middle: Relative error $R^0(b)$ of local
    wave-vector estimation using SSCT. Right: Relative error $R^0(b)$ of local wave-vector estimation given by SSWPT.}
  \label{fig:err_band}
\end{figure}

To quantitatively demonstrate the robustness against noise, we provide a series of tests of the above banded deformed plane wave with increasing noise levels. As usual, the noise level is described by the Signal-to-Noise Ratio ($\SNR$) defined by
\[
\SNR[dB]=10\log_{10}\bigg(\frac{\VAR f}{\sigma^2}\bigg).
\]
Suppose $n(x)$ is an isotropic complex Gaussian random noise with zero mean. We consider the noisy data  
\begin{equation}
f(x)=e^{-(\phi(x)-c)^2/\sigma^2}\alpha(x)e^{2\pi i N\phi(x)}+n(x),
\label{eq:f}
\end{equation}
with the same parameters in previous noiseless banded example. Table \ref{tab:snr} summarizes the results. The first row shows different noise levels and the second row records the threshold $\delta$ for $T_f(a,\theta,b)$. We observe that the threshold $\delta$ successfully reduces the influence of noise and keeps the local wave-vector estimate accurate and stable. 

\begin{table}
  \begin{center}
    \begin{tabular}{c||r|r|r|r|r}
      $\SNR$ & $\infty$ & 3 & 0 & -3 & -6 \\ \hline 
      $\delta$ & 0 & 3.5 & 4 & 4.5 & 5 \\ \hline 
      $|R_\delta(b)|_{\ell^\infty}$ & 0.03 & 0.03 & 0.03 & 0.045 & 0.06\\
    \end{tabular}
  \end{center}
  \caption{Maximum relative error of $R_\delta(b)$ with different $\SNR$.}
  \label{tab:snr}
\end{table}

\subsection{Intrinsic mode decomposition for synthetic data}
\paragraph{Example 2.} In many applications, it is desired to extract each component from a superposition. To show that our algorithm may provide a solution, we present some numerical examples of mode decomposition for highly oscillatory synthetic seismic data in noiseless and noisy cases (see Figure \ref{fig:EX2} top). Figure \ref{fig:EX2} shows the results of the application of our algorithm described in Section \ref{sub:full}. On the left is a noiseless example and the example on the right has some noise ($\SNR$ is $-3.07$ dB). Each mode of given data is accurately recovered in the noiseless case. In the noisy case, different modes with different propagation characters are completely separated. Each recovered mode practically reflects the curvature of corresponding mode in the original data, though there is some energy loss due to threshold $\delta$  to remove noise.

\begin{figure}[ht!]
  \begin{center}
    \begin{tabular}{cc}
      \includegraphics[height=2.4in]{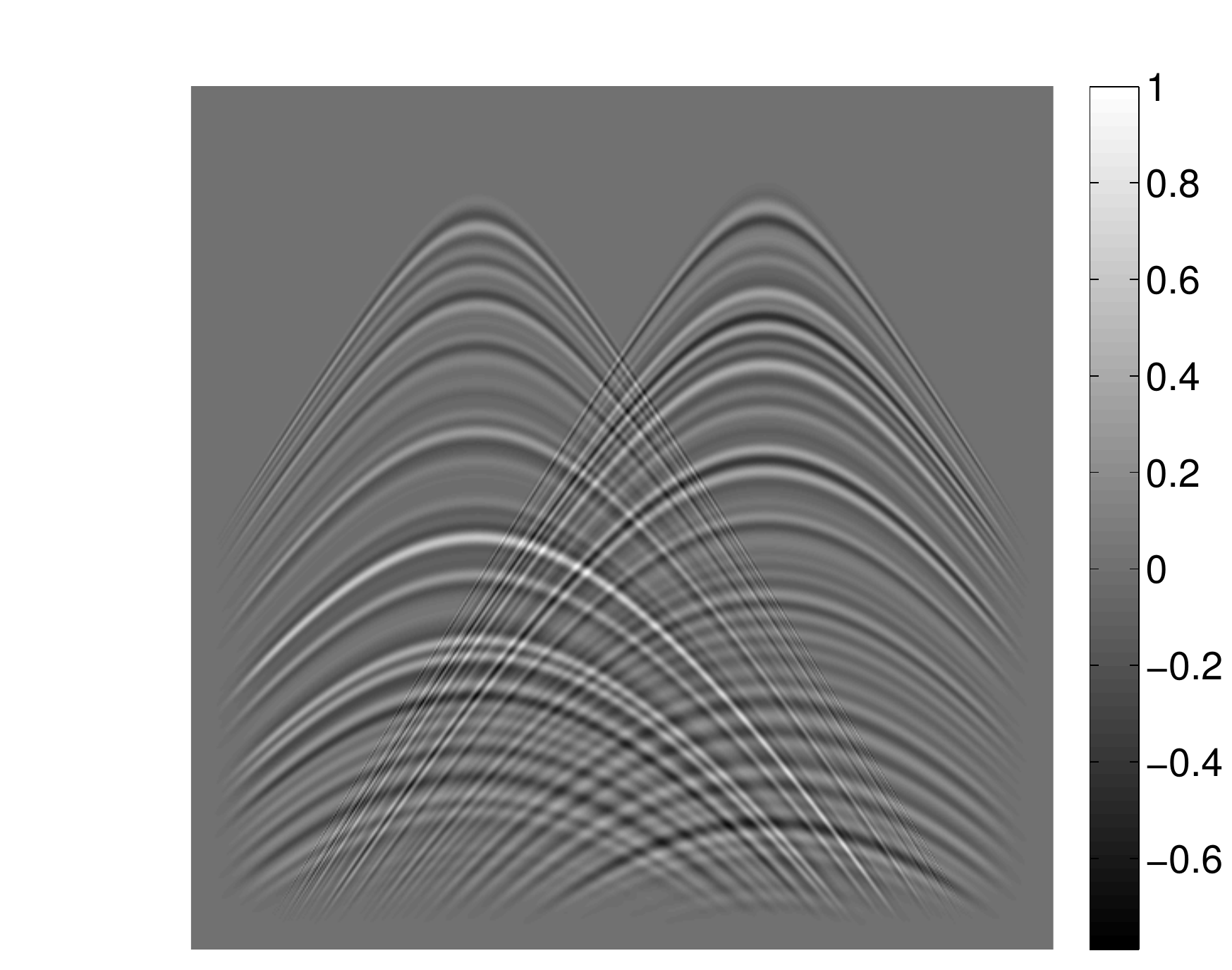}&        \includegraphics[height=2.4in]{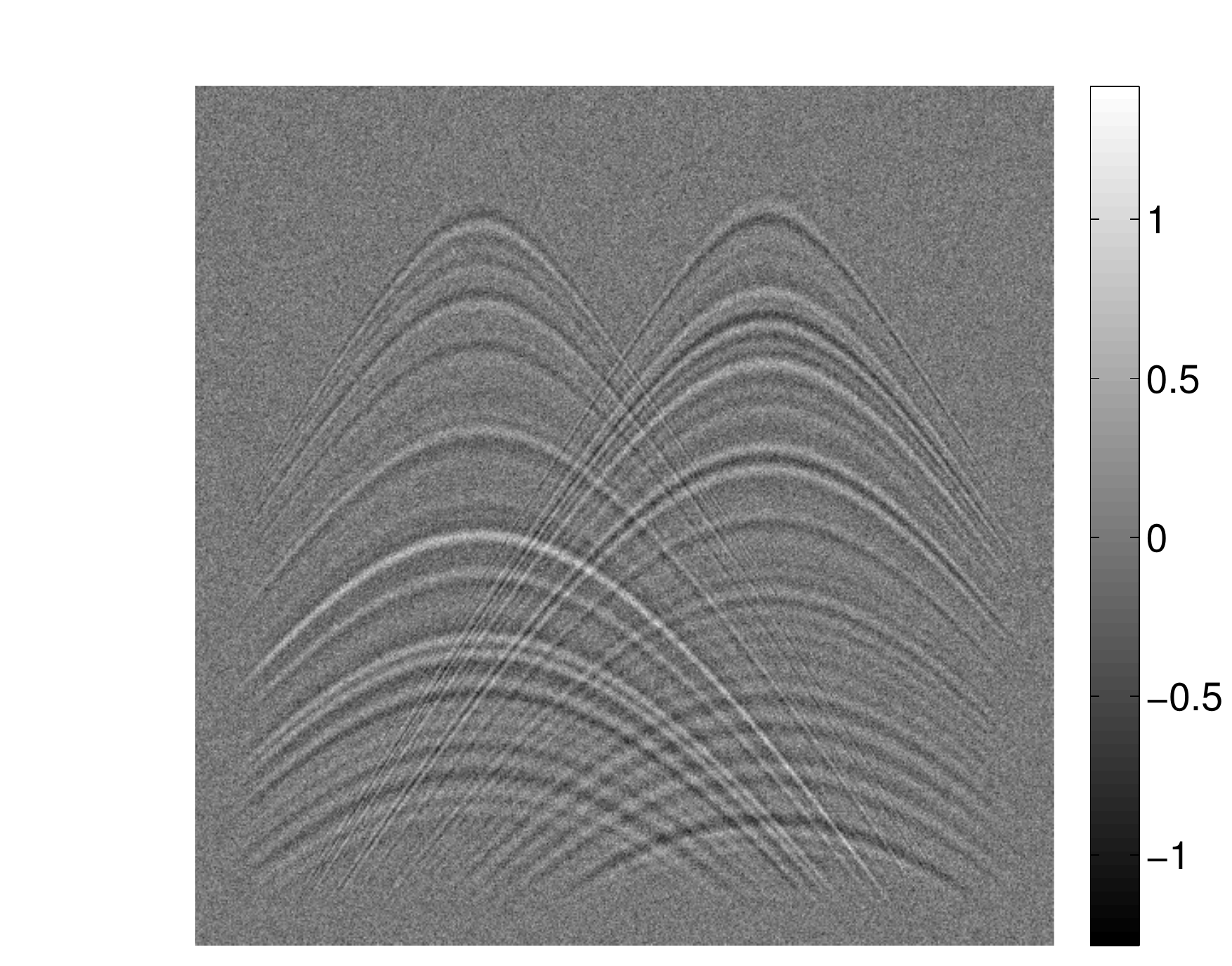}\\
      \includegraphics[height=2.4in]{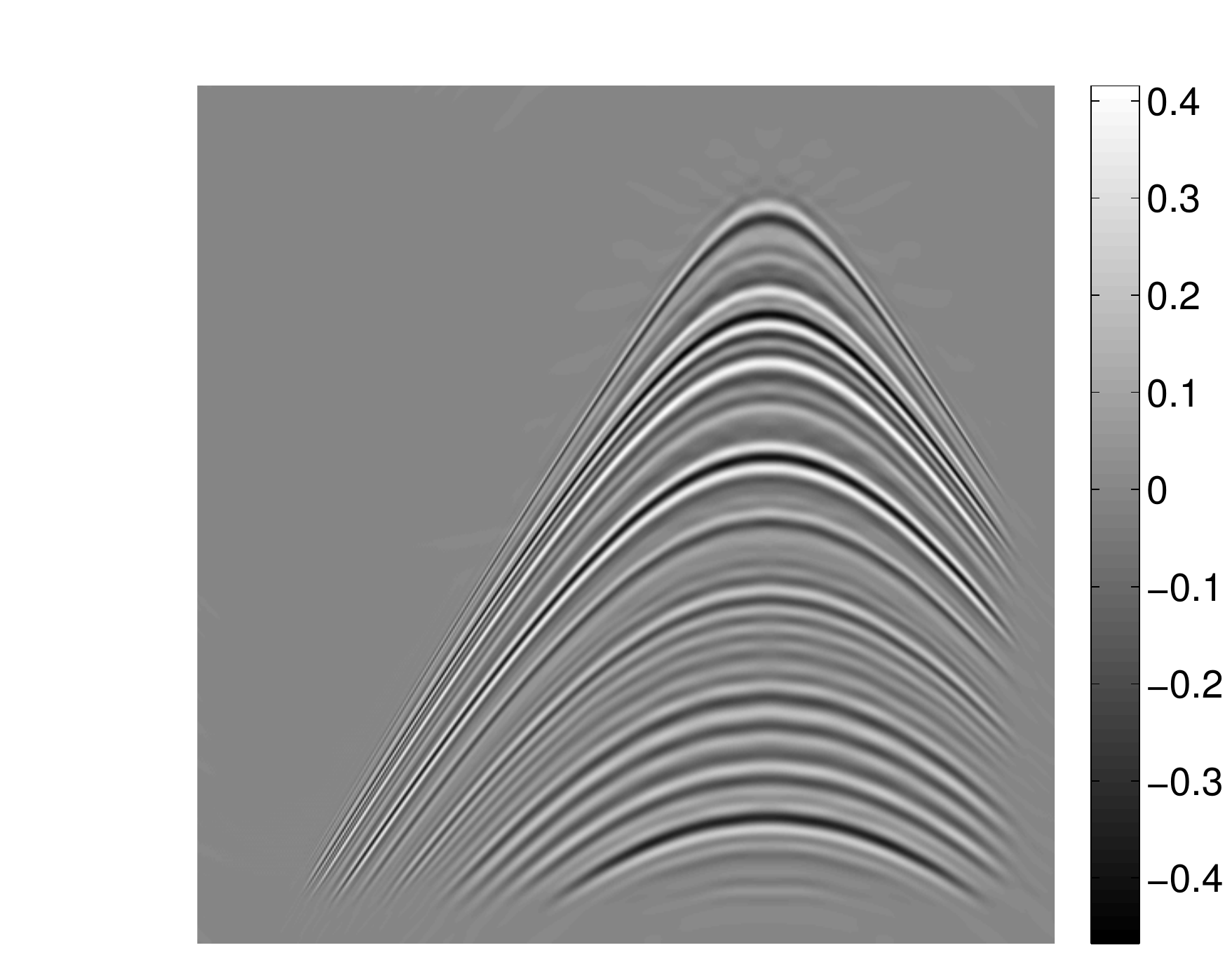}&        \includegraphics[height=2.4in]{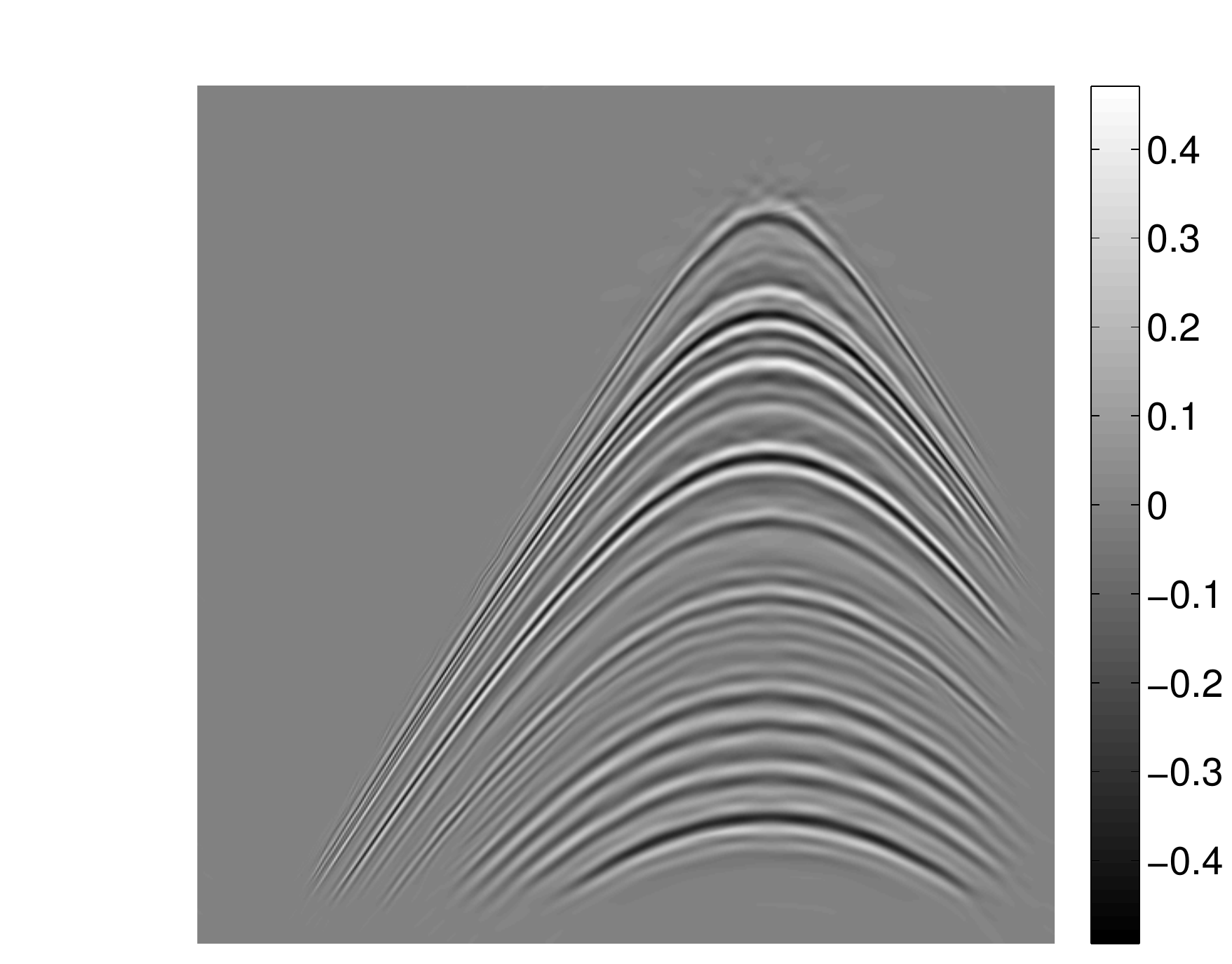}\\
      \includegraphics[height=2.4in]{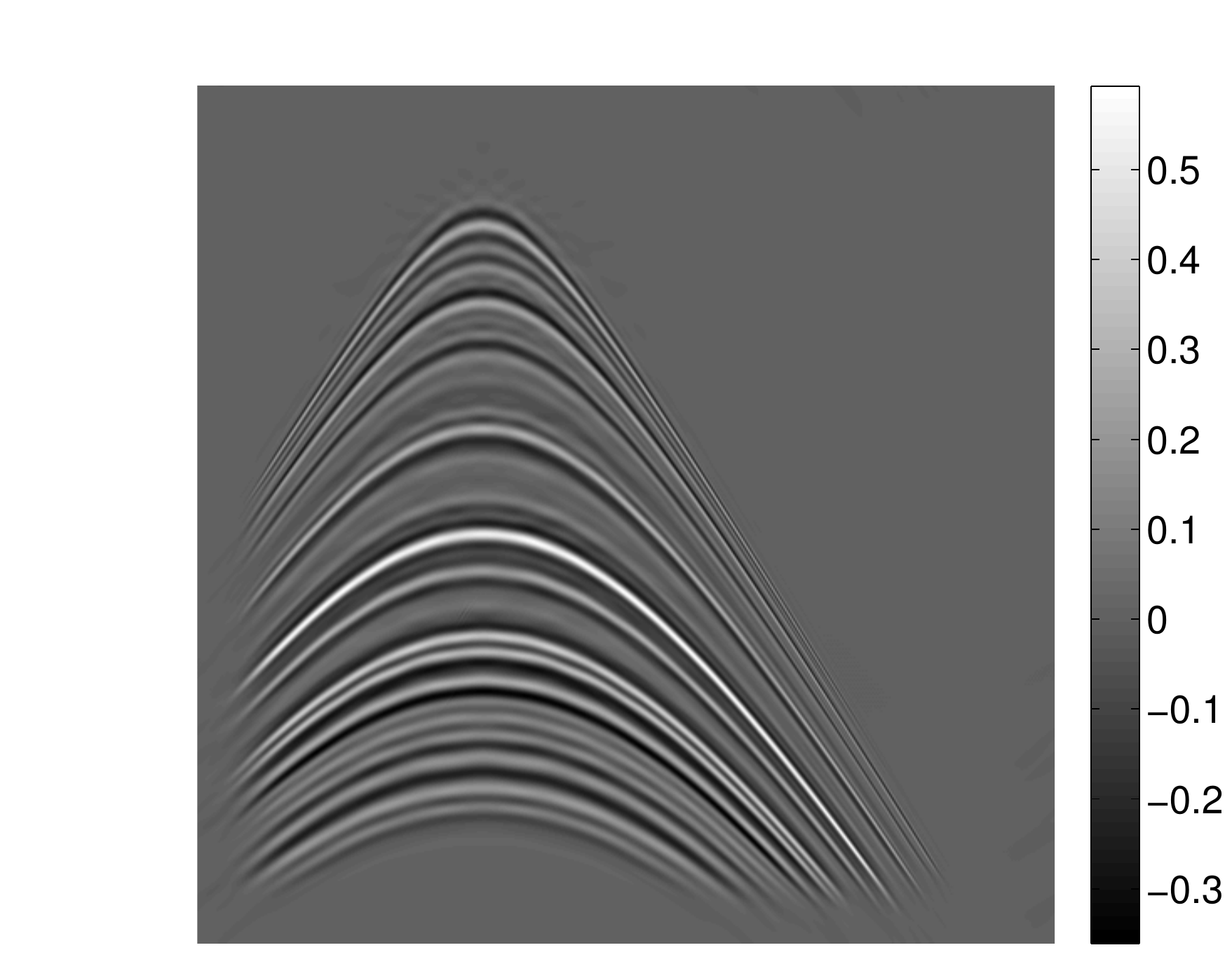}&        \includegraphics[height=2.4in]{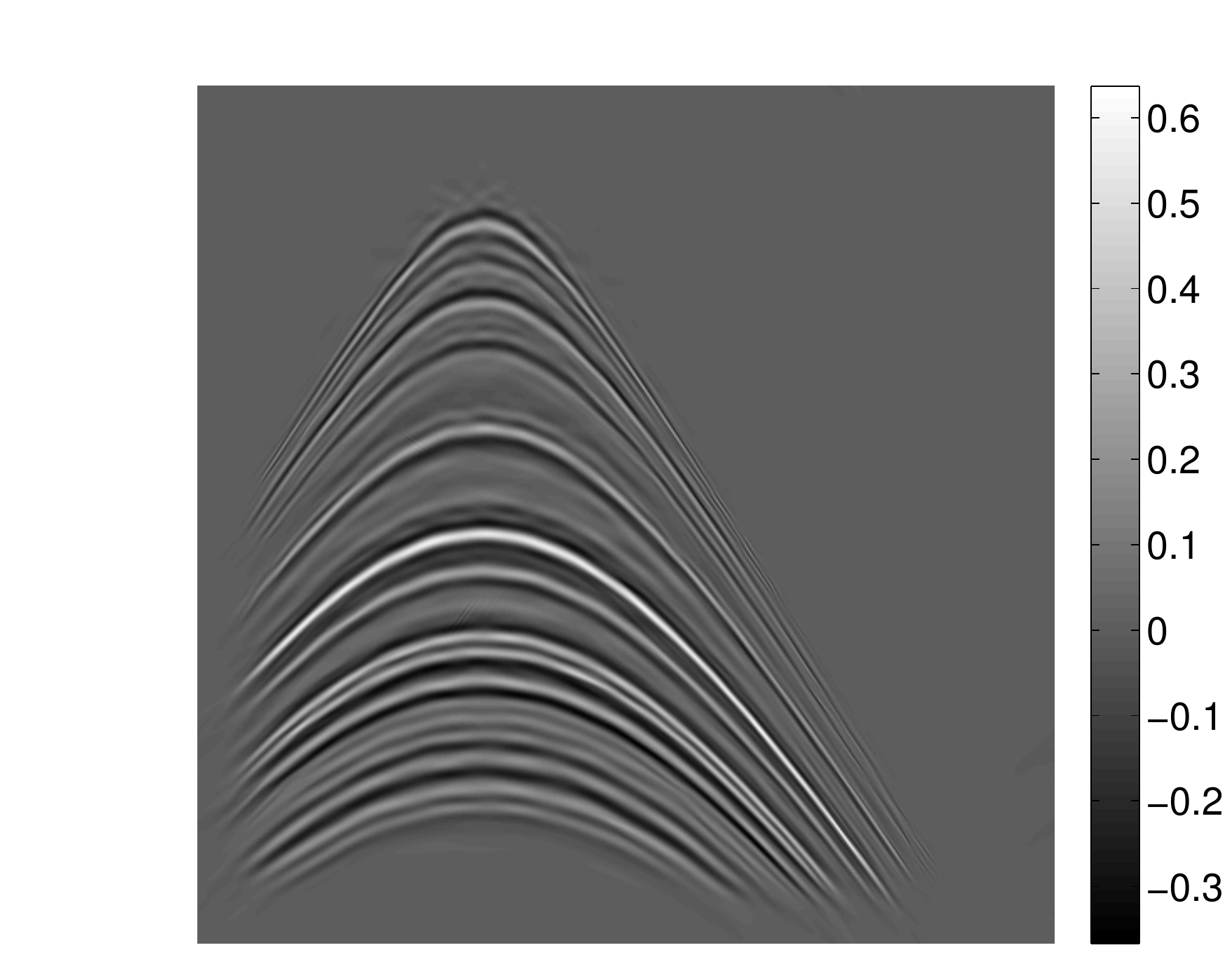}
    \end{tabular}
  \end{center}
  \caption{Example 2. Left: Mode decomposition without noise.  Right: Mode decomposition with noise ($\SNR=-3.07$). Top: A
    superposition of two components. Second row: The first recovered relevant mode. Third row: The second recovered relevant mode.}
  \label{fig:EX2}
\end{figure}

\paragraph{Example 3.} In some other applications, one component might be disrupted (e.g. randomly shifted in this example), and it is required to remove such component and recover others. Here we randomly shift the first mode in Example 2 in the vertical direction and apply our algorithm to recover the second mode. The numerical results summarized in Figure \ref{fig:EX3} show the capability of our algorithm to solve such a problem with or without noise. In this problem, the disrupted component can be considered as noise with high energy, i.e., this is a problem with very small $\SNR$. It is even more problematic that random shifting may create some texture similar to the mode to be recovered in some region. Fortunately, the synchrosqueezed representation is so concentrated that the resolution is still good enough to separate the mode from such similar texture by appropriately thresholding $T_f(a,\theta,b)$. 

The left example in Figure \ref{fig:EX3} shows the result of noiseless data. The recovered mode looks almost the same as the one recovered in noiseless Example 2 (Figure \ref{fig:EX2} bottom left), except some energy loss due to thresholding. It is of interest to add some background noise to see how well our algorithm is performing. Figure \ref{fig:EX3} right shows the result of noisy case. $\SNR$ is  $-0.90$, if we consider the energy of disrupted component as part of data energy. The result (see Figure \ref{fig:EX3} bottom right) is almost identical with the recovered mode in Figure \ref{fig:EX2} bottom left.

\begin{figure}[ht!]
  \begin{center}
    \begin{tabular}{cc}
      \includegraphics[height=2.4in]{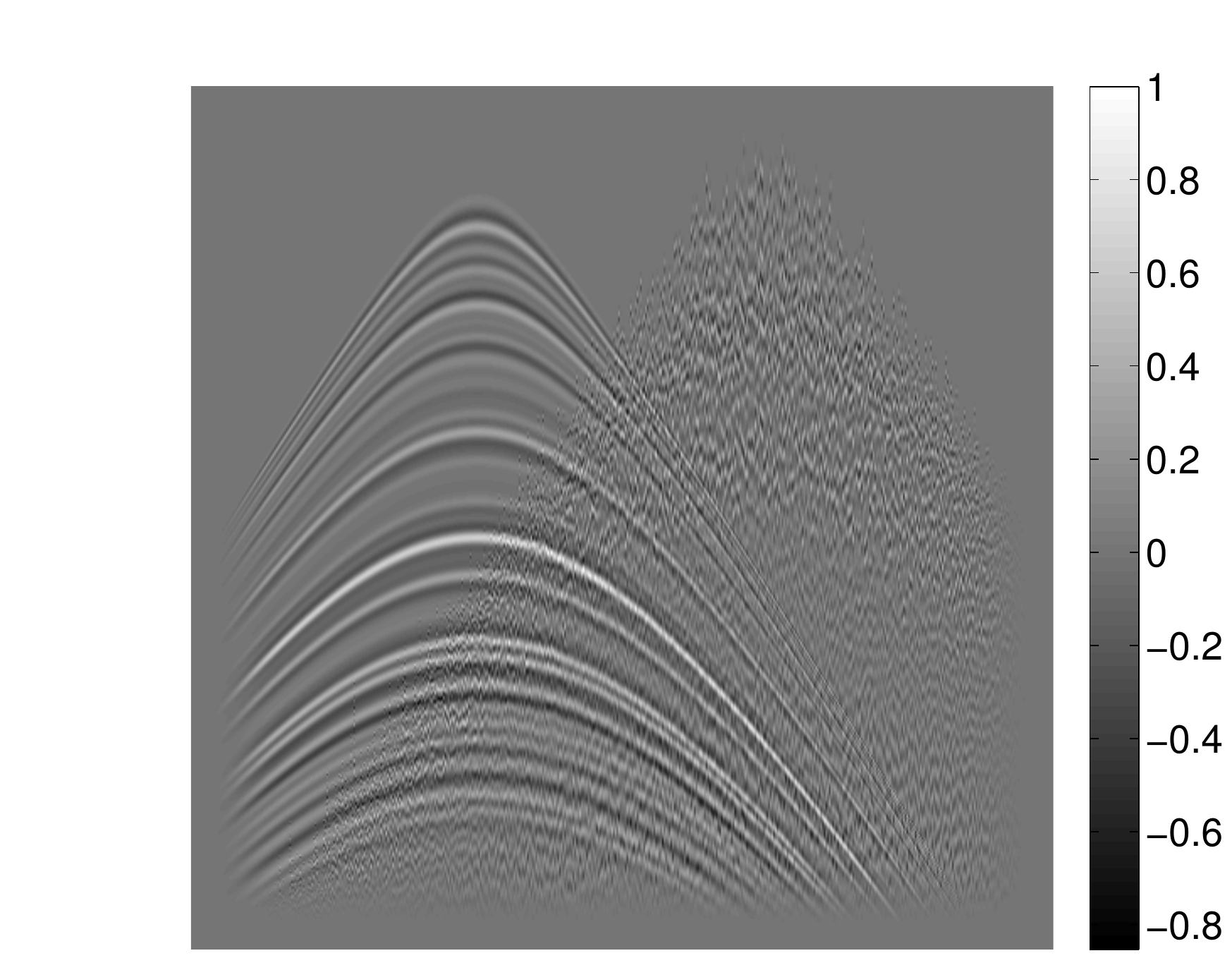}&        \includegraphics[height=2.4in]{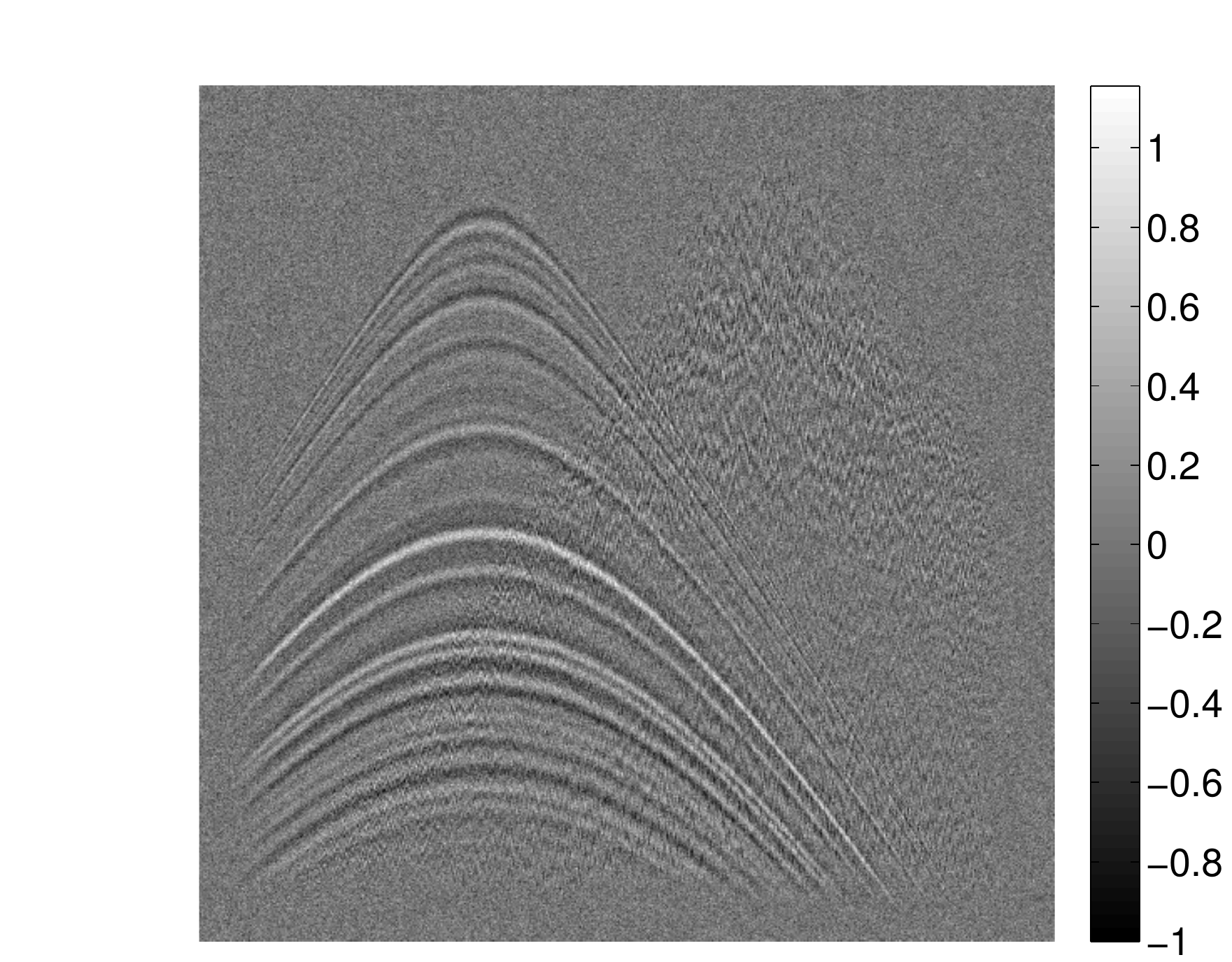}\\
      \includegraphics[height=2.4in]{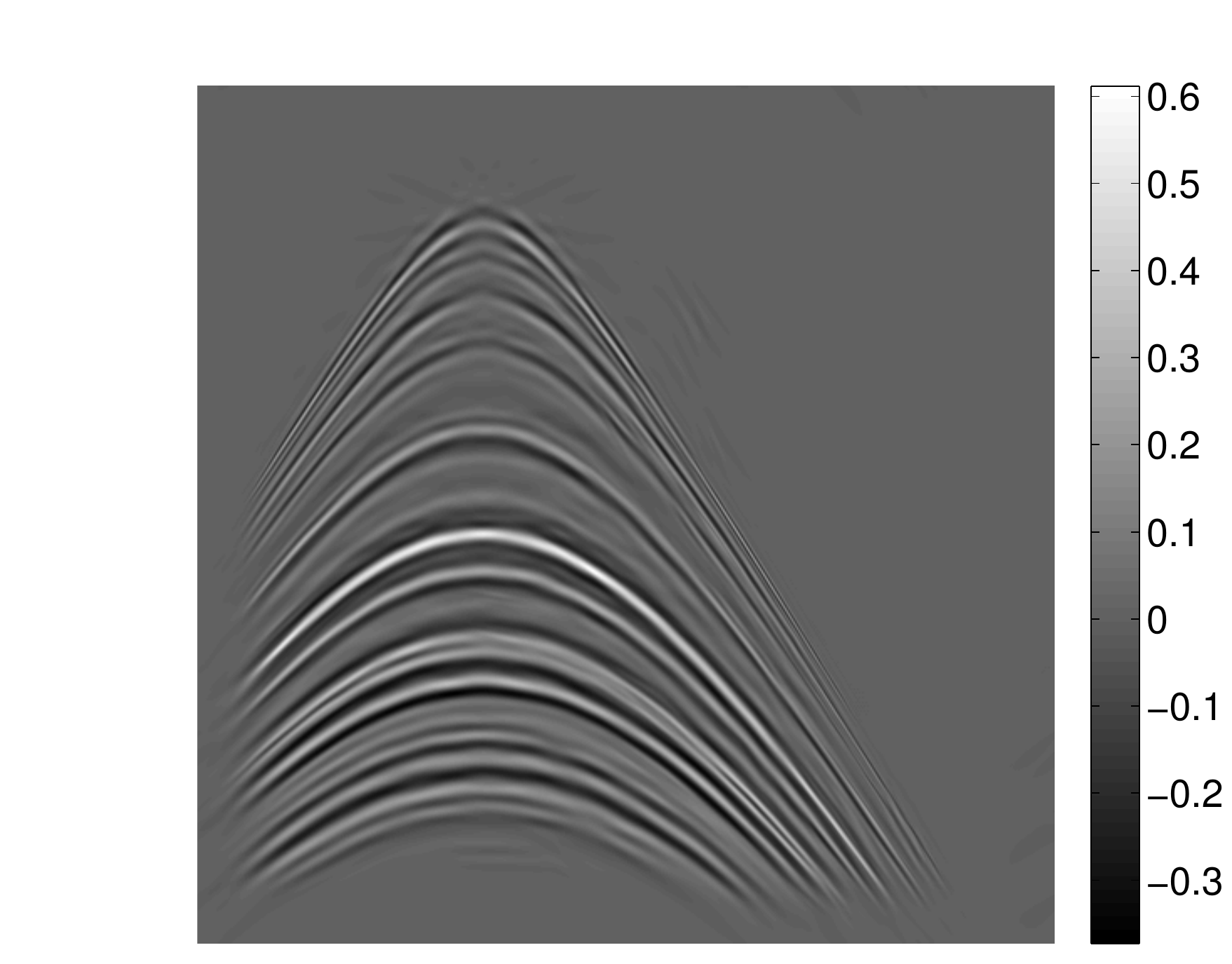}&        \includegraphics[height=2.4in]{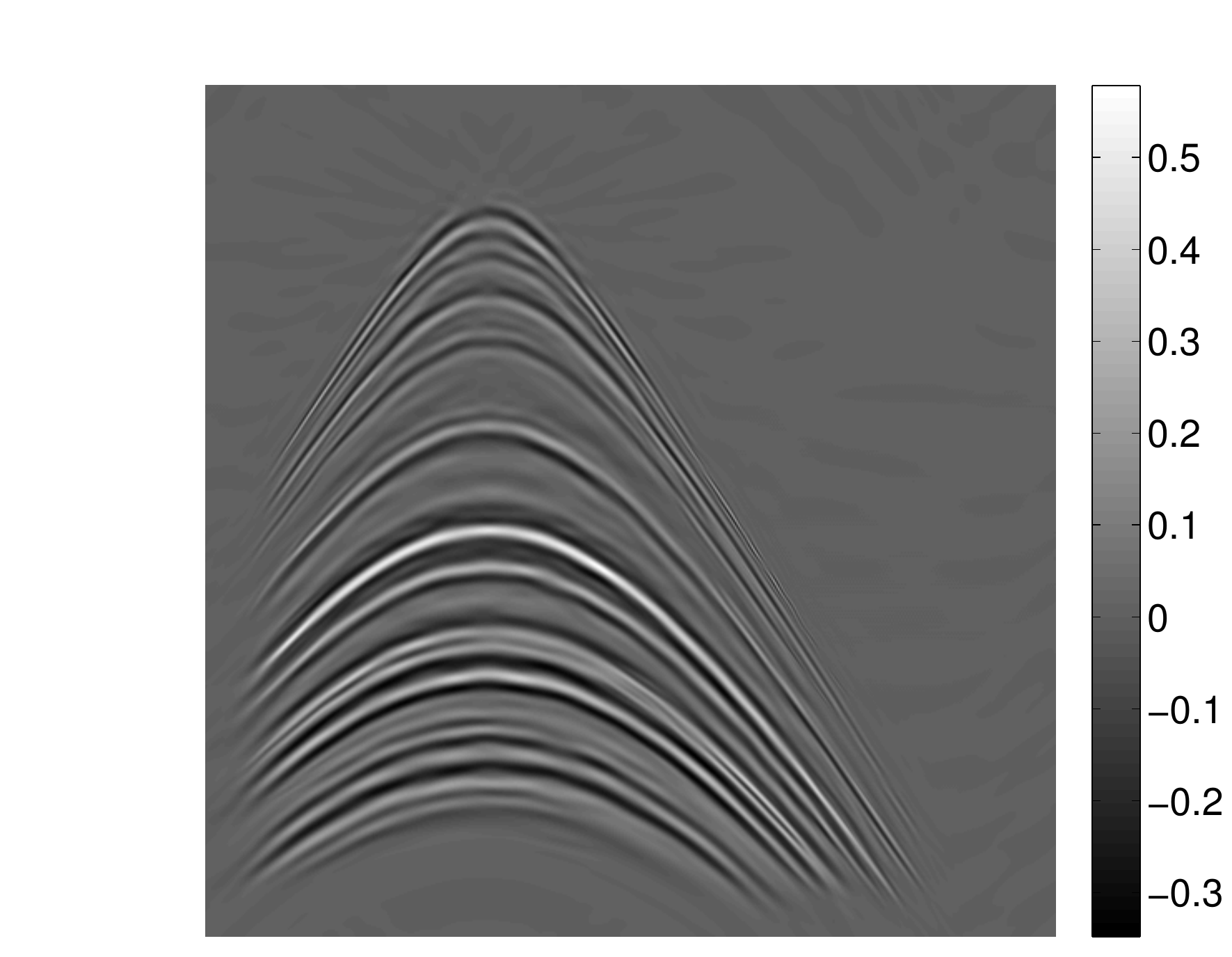}
    \end{tabular}
  \end{center}
  \caption{Example 3. Left: Mode identification without noise.  Right: Mode identification with noise ($\SNR=-0.90$). Top: A
    superposition of two components, one of which is disrupted by random shifting and need to be removed. Second row: The recovered relevant mode.}
  \label{fig:EX3}
\end{figure}

\subsection{Intrinsic mode decomposition for real data}
So far, the experiments shown are idealized, e.g., the boundary of each component is clear and smooth, and the amplitudes of each component are of the same level. In this subsection, we apply the synchrosqueezed curvelet transform to real seismic data and illustrate its good performance in complicated circumstance.

\paragraph{Example 4.} This is real seismic data with four main components and a band of energy loss near the bottom. The centered component is overlapping with others. Components in the bottom left and bottom right corners have irregular boundaries and not well aligned textures. The component on top has obviously weaker energy than others. These characters cause large difficulty in identifying all these components accurately. As shown in Figure \ref{fig:EX4}, the main textures and oscillatory patterns are recognized and recovered by our algorithm, though there is some loss of energy on the boundary of each component caused by thresholding.

\begin{figure}[ht!]
  \begin{center}
    \begin{tabular}{c}
      \includegraphics[height=3.5in]{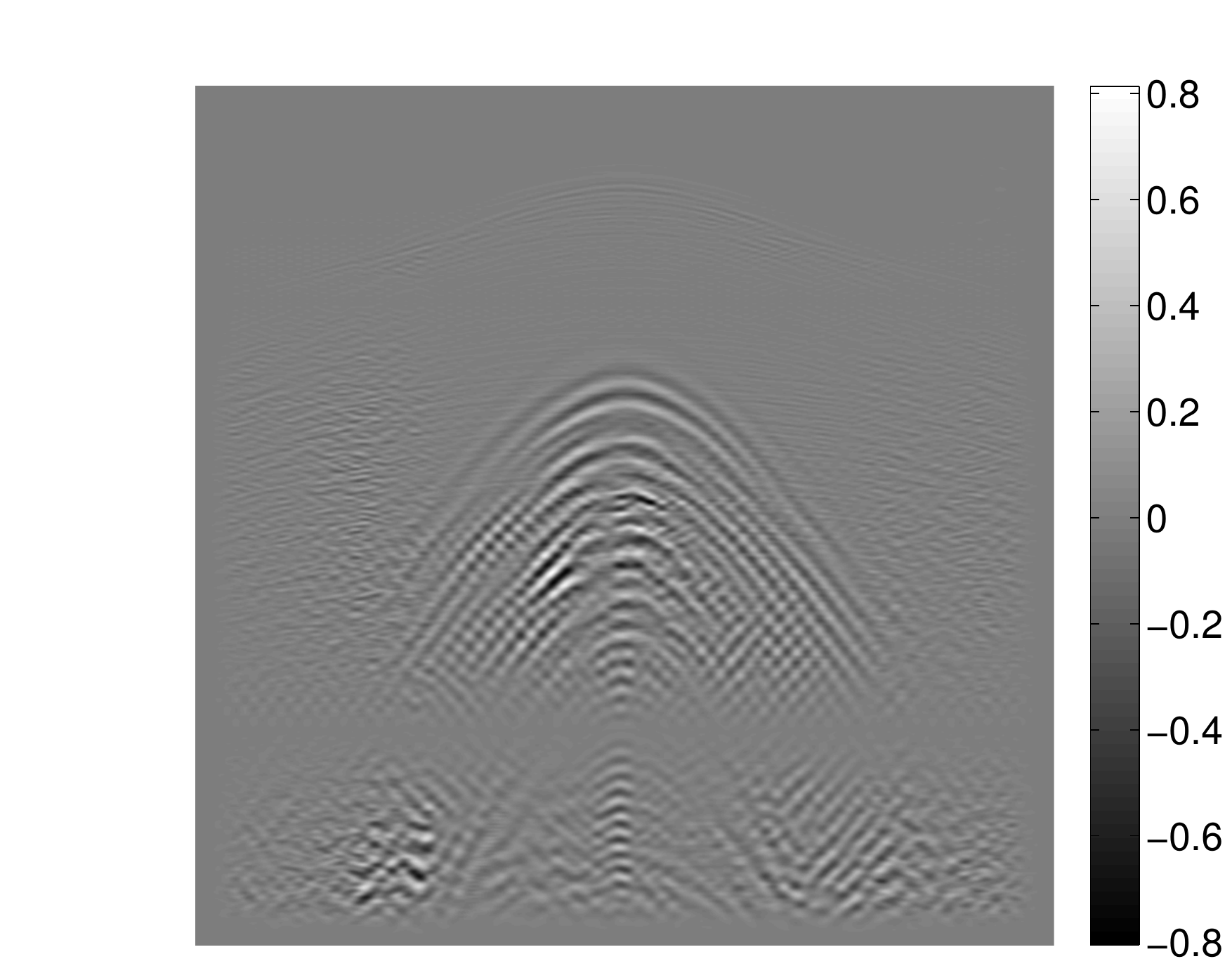}
    \end{tabular}
    \begin{tabular}{cc}
      \includegraphics[height=2.4in]{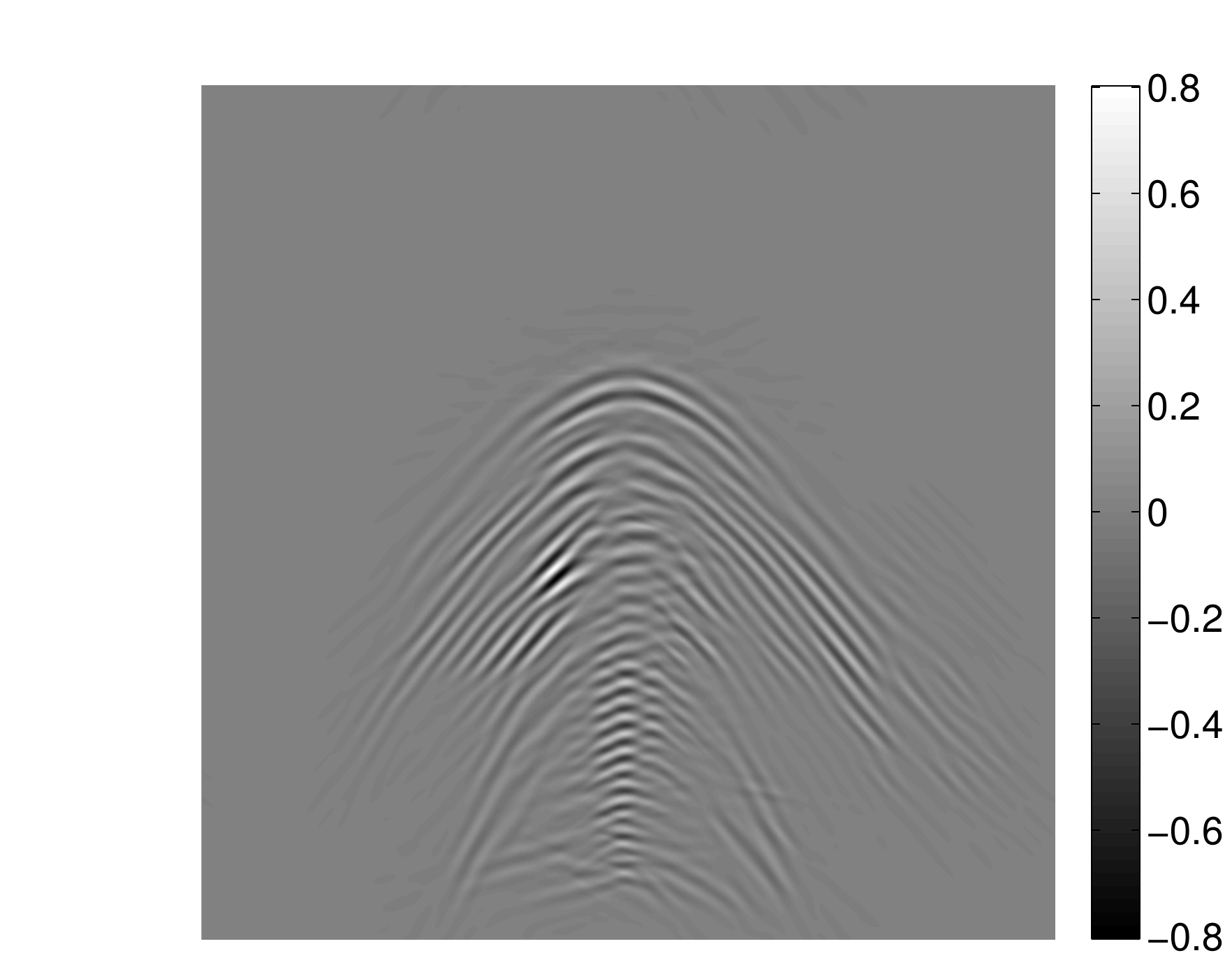}& \includegraphics[height=2.4in]{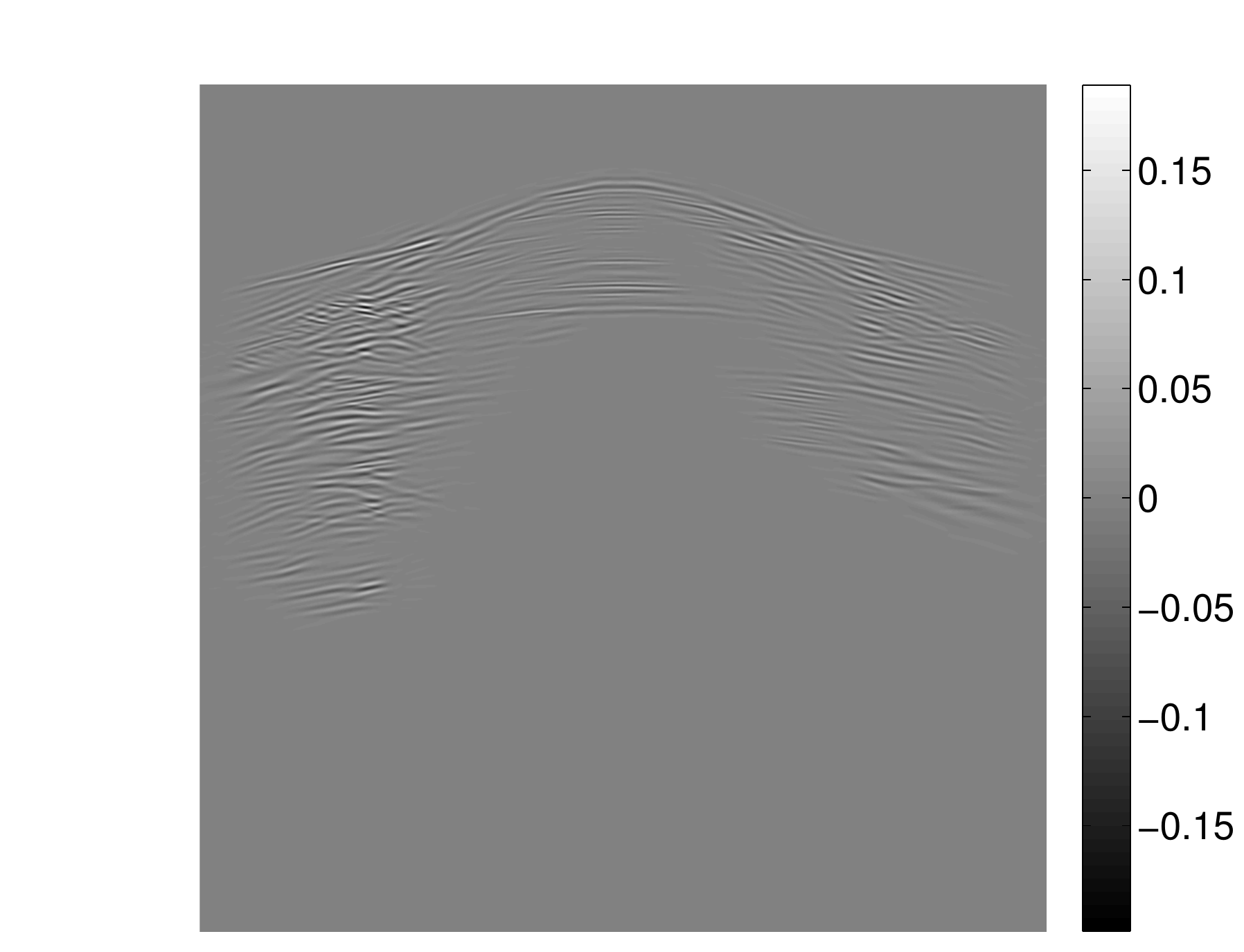}\\
      \includegraphics[height=2.4in]{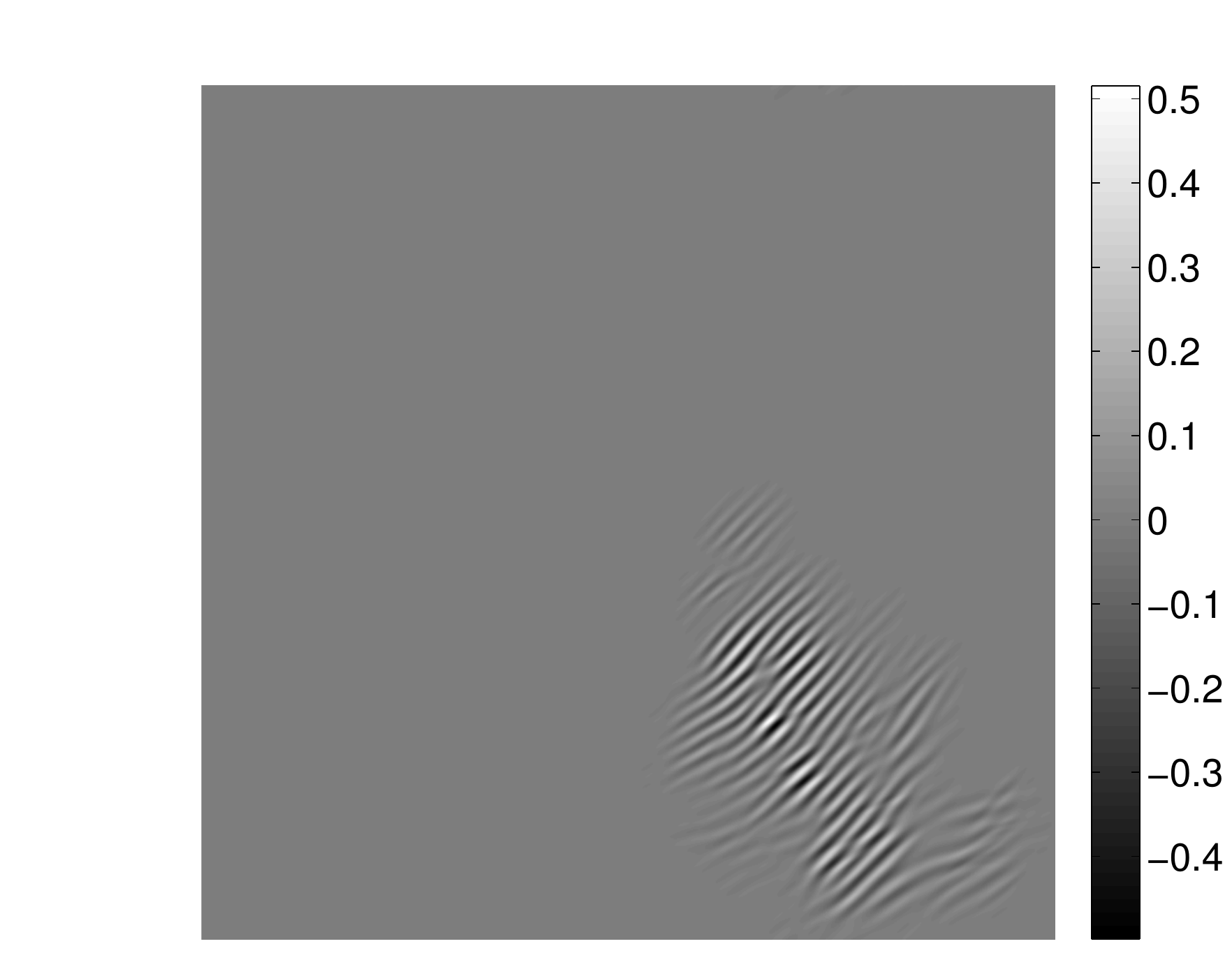}& \includegraphics[height=2.4in]{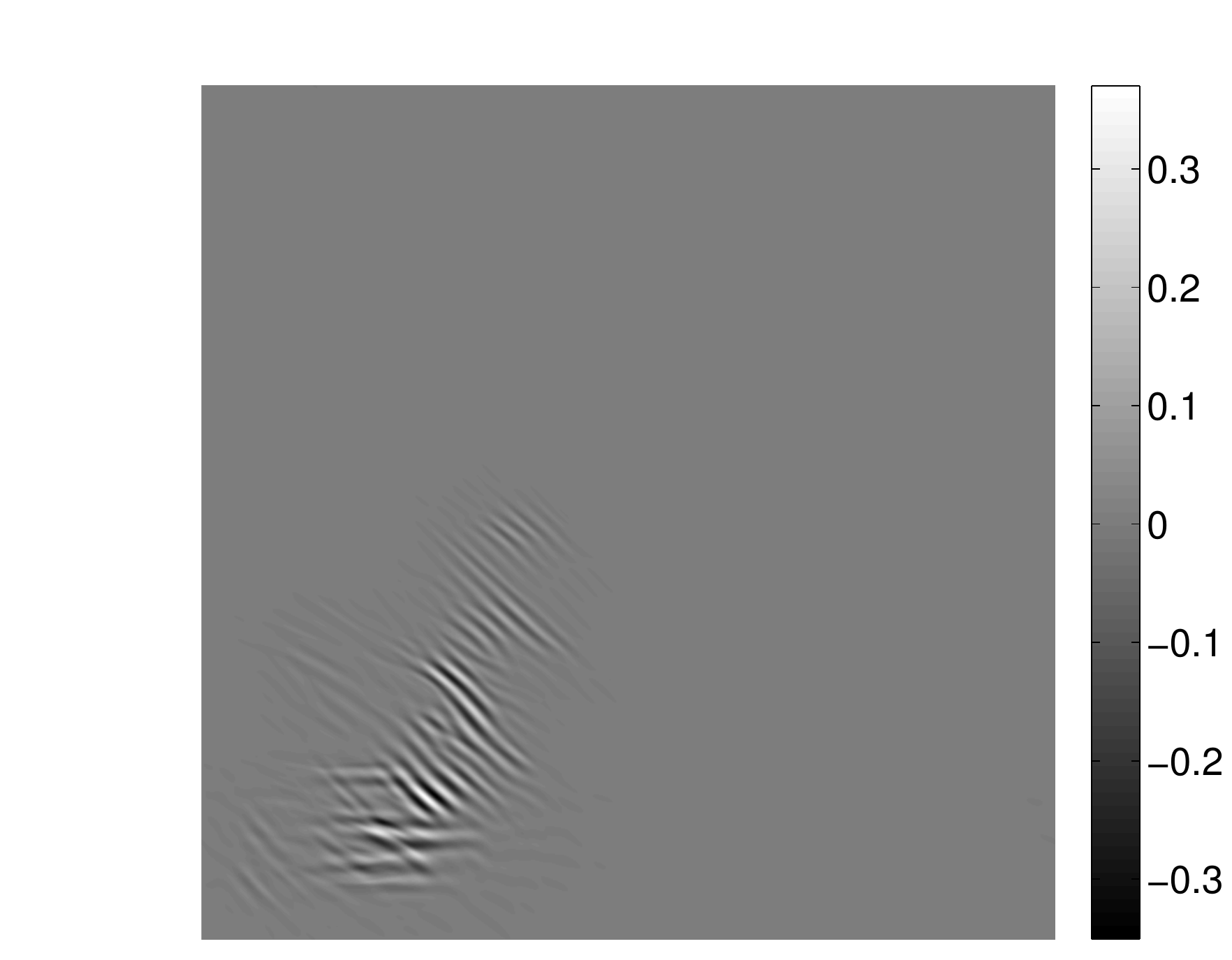}    
    \end{tabular}
  \end{center}
  \caption{Example 4. Top: Real seismic data. Middle and bottom:  Relevant recovered modes .}
  \label{fig:EX4}
\end{figure}

\section{Discussion}
\label{sec:diss}
This paper has proposed the synchrosqueezed curvelet transform as an optimal tool to analyze a superposition of high dimensional banded wave-like components. It serves as an example of applying a properly designed synchrosqueezing method to a superposition of components with specific structures for mode decompositions.

An appealing research direction is to study other type of data structures and other type of superpositions. In \cite{SSWPT} and this article, the data is assumed to be a superposition of wave-like components. In more general circumstances, the oscillatory pattern should not be restricted to wave functions.

Another promising direction would be the optimization scheme for 2D mode decomposition. Hard thresholding can cause some energy loss while reducing the noise. In other cases, some part of the data is missing or has extremely weak energy. One would desire a fast optimization scheme to estimate a clear structure of each component, even if there is missing data or severe noise.

Like the synchrosqueezed wave packet transform, the current approach can be easily extended to 3D or higher dimensions. This direction should be relevant for applications.

{\bf Acknowledgments.} H.Y. was partially supported by NSF grant
CDI-1027952. L.Y. was partially supported by NSF grants CAREER
DMS-0846501, DMS-1027952, and CDI-1027952.  H.Y.and L.Y. thank Jianfeng
Lu for discussion, Sergey Fomel and Jingwei Hu for providing seismic application.

\bibliographystyle{abbrv}
\bibliography{ref}

\begin{thebibliography}{10}

\bibitem{Auger1995}
F.~Auger and P.~Flandrin.
\newblock Improving the readability of time-frequency and time-scale
  representations by the reassignment method.
\newblock {\em Signal Processing, IEEE Transactions on}, 43(5):1068 --1089,
  1995.

\bibitem{Averbuch06fastand}
A.~Averbuch, R.~R. Coifman, D.~L. Donoho, M.~Elad, and M.~Israeli.
\newblock Fast and accurate polar fourier transform.
\newblock 2006.

\bibitem{Averbuch_aframework}
A.~Averbuch, R.~R. Coifman, D.~L. Donoho, M.~Israeli, Y.~Shkolnisky, and
  I.~Sedelnikov.
\newblock A framework for discrete integral transformations i ¨c the
  pseudo-polar fourier transform.

\bibitem{Candes2006}
E.~Cand{\`e}s, L.~Demanet, D.~Donoho, and L.~Ying.
\newblock Fast discrete curvelet transforms.
\newblock {\em Multiscale Model. Simul.}, 5(3):861--899, 2006.

\bibitem{Candes2004}
E.~J. Cand{\`e}s and D.~L. Donoho.
\newblock New tight frames of curvelets and optimal representations of objects
  with piecewise {$C^2$} singularities.
\newblock {\em Comm. Pure Appl. Math.}, 57(2):219--266, 2004.

\bibitem{CandesII}
E.~J. Cand{\`e}s and D.~L. Donoho.
\newblock Continuous curvelet transform. {II}. {D}iscretization and frames.
\newblock {\em Appl. Comput. Harmon. Anal.}, 19(2):198--222, 2005.

\bibitem{Chassande-Mottin2003}
E.~Chassande-Mottin, F.~Auger, and P.~Flandrin.
\newblock Time-frequency/time-scale reassignment.
\newblock In {\em Wavelets and signal processing}, Appl. Numer. Harmon. Anal.,
  pages 233--267. Birkh\"auser Boston, Boston, MA, 2003.

\bibitem{Chassande-Mottin1997}
E.~Chassande-Mottin, I.~Daubechies, F.~Auger, and P.~Flandrin.
\newblock Differential reassignment.
\newblock {\em Signal Processing Letters, IEEE}, 4(10):293 --294, 1997.

\bibitem{2Dwavelet}
Clausel£¬Marianne, T.~Oberlin, and V.~Perrier.
\newblock The monogenic synchrosqueezed wavelet transform: A tool for the
  decomposition/demodulation of am-fm images.
\newblock {\em arXiv:1211.5082 [math.NA]}, Submitted on 20 Nov 2012.

\bibitem{Daubechies2011}
I.~Daubechies, J.~Lu, and H.-T. Wu.
\newblock Synchrosqueezed wavelet transforms: an empirical mode
  decomposition-like tool.
\newblock {\em Appl. Comput. Harmon. Anal.}, 30(2):243--261, 2011.

\bibitem{Daubechies1996}
I.~Daubechies and S.~Maes.
\newblock A nonlinear squeezing of the continuous wavelet transform based on
  auditory nerve models.
\newblock In {\em Wavelets in Medicine and Biology}, pages 527--546. CRC Press,
  1996.

\bibitem{Demanet2007}
L.~Demanet and L.~Ying.
\newblock Wave atoms and sparsity of oscillatory patterns.
\newblock {\em Appl. Comput. Harmon. Anal.}, 23(3):368--387, 2007.

\bibitem{EWT}
J.~Gillese.
\newblock Empirical wavelet transform.
\newblock {\em IEEE TRANS. ON SIGNAL PROCESSING}, to appear.

\bibitem{2DEWT}
J.~Gillese, G.~Tran, and S.~Osher.
\newblock 2d empirical transforms. wavelets, ridgelets and curvelets revisited.
\newblock {\em submitted}.

\bibitem{Hou2013}
T.~Hou, Z.~Shi, and P.~Tavallali.
\newblock Convergence of a data-driven time-frequency analysis method.
\newblock {\em arXiv:1303.7048 [math.NA]}, 2013.

\bibitem{Hou2011}
T.~Y. Hou and Z.~Shi.
\newblock Adaptive data analysis via sparse time-frequency representation.
\newblock {\em Adv. Adapt. Data Anal.}, 3(1-2):1--28, 2011.

\bibitem{Hou2012}
T.~Y. Hou and Z.~Shi.
\newblock Data-driven time-frequency analysis.
\newblock {\em arXiv:1202.5621v1}, 2012.

\bibitem{Hou2009}
T.~Y. Hou, M.~P. Yan, and Z.~Wu.
\newblock A variant of the {EMD} method for multi-scale data.
\newblock {\em Adv. Adapt. Data Anal.}, 1(4):483--516, 2009.

\bibitem{E1}
N.~E. Huang.
\newblock Computer implemented empirical mode decomposition apparatus, method
  and article of manufacture for two-dimensional signals.
\newblock {\em US Patent 6,311,130 B1, Granted Oct. 30, 2001}.

\bibitem{Huang1998}
N.~E. Huang, Z.~Shen, S.~R. Long, M.~C. Wu, H.~H. Shih, Q.~Zheng, N.-C. Yen,
  C.~C. Tung, and H.~H. Liu.
\newblock The empirical mode decomposition and the {H}ilbert spectrum for
  nonlinear and non-stationary time series analysis.
\newblock {\em R. Soc. Lond. Proc. Ser. A Math. Phys. Eng. Sci.},
  454(1971):903--995, 1998.

\bibitem{Huang2009}
N.~E. Huang, Z.~Wu, S.~R. Long, K.~C. Arnold, X.~Chen, and K.~Blank.
\newblock On instantaneous frequency.
\newblock {\em Adv. Adapt. Data Anal.}, 1(2):177--229, 2009.

\bibitem{Eng2}
W.~Huang, Z.~Shen, N.~E. Huang, and Y.~C. Fung.
\newblock Engineering analysis of biological variables: An example of blood
  pressure over 1 day.
\newblock {\em Proc. Natl. Acad. Sci.}, 95, 1998.

\bibitem{B3}
A.~Linderhed.
\newblock Variable sampling of the empirical mode decomposition of
  twodimensional signals.
\newblock {\em Int. J. Wavelets Multresolution Inform. Process}, 2005.

\bibitem{B4}
A.~Linderhed.
\newblock Image empirical mode decomposition: A new tool for image processing.
\newblock {\em Adv. Adapt. Data Anal}, 2009.

\bibitem{E2}
S.~R. Long.
\newblock Applications of hht in image analysis.
\newblock {\em Hilbert-Huang Transform and Its Applications, eds. N. E. Huang
  and S. S. P. Shen}, 2005.

\bibitem{B1}
J.~C. Nunes, Y.~Bouaoune, E.~Delechelle, O.~Niang, and P.~Bunel.
\newblock Image analysis by bidimensional empirical mode decomposition.
\newblock {\em Image Vision Comput.}, 2003.

\bibitem{B2}
J.~C. Nunes, O.~Niang, Y.~Bouaoune, E.~Delechelle, and P.~Bunel.
\newblock Bidimensional empirical mode decomposition modified for texture
  analysis.
\newblock {\em Image Anal. Proc.}, 2003.

\bibitem{medicine2}
C.~Park, D.~Looney, P.~Kidmose, M.~Ungstrup, and D.~Mandic.
\newblock Time-frequency analysis of eeg asymmetry using bivariate empirical
  mode decomposition.
\newblock {\em IEEE Trans Neural Syst Rehabil Eng}, 2011.

\bibitem{medicine}
D.~J. Pines and L.~W. Salvino.
\newblock Health monitoring of one-dimensional structures using empirical mode
  decomposition and the hilbert-huang transform.
\newblock pages 127--143, 2002.

\bibitem{Candes2002}
J.-L. Starck, E.~J. Cand{\`e}s, and D.~L. Donoho.
\newblock The curvelet transform for image denoising.
\newblock {\em IEEE Trans. Image Process.}, 11(6):670--684, 2002.

\bibitem{Eng1}
A.~D. Veltcheva.
\newblock Wave and group transformation by a hilbert spectrum.
\newblock {\em Coastal Engineering Journal}, 44(4), 2002.

\bibitem{Wu2009}
Z.~Wu and N.~E. Huang.
\newblock Ensemble empirical mode decomposition: a noise-assisted data analysis
  method.
\newblock {\em Advances in Adaptive Data Analysis}, 1(1):1--41, 2009.

\bibitem{MEEMD}
Z.~Wu, N.~E. Huang, and X.~Chen.
\newblock The multi-dimensional ensemble empirical mode decomposition method.
\newblock {\em Adv. Adapt. Data Anal.}, 1(3):339--372, 2009.

\bibitem{SSWPT}
H.~Yang and L.~Ying.
\newblock Synchrosqueezed wave packet transform for 2d mode decomposition.
\newblock Submitted on Sep.12, 2012.

\end{thebibliography}

\end{document}